\theoremstyle{plain}
\newtheorem{thm}{\bf Theorem}[section]
\newtheorem{prop}[thm]{\bf Proposition}
\newtheorem{lemma}[thm]{\bf Lemma}
\newtheorem{cor}[thm]{\bf Corollary}
\theoremstyle{definition}
\theoremstyle{remark}
\newtheorem{remark}[thm]{\bf Remark}
\newtheorem{example}[thm]{\bf Example}
\numberwithin{equation}{section}
\def\rk{\operatorname{rank}}
\def \chara{{\operatorname{char}}}
\def\sat{\operatorname{sat}}
\def \reg{{\operatorname{reg}}}
\def\reg{\operatorname{reg}}
\def\GL{\operatorname{GL}}
\def \mm{{\mathfrak{m}}}
\def \PP{\mathbb P}
\def \RR{\mathbb R}
\def \G{\mathcal G}
\def \R{\mathcal R}
\newcommand{\HH}[3]{\operatorname{H}^{#1}_{#2}(#3)}
\def \P{\mathcal P}
\def \F{\mathcal F}
\def \e{\varepsilon}
\def \D{\Delta}
\def \NN{{\mathbb{N}}}
\def \ZZ{{\mathbb{Z}}}
\def \Quot{{\operatorname{Quot}}}
\def\ls{\leqslant}
\def\gs{\geqslant}
\def\vol{\operatorname{vol}}
\def\diam{\operatorname{diam}}
\def\det{\operatorname{det}}
\def\tr{\operatorname{tr}}
\def\length{\lambda}
\def\iff{\Longleftrightarrow}
\def\fm{\mathfrak{m}}
\newcommand{\ddd}[1]{\:\mathrm{d}{#1}}
\newcommand{\mcal}{\mathcal}
\newcommand{\mdd}[1]{\,(\mathrm{mod}\,\,\, #1)}
\title[Multiplicities of Classical Varieties]{Multiplicities of Classical Varieties}
\author[Jack Jeffries]{Jack Jeffries}
\address{Jack Jeffries \\ Department of Mathematics \\ University of Utah \\ 155 S 1400 E\\
Salt Lake City, UT. 84112}
\email{jeffries@math.utah.edu}
\author[Jonathan Monta\~{n}o]{Jonathan Monta\~{n}o}
\address{Jonathan Monta\~{n}o \\ Department of Mathematics \\ Purdue University \\150 North University Street \\West Lafayette, IN. 47907}
\email{jmontano@purdue.edu}
\author[Matteo Varbaro]{Matteo Varbaro}
\address{Matteo Varbaro \\ Dipartimento di Matematica \\ Universit\`a di Genova \\Via Dodecaneso, 35 \\16146 Genova - ITALY}
\email{varbaro@dima.unige.it}
\begin{document}

\thanks{This material is based upon work supported by the National Science Foundation under Grant No~0932078 000, while the authors were in residence at the Mathematical Science Research Institute in Berkeley, California, during the Fall semester of 2012. The first author was also supported in part by NSF grants DMS~0758474 and DMS~1162585, the second author by NSF grant DMS~0901613, the third author by the Postdoctoral Fellowship offered by the MSRI for the Commutative Algebra program.}

\begin{abstract}
The $j$-multiplicity plays an important role in the intersection theory of St\"uckrad-Vogel cycles, while recent developments confirm the connections between the $\epsilon$-multiplicity and equisingularity theory. In this paper we establish, under some constraints, a relationship between the $j$-multiplicity of an ideal and the degree of its fiber cone. As a consequence, we are able to compute the $j$-multiplicity of all the ideals defining rational normal scrolls. By using the standard monomial theory, we can also compute the $j$- and $\epsilon$-multiplicity of ideals defining determinantal varieties: The found quantities are integrals which, quite surprisingly, are central in random matrix theory.
\end{abstract}

\maketitle

\section{Introduction}

For a long time, the development of the theory of multiplicities in local rings has been supplying essential techniques for the study of local algebra, intersection theory, and singularity theory in algebraic geometry. The Hilbert-Samuel multiplicity is defined for {\it $\mm$-primary} ideals $I$ in a noetherian local or graded ring $(R,\mm)$ as
\begin{align*}
e(I)&=\lim_{s\to \infty}\frac{(d-1)!}{s^{d-1}}\,\lambda({I^s/I^{s+1}})\\
&=\lim_{s\to \infty}\frac{d!}{s^{d}}\,\lambda({R/I^{s+1}})\,.
\end{align*}
Also, the degree $e(R)$ of $R$ is defined as $e(\mm)$.
Classically, the Hilbert-Samuel multiplicity is used to define the intersection numbers for varieties. Other significant applications include Rees' criterion for integral dependence, and depth conditions for the associated graded algebras (\cite{RV,SA1,SA2}). However, the restriction to $\mm$-primary ideals is a strong limit on the applicability of the techniques provided by the Hilbert-Samuel multiplicity.

This led, in the 1993 paper \cite{AM} of Achilles and Manaresi, to the introduction of the $j$-multiplicity $j(I)$ of an ideal $I$ (not necessarily $\mm$-primary) of a noetherian local ring $(R,\mm)$. This multiplicity provides a local algebraic foundation to the intersection theory of St\"uckrad-Vogel cycles. Using $j$-multiplicity, many key results have had generalizations to all ideals including a numerical criterion for integral dependence \cite{FM}, and depth conditions on the associated graded algebra \cite{MX,PX}. 

However, there is still an evident lack of examples of ideals whose $j$-multiplicity is known. The main tool to compute this invariant is a formula interpreting the $j$-multiplicity as the length of a suitable $R$-module, proved in various forms and levels of generality in \cite{AM,FM1,Xie}. Such a length-formula allowed Nishida and Ulrich to compute, in \cite{NU}, the $j$-multiplicity of the ideals defining the rational normal curve of degree 4 in $\PP^4$ and the rational normal scroll $\PP(2,3)\subseteq \PP^7$. There are two major limitations to this approach: the suitable $R$-module is defined in terms of ``sufficiently general'' elements in $I$, a condition which cannot be verified directly and dissolves any extra (e.g., combinatorial) structure that $I$ might have. Also, determining lengths by computational methods such as computing Gr\"obner bases can be quite slow. An exception is monomial ideals of a polynomial ring (or normal affine toric ring) in $d$ variables. In this case the first two authors of this paper could recently show in \cite{JM} that the $j$-multiplicity is the volume of a suitable polytopal complex in $\RR^d$, which is much more amenable to quick computation than the length formula.

Another related multiplicity for an arbitrary ideal is the  $\epsilon$-multiplicity of an ideal $I\subseteq R$, which was first introduced by Ulrich and Validashti in \cite{UV} as a generalization of the Buchsbaum-Rim multiplicity. This multiplicity has close connections to volume of divisors and has found applications in equisingularity theory \cite{KUV}. This invariant exhibits more mysterious behavior than $j$-multiplicity, and little is known about calculating it.

 The main results of the present paper are:
\begin{itemize}
\item[(i)] Theorem~\ref{thm:j=te} (iii): If $I$ is a homogeneous ideal of a standard graded domain $A$, such that $I$ has maximal analytic spread ($=\dim(A)$), is generated in a single degree $t$ and all its $s^{\text{th}}$ powers are saturated in degrees $\gs st$, then $j(I)$ is equal to the degree of the fiber cone of $I$ multiplied by $t$.
\item[(ii)] Theorem~\ref{j-scrolls}: A closed formula for the $j$-multiplicity of the ideals defining any rational normal scroll $V$, depending only on the dimension and the codimension of $V$.
\item[(iii)] Theorem~\ref{thm_generic}: A computation of the $j$-multiplicity of $I_t(X)$, the $\epsilon$-multiplicity of $I_t(X)$, and the degree of the fiber cone of $I_t(X)$, where $I_t(X)$ denotes the ideal generated by the $t$-minors of a generic $m\times n$-matrix $X$. To give an idea, $j(I_t(X))$ is the integral of a symmetric polynomial over a hypersimplex in $\RR^{\min\{m,n\}}$. Surprisingly, such a quantity has a precise meaning in random matrix theory. Similar results are provided for minors (resp. pfaffians) of generic symmetric (resp. skew-symmetric) matrices, see Theorems~\ref{thm_symmetric} and~\ref{thm_pfaffian}.
\end{itemize}

\vspace{2mm}

Given an ideal $I$ in a $d$-dimensional noetherian local ring $(R,\mm)$, its {\it $j$-multiplicity} is defined as:
\[j(I)=\lim_{s\to \infty}\frac{(d-1)!}{s^{d-1}}\,\lambda(\HH{0}{\mm}{I^s/I^{s+1}})\,.\]
This number is a nonnegative integer that agrees with the Hilbert-Samuel multiplicity when $I$ is $\mm$-primary (in which case $I^s/I^{s+1}$ is already $\mm$-torsion). However one can show that $j(I)\neq 0$ if and only if the analytic spread of $I$ is maximal, and this is the case in a much larger variety of situations than the $\mm$-primary case. The {\it $\epsilon$-multiplicity} of an ideal $I\subseteq R$ is defined as
\[\epsilon(I)=\lim_{s\to \infty}\frac{d!}{s^{d}}\,\lambda(\HH{0}{\mm}{R/I^{s+1}})\,.\]
The $\epsilon$-multiplicity has recently been shown by Cutkosky \cite{Cut2} to exist for any ideal in an analytically unramified ring; however, there are examples, e.g., \cite{CHST}, in which the $\epsilon$-multiplicity is irrational. Of course, these two multiplicities can be defined for any homogeneous ideal $I$ in a graded ring $R=\bigoplus_{k\gs 0} R_k$, where $(R_0,\mm_0)$ is local and the role of $\mm$ is played by $\mm_0\oplus (\bigoplus_{k>0}R_k)$ (in general we have to define the $\epsilon$-multiplicity as a $\limsup$). 

After a preliminary section recalling the basic facts about these multiplicities, in Section \ref{sec:j=te} we prove Theorem \ref{thm:j=te}. Firstly we show that, if $I$ is an ideal with maximal analytic spread, generated in a single degree $t$, of a standard graded domain $A$, then $j(I)=k\cdot e(\F(I))$ for some integer $k\gs t$, where $\F(I)$ denotes the fiber cone of $I$. In general $k$ may be bigger than $t$, however we prove that, if 
\begin{equation}\label{intro1}\tag{\dag}
[(I^s)^{\sat}]_r=[I^s]_r \mbox{ for all $s\gg 0$ and $r\gs st$},
\end{equation}
then $j(I)=t\cdot e(\F(I))$. Property \eqref{intro1} is rather strong; however, there are interesting classes of ideals satisfying it. For example, by using a result in \cite{BCV}, ideals defining rational normal scrolls satisfy \eqref{intro1}. Furthermore, the degree of the fiber cone of such ideals can be computed combining results in \cite{CHV} and \cite{BCV}, so we are able to compute, in Theorem \ref{j-scrolls}, $j(I)$ for every ideal defining a rational normal scroll.

Other ideals satisfying \eqref{intro1} are ideals generated by the $t$-minors $I_t(X)$ of the $m\times n$ generic matrix $X$. Theorem~\ref{thm:j=te} applies, so one could deduce $j(I_t(X))$ from $e(\F(I_t(X)))$. The problem is that $\F(I_t(X))$ is the algebra of minors, denoted by $A_t$ in \cite{BC}, which is a quite subtle algebra; for example, to describe its defining equations is a very difficult open problem (see \cite{BCV1}). Also, $e(A_t)$ is unknown (problem (b) at the end of the introduction in \cite{BC}), therefore Theorem~\ref{thm:j=te} is not decisive at this time (apart from special cases --- for example if $t=m-1=n-1$ then $A_t$ is a polynomial ring, so $j(I_t(X))=t$ in this case). We note that the analytic spread of such ideals is maximal if and only if $t<\min\{m,n\}$: see Remark~\ref{analytic_spread} for details.

We therefore aim to compute simultaneously the $j$-multiplicity and the $\epsilon$-multiplicity of $I_t(X)$, exploiting the standard monomial theory and the description of the primary decomposition of the (integral closure of the) powers of $I_t(X)$. These facts are gathered in Section~\ref{sec:standardmon}, where the analog statements concerning symmetric and skew-symmetric matrices are also treated. In Section~\ref{sec:count}, by capitalizing on the hook length formula, we express the number of standard tableaux in a fixed alphabet of a given shape (Young diagram) $(a_1,\ldots ,a_p)$ as a polynomial function in $r_i=|\{j:a_j=i\}|$ (see Proposition~\ref{W-prop}). This allows us to express $j(I_t(X))$ as an integral of a polynomial over a hypersimplex (which is a well studied polytope, see for example \cite{Sta2}). Precisely (assuming that $m\ls n$):
\begin{equation}\label{intro2}
j(I_t(X))=ct \!\int\limits_{\substack{[0,1]^m \\ \sum{z_i}= t}}(z_1\cdots z_{m})^{n-m}\prod_{1\ls i < j\ls m}(z_j-z_i)^2\ddd{\nu}\,, 
\end{equation}
where $c=\frac{(nm-1)!}{(n-1)!\cdots (n-m)!m!\cdots 1!}$ (see Theorem~\ref{thm_generic} (i)). Analogous formulas are obtained for the $\epsilon$-multiplicity and for the degree of the algebra of minors (Theorem~\ref{thm_generic} (ii) and (iii)). 

The integral in \eqref{intro2} is surprisingly related to quantities in random matrix theory, as explained in Section \ref{sec:integral}. When ${t=1}$, the exact value of the integral in \eqref{intro2} is known by unpublished work of Selberg (see \cite{FW} for a detailed account). However, since $I_1(X)$ is the irrelevant ideal, we knew a priori that ${j(I_1(X))=1}$.  On the other hand, if ${t>1}$, at the moment a closed formula for the exact value of the integral in \eqref{intro2} is unknown; at least, however, it is possible to get a $N$-variate power series whose coefficients are the values of the integrals over a simplex of the products of $N$ fixed linear forms (see \cite{BBDKV}). The integrand in \eqref{intro2} is a product of linear forms, and the region over which we integrate is a hypersimplex, which has many well-known triangulations (for example see \cite{Sta2}, \cite{Stuu}). We can therefore apply the method of \cite{BBDKV}. When $t=m-1$ our region is already a simplex, so in this case the situation is simpler. These aspects are discussed in Section \ref{sec:integral}. 

The analogs of the above results are also reported for $t$-minors of a generic symmetric matrix and $2t$-pfaffians of a generic skew-symmetric matrix.

\section{Preliminaries}

In this section $R$ will be a noetherian local ring and $\mm$ its unique maximal ideal, or an $\NN$-graded noetherian ring $\bigoplus_{i\gs 0}R_i$ with $(R_0,\mm_0)$ local and ${\mm=\mm_0\oplus(\bigoplus_{i>0}R_i)}$. Given an ideal $I\subseteq R$ (homogeneous if $R$ is graded), we will use throughout the following notation:
\begin{itemize}
\item[(i)] $\R(I)$ $=$ $\bigoplus_{s\gs 0}I^s$, the {\it Rees ring};
\item[(ii)] $\G(I)$ $=$ $\bigoplus_{s\gs 0}I^s/I^{s+1}=\R(I)/I\R(I)$, the {\it associated graded ring};
\item[(iii)] $\F(I)$ $=$  $\bigoplus_{s\gs 0}I^s/\mm I^s=\R(I)/\mm \R(I)=\G(I)/\mm \G(I)$, the {\it fiber cone}.
\end{itemize} 
The $j$-multiplicity of $I$ can also be computed using the normal filtration $\{\overline{I^s}\}_{s\gs 0}$, which we will use to deal with the cases in which $R$ has an exceptional characteristic with respect to the ideal $I$. For a proof of this statement, we will use the notion of $j$-multiplicity of an $R$-module $M$ (in the graded setting).

The $j$-multiplicity of $M$ with respect to $I$ is defined as 
\[j(I,M)=\displaystyle\lim_{s\rightarrow \infty} \frac{(d-1)!}{s^{d-1}}\,\lambda\big(\HH{0}{\mm}{I^sM/I^{s+1}M}\big)\]
 and it is additive on short exact sequences; i.e., if 
\[0\rightarrow M'\rightarrow M\rightarrow M''\rightarrow 0\,,\] 
then $j(I,M)=j(I,M')+j(I,M'')$, see for example \cite[Theorem~3.11]{NU}.

\begin{prop}\label{j-prop}
If $R$ is analytically unramified and $\dim(R)>0$, then
\begin{itemize}
\vspace{.1 cm}
\item[(a)] $\displaystyle j(I)=\displaystyle\lim_{s\rightarrow \infty} \frac{(d-1)!}{s^{d-1}}\,\lambda\Big(\big((\overline{I^{s+1}})^{\sat} \cap \overline{I^s}\,\big) / \overline{I^{s+1}}\,\Big)\,$ and \\ \vspace{.1 cm}
\item[(b)] $\displaystyle \e(I)=\displaystyle\lim_{s\rightarrow \infty} \frac{d!}{s^{d}}\,\lambda\Big((\overline{I^{s+1}})^{\sat}\, / \overline{I^{s+1}}\,\Big)\,.$
\end{itemize}
\vspace{.1 cm}
\end{prop}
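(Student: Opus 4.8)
The plan is to recognize the right-hand sides of (a) and (b) as the $j$- and $\epsilon$-multiplicities computed from the \emph{normal} filtration $\{\overline{I^s}\}$, and then to show that passing from the $I$-adic filtration to the normal filtration changes neither number. The engine is Rees's theorem: since $R$ is analytically unramified, the normal Rees algebra $\overline{\R}(I)=\bigoplus_{s\ge 0}\overline{I^s}$ is a finite module over $\R(I)=\bigoplus_{s\ge 0}I^s$. Finite generation means $\overline{\R}(I)$ is generated over $\R(I)$ in degrees $\le s_0$ for some $s_0$, which forces $\overline{I^{s+1}}=I\,\overline{I^s}$ for all $s\ge s_0$; that is, the normal filtration is eventually a stable $I$-filtration. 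Fixing such an $s_0$ and setting $M:=\overline{I^{s_0}}\subseteq R$, we obtain $\overline{I^s}=I^{\,s-s_0}M$ for every $s\ge s_0$.

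For part (a), first note the identity $\big((\overline{I^{s+1}})^{\sat}\cap\overline{I^s}\big)/\overline{I^{s+1}}=\HH{0}{\mm}{\overline{I^s}/\overline{I^{s+1}}}$, exactly parallel to $\HH{0}{\mm}{I^s/I^{s+1}}=\big((I^{s+1})^{\sat}\cap I^s\big)/I^{s+1}$ in the definition of $j(I)$. Substituting $\overline{I^s}=I^{\,s-s_0}M$ turns $\HH{0}{\mm}{\overline{I^s}/\overline{I^{s+1}}}$ into $\HH{0}{\mm}{I^{\,s-s_0}M/I^{\,s-s_0+1}M}$, and since $(s-s_0)^{d-1}\sim s^{d-1}$ the right-hand side of (a) is precisely $j(I,M)$. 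I would then invoke additivity of $j(I,-)$ on the short exact sequence $0\to M\to R\to R/M\to 0$ to get $j(I)=j(I,R)=j(I,M)+j(I,R/M)$, and it remains to check $j(I,R/M)=0$. In the case of interest (a domain with $I\neq 0$) this is immediate: $R/M=R/\overline{I^{s_0}}$ has dimension $\dim R/\sqrt I<d$, and $j(I,N)=0$ whenever $\dim N<d$ because $\bigoplus_s\HH{0}{\mm}{I^sN/I^{s+1}N}=\HH{0}{\mm\R(I)}{\G(I,N)}$ is a finitely generated graded module of dimension $\le\dim N<d$, so its Hilbert function is $o(s^{d-1})$. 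In general the same conclusion holds because $I$ is nilpotent on every $d$-dimensional component of $\Supp(R/\overline{I^{s_0}})$, so such components contribute nothing; one formalizes this with the associativity formula for the $j$-multiplicity. This settles (a).

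For part (b) the same substitution shows that the right-hand side equals $\lim_{k\to\infty}\frac{d!}{k^{d}}\lambda\big(\HH{0}{\mm}{R/I^kM}\big)$, where $\{I^kM\}_k$ is a stable $I$-filtration sandwiched as $I^{\,k+s_0}\subseteq I^kM\subseteq I^k$. To compare this with $\epsilon(I)=\lim\frac{d!}{s^d}\lambda\big(\HH{0}{\mm}{R/I^s}\big)$ I would use the short exact sequence $0\to I^k/I^kM\to R/I^kM\to R/I^k\to 0$ and the long exact sequence for $\HH{\bullet}{\mm}{-}$. This expresses the difference of the two length functions through $\lambda\big(\HH{0}{\mm}{D_k}\big)$ and a subobject of $\HH{1}{\mm}{D_k}$, where $D_k=I^k/I^kM$ is the degree-$k$ piece of the finitely generated graded $\R(I)$-module $\R(I)/M\R(I)$. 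Since this module is $\mm\R(I)$-adically supported in $V(\mm\R(I))$, whose dimension is the analytic spread $\ell(I)\le d$, the module $\bigoplus_k\HH{0}{\mm}{D_k}=\HH{0}{\mm\R(I)}{\R(I)/M\R(I)}$ is finitely generated of dimension $\le d$ with finite-length graded pieces, so $\lambda\big(\HH{0}{\mm}{D_k}\big)=O(k^{d-1})=o(k^{d})$ and does not affect the $k^d$-coefficient.

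The main obstacle is the $\HH{1}{\mm}{-}$ error term in (b): unlike $\HH{0}{\mm}{-}$, the graded module $\bigoplus_k\HH{1}{\mm}{D_k}$ need not be finitely generated over $\R(I)$, so its growth is not governed by a naive dimension count — this is exactly the phenomenon behind the delicacy (and, in the analytically ramified setting, possible irrationality) of the $\epsilon$-multiplicity. Here I would lean on the analytically unramified hypothesis together with Cutkosky's theorem \cite{Cut2}: it guarantees that both $\epsilon$-limits exist and that the relevant local-cohomology lengths grow subpolynomially of degree $<d$, so the $\HH{1}{\mm}{-}$ contribution is negligible in the limit and the two $\epsilon$-multiplicities coincide. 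By contrast, part (a) sidesteps this difficulty entirely through the additivity of $j(I,-)$, so the real weight of the argument sits in (b).
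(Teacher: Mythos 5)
Your part (a) is correct and is essentially the paper's own argument: both reduce, via the finiteness result $\overline{I^{s+l}}=I^s\,\overline{I^l}$ (Swanson--Huneke), to additivity of $j(I,-)$ on $0\to\overline{I^l}\to R\to R/\overline{I^l}\to 0$. One simplification you missed: $j(I,R/\overline{I^l})=0$ needs no dimension count, associativity formula, or domain hypothesis, because $I^l\subseteq\overline{I^l}$ makes $I$ nilpotent on $R/\overline{I^l}$, so $I^s(R/\overline{I^l})=0$ for $s\gs l$ and the relevant lengths vanish identically.

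Part (b), however, has a genuine gap exactly where you flagged it, and your proposed repair does not work. From the exact sequence $0\to \HH{0}{\mm}{D_k}\to \HH{0}{\mm}{R/I^kM}\to \HH{0}{\mm}{R/I^k}\to \HH{1}{\mm}{D_k}$, the image $C_k$ of the connecting map satisfies $\lambda(C_k)=\lambda\big(\HH{0}{\mm}{R/I^k}\big)-\lambda\big(\HH{0}{\mm}{R/I^kM}\big)+\lambda\big(\HH{0}{\mm}{D_k}\big)$. Since you have already shown $\lambda\big(\HH{0}{\mm}{D_k}\big)=o(k^d)$, the claim ``$\lambda(C_k)=o(k^d)$'' is \emph{equivalent} to the equality of the two $\e$-limits you are trying to prove: the argument is circular. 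Cutkosky's theorem cannot break this circle; \cite[Corollary~6.3]{Cut2} asserts only that limits of the form $\lim_k \lambda\big(\HH{0}{\mm}{R/I_k}\big)/k^d$ exist for suitable graded families in analytically unramified rings. It says nothing about connecting homomorphisms or $\HH{1}{\mm}{-}$, and it certainly does not assert that ``the relevant local-cohomology lengths grow subpolynomially of degree $<d$'' --- indeed $\lambda\big(\HH{0}{\mm}{R/I^k}\big)$ itself grows like $\e(I)k^d/d!$, which is typically nonzero. Two limits existing does not make them equal.

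The paper's proof shows how to avoid $\HH{1}{\mm}{-}$ altogether: use the two-sided sandwich $I^s\subseteq\overline{I^s}\subseteq I^{s-l}$ (the right-hand inclusion because $\overline{I^s}=I^{s-l}\,\overline{I^l}\subseteq I^{s-l}$), and apply subadditivity of $\lambda\big(\HH{0}{\mm}{-}\big)$ to the \emph{two} short exact sequences $0\to\overline{I^s}/I^s\to R/I^s\to R/\overline{I^s}\to 0$ and $0\to I^{s-l}/\overline{I^s}\to R/\overline{I^s}\to R/I^{s-l}\to 0$. This bounds the normal-filtration length from below by the $I$-adic length at stage $s$ and from above by the $I$-adic length at stage $s-l$, with error terms $\lambda\big(\HH{0}{\mm}{\overline{I^s}/I^s}\big)$ and $\lambda\big(\HH{0}{\mm}{I^{s-l}/\overline{I^s}}\big)$ that are $O(s^{d-1})$: the first is filtered by the modules $\HH{0}{\mm}{I^j/I^{j+1}}$ appearing in the definition of $j(I)$, the second (after enlarging $I^{s-l}$ to $\overline{I^{s-l}}$) by the modules controlled by part (a). This traps $\liminf$ and $\limsup$ of the normal quantity between $\liminf$ and $\limsup$ of the $I$-adic one, and only then is Cutkosky invoked --- solely to know that $\e(I)$ exists as a limit, which collapses the whole sandwich into equalities. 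Your one-sided inequality (that the normal $\limsup$ is at most $\e(I)$) is fine; replace the $\HH{1}{\mm}{-}$ step by the second short exact sequence above and your argument becomes the paper's.
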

\begin{proof} By \cite[Corollary~9.2.1]{SH} there exists an integer $\l\gs 1$ such that ${\overline{I^{n+l}}}$ is equal to $I^n\overline{I^l}$ for every $n\gs 0$.\\
(a)\   Here, the conclusion follows from the additivity of $j$-multiplicity in the short exact sequence 
\[0\rightarrow \overline{I^l}\rightarrow R\rightarrow R/\overline{I^l}\rightarrow 0\,,\]
 and by noting that $j(I,R/\overline{I^l})=0.$ 
 
(b) \ This argument is similar to the one in \cite[Theorem~5.1~part~(b)]{JM}.
We have the following two short exact sequences for every $s\gs l$:
\[0\rightarrow \overline{I^s}/I^s \rightarrow R/I^s \rightarrow R/\overline{I^s} \rightarrow 0 \,,\]
\[0\rightarrow I^{s-l}/\overline{I^{s}} \rightarrow R/\overline{I^{s}} \rightarrow R/I^{s-l} \rightarrow 0\,. \]
Since $\lambda_R\big(\HH{0}{\fm}{-}\big)$ is subadditive on short exact sequences, we obtain the following inequalities
\begin{equation}\label{ineq}
\begin{aligned}
 \lambda_R\big(&\HH{0}{\fm}{R/I^{s}}\big) \ls \lambda_R\big(\HH{0}{\fm}{\overline{I^s}/I^{s}}\big) \!+\! \lambda_R\big(\HH{0}{\fm}{R/\overline{I^s}}\big) \\
&\ls \lambda_R\big(\HH{0}{\fm}{\overline{I^s}/I^{s}}\big) \!+\! \lambda_R\big(\HH{0}{\fm}{R/I^{s-l}}\big)\!+\!\lambda_R\big(\HH{0}{\fm}{I^{s-l}/\overline{I^{s}}}\big).
\end{aligned}
\end{equation}
Observe that 
\[\lambda_R\big(\HH{0}{\fm}{\overline{I^s}/I^s}\big)\ls \lambda_R\big(\HH{0}{\fm}{I^{s-l}/I^s}\big)\ls \sum_{i=0}^{l-1}\lambda_R\big(\HH{0}{\fm}{I^{s-l+i}/I^{s-l+i+1}}\big)\,.\]
It follows that
\[
\limsup_{s\rightarrow \infty}\frac{d!}{s^d}\,\lambda_R\big(\HH{0}{\fm}{\overline{I^s}/I^s}\big)\ls \sum_{i=0}^{l-1}\limsup_{s\rightarrow \infty} \frac{d!}{s^d}\,\lambda_R\big(\HH{0}{\fm}{I^{s-l+i}/I^{s-l+i+1}}\big)\big)=0\,,\]
where the last equality holds by the definition of $j$-multiplicity.
Therefore $\lim _{n\rightarrow \infty}\frac{d!}{s^d}\lambda_R\big(\HH{0}{\fm}{\overline{I^s}/I^s}\big)=0$. Similarly we can conclude using part (a) that $\lim _{n\rightarrow \infty}\frac{d!}{s^d}\lambda_R\big(\HH{0}{\fm}{I^{s-l}/\overline{I^{s}}}\big)=0$.
\noindent Using these two limits in the two inequalities of $\eqref{ineq}$, we obtain
\begin{equation}\label{inf_sup}\begin{aligned}
\liminf_{s\rightarrow \infty}\frac{d!}{s^d}\,\lambda_R\big(\HH{0}{\fm}{R/I^{s}}\big)
&\ls \liminf_{s\rightarrow \infty}\frac{d!}{s^d}\,\lambda_R\big(\HH{0}{\fm}{R/\overline{I^s}}\big)\\
 &\ls \limsup_{s\rightarrow \infty}\frac{d!}{s^d}\,\lambda_R\big(\HH{0}{\fm}{R/\overline{I^s}}\big)\\
 &\ls \limsup_{s\rightarrow \infty}\frac{d!}{s^d}\,\lambda_R\big(\HH{0}{\fm}{R/I^s}\big)
\end{aligned}\end{equation}
Then, by \cite[Corollary~6.3]{Cut2},  the first $\liminf$ and last $\limsup$ in \eqref{inf_sup} agree (that is, the $\e$-multiplicity of $I$ exists as a limit), so equality holds throughout. 
\end{proof}
The following lemma will be crucial to show the results in Section \ref{section_main}.

\begin{lemma}[Integration Lemma]\label{integration} Let $f$ be a polynomial in $\RR[s,x_1,\dots,x_d]$ of degree $e$ and write
\[ f(s,x) = f_{e} (s,x) + f_{e-1} (s,x) + \dots + f_0 (s,x) \]
as a sum of homogeneous forms. Fix a positive integer $k$, a $d\times k$ matrix with real entries $(a_{ij})_{ij}$, a vector $(b_i)_i\in \RR^d$, two vectors $(p_j)_j, (q_j)_j\in\ZZ^k$, and three natural numbers $r<t<m$.
For any $s\in \NN$ let 
\[G(s)=\big\{x\in \ZZ^d \,\big|\, \forall j=1,\ldots ,k,\, \sum a_{ij} x_i \ls p_j s + q_j, \sum b_{i} x_i \equiv t s + r \,\mdd{m}\big\}\,. \]
Then, denoting by $\mcal{P}$ the region in $\RR^d$ given by $\sum a_{ij} x_i \ls p_j$ for $j=1,\ldots ,k$,
\[\lim_{s\rightarrow \infty} \frac{m}{s^{d+e}}\sum_{x\in G(s)} f(s,x_1,\dots,x_d) = \int_\mcal{P} f_{e}(1,x_1,\dots,x_d)\ddd x \,.\]
\end{lemma}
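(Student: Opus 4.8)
The plan is to prove this as a lattice-point asymptotic of Riemann-sum type: the factor $s^{d+e}$ reflects that summing a degree-$e$ form over the $\sim s^d$ lattice points of a dilated polytope produces growth of order $s^{d+e}$, while the prefactor $m$ compensates for the mod-$m$ congruence, which thins out $\ZZ^d$ to density $1/m$. Throughout I assume (as is implicit for the sum to be finite) that $\mcal{P}$ is bounded, hence a polytope; then for each $s$ the region $\{x:\sum_i a_{ij}x_i\ls p_js+q_j\}$ is a bounded polytope close to the dilate $s\mcal{P}$.

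First I would reduce to the top-degree form. Writing $y=x/s$ and using that each $f_i$ is homogeneous of degree $i$ in the $d+1$ variables $(s,x_1,\dots,x_d)$, one has $f_i(s,x)=s^i f_i(1,x/s)$, so $f(s,x)=\sum_{i=0}^e s^i f_i(1,x/s)$. Since $f_i(1,\cdot)$ is bounded on the compact set $\mcal{P}$ and $\#G(s)=O(s^d)$, the contribution of each $f_i$ with $i<e$ is $O(s^{d+i})=o(s^{d+e})$ and disappears after dividing by $s^{d+e}$. Thus only $s^e f_e(1,x/s)$ survives, and it remains to prove
\[\lim_{s\to\infty}\frac{m}{s^d}\sum_{x\in G(s)} f_e(1,x/s)=\int_{\mcal{P}} f_e(1,y)\ddd y\,.\]

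Next comes the Riemann sum. Ignoring the congruence momentarily, the points $x\in\ZZ^d$ with $\sum_i a_{ij}x_i\ls p_js+q_j$, rescaled by $1/s$, form the lattice $\tfrac1s\ZZ^d$ intersected with a region converging to $\mcal{P}$ (the bounded shifts $q_j/s\to0$ move the facets by $O(1/s)$, altering only $O(s^{d-1})$ points). Each rescaled point is the corner of a cube of volume $s^{-d}$, so $s^{-d}\sum f_e(1,x/s)$ is a Riemann sum converging to $\int_{\mcal{P}} f_e(1,y)\ddd y$; the boundary and shift errors are $O(s^{d-1})=o(s^d)$ and are harmless since $f_e(1,\cdot)$ is continuous and $\mcal{P}$ is a polytope. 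Imposing the congruence $\sum_i b_i x_i\equiv ts+r\mdd{m}$ restricts $x$ to a union of residue classes modulo the sublattice $\{x:\sum_i b_ix_i\equiv 0\mdd{m}\}$ of density $1/m$; equidistribution of lattice points among these classes (each valid class meets $s\mcal{P}$ in $\sim s^d\vol(\mcal{P})/m$ points, contributing a Riemann sum equal to $\tfrac1m$ times $\int_{\mcal{P}} f_e(1,y)\ddd y$) shows the congruence multiplies the limit by exactly $1/m$, which the prefactor $m$ cancels. Combining the three steps gives the claim.

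I expect the \emph{main obstacle} to be making the equidistribution step rigorous and uniform: one must check that the congruence selects asymptotically the fraction $1/m$ of lattice points weighted by $f_e(1,x/s)$, uniformly as the target residue $ts+r$ varies with $s$. The cleanest route is to sum over each valid residue class separately and recognize it as a Riemann sum over a shifted, coarsened lattice of spacing $m/s$. Here one genuinely needs $\gcd(b_1,\dots,b_d,m)=1$, so that every target residue is realized by exactly $m^{d-1}$ classes and the density stays $1/m$; without this the limit can fail to exist (e.g.\ it may oscillate with the residue of $s$), so this hypothesis, implicit in the applications, is the delicate point. By contrast, the negligibility of the lower-order forms and of the polytope boundary and shift corrections is routine once $\mcal{P}$ is known to be bounded.
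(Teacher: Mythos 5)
Your proof is correct and is essentially the paper's own argument: both reduce to the top-degree form and interpret the congruence-constrained sum as a Riemann sum with respect to a lattice of density $1/m$, the only (cosmetic) difference being that the paper tiles $\mcal{P}$ by $\frac{1}{s}$-scaled fundamental domains of the lattice $L=\{x\in\ZZ^d:\sum b_ix_i\equiv 0\mdd{m}\}$, treating $G(s)$ as a single coset of $L$, whereas you sum over the $m^{d-1}$ cosets of the coarser lattice $m\ZZ^d$. Your remark that $\gcd(b_1,\dots,b_d,m)=1$ is genuinely needed is accurate and identifies a hypothesis the paper leaves implicit: its proof silently assumes $\vol(\Delta)=m$, i.e.\ that $L$ has index $m$ in $\ZZ^d$, when it replaces $m/s^d$ by $\vol\big(\tfrac{1}{s}\Delta\big)$, and this holds precisely when that gcd equals $1$ --- as it does in all of the paper's applications.
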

\begin{proof} 
Let $\Delta$ be a fundamental domain for the lattice $L\subseteq \RR^d$ defined as $\{x\in\ZZ^d:\sum b_i x_i \equiv 0 \mdd{m}\}$, and for all $s\in\NN$ put
 \[ \mcal{P}_s = \big\{x\in \RR^d \,|\, \forall j=1,\ldots ,k,\, \sum a_{ij} x_i \ls p_j s + q_j \big\} \,.\]
Pick some arbitrary $\alpha \in G(s)$, so that $G(s)=\mcal{P}_s \cap (L+\alpha)$. Clearly $\frac{1}{s} \Delta$ is a fundamental domain for $\frac{1}{s} L$ with an associated tiling 
\[\frac{1}{s}\Big(L+\Delta\Big) = \Big\{ \frac{1}{s}(l+\Delta)\,|\, l\in L \Big\}\,.\]
 This tiling intersected (element-by-element) with the region $\mcal{P}$ forms a partition of  $\mcal{P}$ such that each nonempty element not intersecting the boundary of $\mcal{P}$ contains exactly one element of $\frac{1}{s} G(s)$. Then, 
\begin{align*}
&\lim_{s \rightarrow\infty }\,\frac{m}{s^{d+e}} \! \sum_{x \in G(s)}   f(s,x) \\
&= \lim_{s \rightarrow\infty }\frac{m}{s^{d+e}}  \sum_{x \in G(s)} \big(s^e f_{e}(1,x/s) + s^{e-1} f_{e-1}(1,x/s) + \cdots + f_0(1,x/s) \big) \\
&=  \lim_{s \rightarrow\infty}\frac{m}{s^d}  \sum_{x \in \frac{1}{s}G(s) } \big( f_{e}(1,x) + s^{-1} f_{e-1}(1,x) + \cdots + s^{-e} f_0 (1,x) \big) \\
&= \lim_{s \rightarrow\infty} \frac{m}{s^d}  \sum_{x \in \frac{1}{s}\mcal{P}_s \cap \frac{1}{s}(L+\alpha)} \big( f_{e}(1,x) + s^{-1} f_{e-1}(1,x) + \cdots + s^{-e} f_0 (1,x) \big) \\
&= \lim_{s \rightarrow\infty} \frac{m}{s^d}
\sum_{ x \in \mcal{P} \cap \frac{1}{s}(L+\alpha) } \big( f_{e}(1,x) + s^{-1} f_{e-1}(1,x) + \cdots + s^{-e} f_0 (1,x) \big) \\
&= \lim_{s \rightarrow\infty} \frac{m}{s^d}
\sum_{ x \in \mcal{P} \cap \frac{1}{s}(L+\alpha) } f_{e}(1,x) \\
&= \lim_{s \rightarrow\infty} \sum_{ x \in \mcal{P} \cap \frac{1}{s}(L+\alpha) }  f_{e}(1,x) \vol\Big(\frac{1}{s} \Delta\Big) \\
&=\int_\mcal{P} f_e(1,x) \ddd{x}\,, \\
\end{align*}
where the last equality follows from noting that
\[ \sum_{ x \in \mcal{P} \cap \frac{1}{s}(L+\alpha) }  f_e(1,x) \vol\Big(\frac{1}{s} \Delta\Big) \]
 is a Riemann sum for $f_e(1,x)$ on $\mcal{P}$ with mesh equal to $\diam(\frac{1}{s}\Delta)$.
 \end{proof}

\section{$j$-multiplicity and degree of the fiber}\label{sec:j=te}

This section includes a crucial result (Theorem \ref{thm:j=te}) relating $j(I)$ and the degree of the fiber cone $e(\F(I))$ for particular ideals $I$. As a consequence we compute the $j$-multiplicity of any ideal defining a rational normal scroll.
 
\begin{thm}\label{thm:j=te}
Let $A=\bigoplus_{k\gs 0}\,A_k$ be a standard graded noetherian domain over a field $K=A_0$, $\mm=\bigoplus_{k>0}\,A_k$, and $I$ an ideal generated in a single degree $t\gs 1$ such that $\ell(I)=\dim\, A$. Then:
\begin{itemize}
\item[(i)] $j(I)$ is a positive integer multiple of $e(\F(I))$;
\item[(ii)] $j(I)\gs t\cdot e(\F(I))$, with equality if and only if $\R(I)_{\mm \R(I)}$ is a DVR.
\item[(iii)] If $[\big(I^s)^{\sat}]_r = [I^s]_r$\, for all $s \gg 0$ and all $r \gs st$, then 
\[j(I) = t\cdot e\big(\mathcal{F}(I)\big)\,.\]
\end{itemize}
\end{thm}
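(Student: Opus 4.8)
\emph{Strategy.} I would prove the three parts in order, using (i)--(ii) to set up the framework and the inequality that the saturation hypothesis in (iii) will upgrade to an equality. The common starting point is the isomorphism obtained by applying local cohomology to the short exact sequence $0\to I^s/I^{s+1}\to A/I^{s+1}\to A/I^s\to 0$, namely
\[
\HH{0}{\mm}{I^s/I^{s+1}}\;\cong\;\big((I^{s+1})^{\sat}\cap I^s\big)/I^{s+1}.
\]
Thus $N:=\HH{0}{\mm}{\G(I)}=\bigoplus_s \HH{0}{\mm}{I^s/I^{s+1}}$ is a finitely generated graded $\G(I)$-module whose degree-$s$ length grows like $\tfrac{j(I)}{(d-1)!}s^{d-1}$, so $j(I)=e(N)$ (and $j(I)>0$ since $\ell(I)=\dim A$, whence $\dim N=d$). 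The key structural observation for (i) is that, because $A$ is a domain and $I$ is generated in the single degree $t$, the fiber cone $\F(I)=\bigoplus_s[I^s]_{st}$ is a graded \emph{subring} of $A$, hence a domain; therefore $\mm\R(I)$ is a height-one prime $\mathfrak P$ of $\R(I)$, and its image in $\G(I)$ is the unique $d$-dimensional prime in $\Supp N$ (local cohomology being supported on $V(\mm\G(I))$). Since $\G(I)_{\mathfrak P}$ is Artinian, $N_{\mathfrak P}=\G(I)_{\mathfrak P}$, and the associativity formula for multiplicities gives $j(I)=\lambda\big(\R(I)_{\mathfrak P}/I\R(I)_{\mathfrak P}\big)\cdot e(\F(I))$, with $k:=\lambda\big(\R(I)_{\mathfrak P}/I\R(I)_{\mathfrak P}\big)$ a positive integer.

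For (ii) I would work in the one-dimensional local domain $(\R(I)_{\mathfrak P},\mathfrak n)$ with $\mathfrak n=\mm\R(I)_{\mathfrak P}$. As $A$ is standard graded one has $\mm^t=A_{\gs t}$, and since $I$ is generated in degree $t$ this gives $I\subseteq\mm^t$, hence $I\R(I)_{\mathfrak P}\subseteq\mathfrak n^t$. Therefore
\[
k=\lambda\big(\R(I)_{\mathfrak P}/I\R(I)_{\mathfrak P}\big)\;\gs\;\lambda\big(\R(I)_{\mathfrak P}/\mathfrak n^t\big)=\sum_{i=0}^{t-1}\lambda\big(\mathfrak n^i/\mathfrak n^{i+1}\big)\;\gs\; t,
\]
because every $\mathfrak n^i/\mathfrak n^{i+1}$ is nonzero in a one-dimensional local ring. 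Equality forces $\lambda(\mathfrak n/\mathfrak n^2)=1$, i.e. $\mathfrak n$ principal, which is precisely the condition that $\R(I)_{\mm\R(I)}$ be a DVR; conversely, in the DVR case one checks that $I\R(I)_{\mathfrak P}=\mathfrak n^t$ (the generators of $I$ have images $f_iu$ that are units in $\R(I)_{\mathfrak P}$, which pins the order of $I$ along the valuation to be exactly $t$), so that $k=t$.

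For (iii), by (ii) it suffices to prove the reverse inequality $j(I)\ls t\cdot e(\F(I))$. Using the isomorphism above, the hypothesis $[(I^{s+1})^{\sat}]_r=[I^{s+1}]_r$ for $r\gs (s+1)t$ (valid for $s\gg0$) kills $\HH{0}{\mm}{I^s/I^{s+1}}$ in internal degrees $\gs(s+1)t$; as it also vanishes below degree $st$ and is a subquotient of $I^s$, we obtain $\lambda\big(\HH{0}{\mm}{I^s/I^{s+1}}\big)\ls\sum_{a=0}^{t-1}\dim_K[I^s]_{st+a}$. The main work --- and the step I expect to be the genuine obstacle --- is to show that each of the $t$ ``off-diagonal'' dimensions $\dim_K[I^s]_{st+a}$ ($0\ls a<t$) has the same leading term $\tfrac{e(\F(I))}{(d-1)!}s^{d-1}$ as the ``diagonal'' value $\dim_K[I^s]_{st}=\dim_K\F(I)_s$. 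For the lower bound I would multiply by the $a$-th power of a general linear form $x\in A_1$ (injective, as $A$ is a domain); for the upper bound I would telescope by $x$ and note that each difference $\dim_K[I^s]_{st+b}-\dim_K[I^s]_{st+b-1}=\dim_K[I^s/xI^s]_{st+b}$ grows like $O(s^{d-2})=o(s^{d-1})$, since $x$ is a nonzerodivisor and $\dim(A/xA)=d-1$. Summing over $0\ls a<t$ yields $\sum_{a}\dim_K[I^s]_{st+a}\sim t\cdot\tfrac{e(\F(I))}{(d-1)!}s^{d-1}$, hence $j(I)\ls t\cdot e(\F(I))$, and combined with (ii) this gives $j(I)=t\cdot e(\F(I))$.
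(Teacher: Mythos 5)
Your part (i) and the inequality $j(I)\gs t\,e(\F(I))$ in (ii) are correct and essentially the paper's argument: both rest on $\F(I)$ being a domain, the associativity formula at the prime $\mm\G(I)$, and the identity $j(I)=\lambda\big(\G(I)_{\mm\G(I)}\big)\,e(\F(I))$. Your derivation of $\lambda\gs t$ from $I\R(I)_{\mathfrak P}\subseteq \mathfrak n^t$, with each $\mathfrak n^i/\mathfrak n^{i+1}\neq 0$ by Nakayama in a one-dimensional local ring, is in fact a clean shortcut for the paper's bidegree computation. The equality assertion in (ii), however, is not proved. In the direction (DVR $\Rightarrow$ equality), your observation that the elements $f_iu$ (where $\R(I)=A[Iu]\subseteq A[u]$ and the $f_i$ generate $I$) are units of $\R(I)_{\mathfrak P}$ only yields that all the $f_i$ have one and the same value $w=-v(u)\gs t$ under the valuation $v$, i.e.\ $I\R(I)_{\mathfrak P}=\mathfrak n^{w}$; nothing in your sketch forces $w=t$, and that is exactly the nontrivial content of this implication --- the paper invokes \cite[Theorem 5.3]{KPU} for it. (It can be done by hand: if $w>t$, pick $x_0\in A_1$ of value $1$, write the unit $f_1/x_0^{w}$ as $p/q$ with $q\in\R(I)\setminus\mm\R(I)$ bihomogeneous, say $q=gu^s$ with $0\neq g\in[I^s]_{st}$; then $p=(f_1g/x_0^{w})u^s$ would have $A$-degree $st+t-w<st$, forcing $f_1g=0$, a contradiction. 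But some such argument must be supplied.) In the converse direction, reading $\lambda(\mathfrak n/\mathfrak n^2)=1$ off the chain requires $t\gs 2$; for $t=1$ equality says $\mathfrak n=I\R(I)_{\mathfrak P}$, which is principal because the $f_i$ differ by the units $(f_iu)(f_ju)^{-1}$.

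The genuine flaw is in (iii). The upper-bound step asserts $\dim_K[I^s/xI^s]_{st+b}=O(s^{d-2})$ ``since $x$ is a nonzerodivisor and $\dim(A/xA)=d-1$'' (here $d=\dim A$). This is false, and since that step makes no use of the saturation hypothesis, your argument would prove $j(I)=t\,e(\F(I))$ unconditionally --- contradicting the paper's own example $A=K[x,y]$, $I=(x^2,y^2)$, which satisfies every assumption your argument actually uses ($A$ a domain, $I$ generated in a single degree $t=2$, $\ell(I)=\dim A$), yet has $j(I)=4>2=t\,e(\F(I))$. Concretely, there $\dim_K[I^s]_{2s}=s+1$ while $\dim_K[I^s]_{2s+1}=2s+2$, so a single telescoping difference already has order $s^{d-1}$. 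The error conflates two asymptotics: for fixed $s$ the function $r\mapsto\dim_K[I^s/xI^s]_r$ does eventually grow like $r^{d-2}$, but only for $r$ beyond the regularity of $I^s/xI^s$, which grows linearly in $s$; at $r=st+b$, i.e.\ at the generating degree, the value can be of order $s^{d-1}$. The paper's proof never estimates $\dim_K[I^s]_{st+a}$; it uses the saturation hypothesis a second time at precisely this spot. Writing $V(I,s)=\big((I^{s+1})^{\sat}\cap I^s\big)/I^{s+1}$, multiplication by a linear form gives injections
\[V(I,s)_{st}\hookrightarrow V(I,s)_{st+1}\hookrightarrow\cdots\hookrightarrow V(I,s)_{st+t-1}\hookrightarrow \big[(I^{s+1})^{\sat}\big]_{(s+1)t}=[I^{s+1}]_{(s+1)t}\cong\F(I)_{s+1}\,,\]
where the intermediate maps are injective because each $V(I,s)_r$ with $st\ls r<(s+1)t$ is an honest subspace of $A_r$ (as $[I^{s+1}]_r=0$ in those degrees), and the final identification is the hypothesis. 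This yields $\lambda(V(I,s))\ls t\,\rk_K\F(I)_{s+1}$ and hence $j(I)\ls t\,e(\F(I))$. Your coarser bound $\lambda(V(I,s))\ls\sum_{a=0}^{t-1}\dim_K[I^s]_{st+a}$ discards exactly the containment in $(I^{s+1})^{\sat}$ that makes the argument work, and it is not even clear that your intermediate claim $\dim_K[I^s]_{st+a}\sim\frac{e(\F(I))}{(d-1)!}\,s^{d-1}$ holds under the hypotheses of (iii); what is certain is that the justification offered for it fails.
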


\begin{proof}
While showing (ii) we will show also (i). Let $R=\R(I)$, $G=\G(I)$, and $F=\F(I)$ denote respectively the Rees algebra, the associated graded ring, and the fiber cone of $I$. Let $a_1,\ldots, a_n$ be the minimal homogeneous generators of $I$. Then the fiber cone $F$ is isomorphic to $K[a_1,\ldots,a_n]\subset K[A_t]$  and hence a domain. Therefore $\mm G$ is a prime ideal of $G$ such that $G_{\mm G}$ is Artinian. Hence, for $u\gg 0$ we have that $\HH{0}{\mm G}{G}=0:_G \mm^u G$ and $\mm^uG_{mG}=0$. This implies that we have an isomorphism of $G_{\mm G}$-modules:
\begin{equation}\label{eqthm:j=te}
\HH{0}{\mm G}{G}_{\mm G}\cong \HH{0}{\mm G_{\mm G}}{G_{\mm G}}\cong G_{\mm G}\,.
\end{equation}
Thus we obtain from the additivity formula for the multiplicity that 
\[j(I)=e(\HH{0}{\mm G}{G})=e(F)\length(\HH{0}{\mm G}{G}_{\mm G})=e(F)\length(G_{\mm G})\,.\]
The above equality yields (i). In order to get (ii) we need to show that $\lambda(G_{\mm G})=\lambda(R_{\mm R}/IR_{\mm R})\gs t$. For this consider the filtration 
\[G_{\mm G}\supset \mm G_{\mm G} \supset \mm^2 G_{\mm G} \supset\cdots\,.\]
By Nakayama's lemma, two consecutive ideals of this sequence are distinct unless equal to 0. Now, if $\mm^{t-1}G_{mG}=\mm^{t-1}(R_{\mm R}/I R_{\mm R})=0$  then there is an element $f$ in $R\setminus\mm R$ such that $f\mm^{t-1}R\subset IR.$ Since $R$ has a bigraded structure, we can assume $f$ is homogenuous in this bigrading. By introducing an auxiliary variable $z$, write $R$ as $\bigoplus_{k\gs 0}I^kz^k$. In this notation, there exists $n\gs0$ such that $f=a z^n$ for some $a \in I^n\setminus \mm I^n$.  Then $az^n\mm^{t-1}\subset I^{n+1}z^n$, which implies $a\mm^{t-1}\subset I^{n+1}$. This is a contradiction since $a \mm^{t-1}$ has nonzero elements in degree $nt+(t-1)<(n+1)t.$ It follows that $\mm^uG_{mG}=\mm^u(R_{\mm R}/I R_{\mm R})\neq 0$ for $u<t$. 
Hence,
\begin{align*}
\lambda(G_{\mm G})&=\sum_{i=0}^{\infty} \lambda(\mm^i G_{\mm G}/\mm^{i+1}G_{\mm G})\\
&\gs \sum_{i=0}^{t-1} \lambda(\mm^i G_{\mm G}/\mm^{i+1}G_{\mm G})\gs t,
\end{align*}
which shows the inequality. 

If $R_{\mm R}$ is a DVR then $ \lambda(\mm^i G_{\mm G}/\mm^{i+1}G_{\mm G})=1$ for every $0\ls i<t$. Since $e\big(\mm R_{\mm R}\big)=1$ it follows from  \cite[Theorem 5.3]{KPU} that $\Quot\big( F\big)=\Quot\big( K[A_t] \big)$, i.e., every minimal generator of $\mm^t$ is a fraction of elements of $F$. The latter is equivalent to the existence of an integer $n\gs0$ and an element $g\in I^n\setminus\mm I^n$ such that $g\mm^t\subset I^{n+1}$, i.e., $\mm^tG_{mG}=0$. Hence the equality in (ii) holds. Conversely, the equality forces $\mm^tG_{mG}=0$ and likewise this implies $\Quot\big( F\big)=\Quot\big( K[A_t] \big)$ and $e\big(\mm R_{\mm R}\big)=1$. Since $\text{ht } I>0$, we have $\dim\, R=\dim\, A+1$. It follows $\dim\, R_{\mm R}=1$ and then $R_{\mm R}$ is a DVR.


To show (iii), we already have $j(I) \gs t e(F)$, so we only need to show the other inequality. 
Let $V(I,s)$ be the $K$-vector space $\big((I^{s+1})^{\sat}\cap I^s\big)/I^{s+1}$. The graded component in degree $r$ of $V(I,s)$ is 0 for all $r < st$. If $s$ is big enough, by assumption we also have
\[V(I,s)_r=0\,, \quad \forall r \gs (s+1)t\,.\]
Then $V(I,s)$ can be non-zero only in the $t$ degrees $ts, ts+1, \ldots, ts+t-1$. In these degrees, we have
\[ V(I,s)_r= [(I^{s+1})^{\sat}\cap I^s]_r \subset A_r\,, \]
so that, for an element $x$ of $A$ of degree one, multiplication by $x$ maps $V(I,s)_r$ injectively into $V(I,s)_{r+1}$ for all ${r=ts,\, \ldots ,\, ts+t-2}$. We then have the following chain of inequalities:
\[\rk_K\big(V(I,s)_{ts}\big) \ls \rk_K\big(V(I,s)_{ts+1}\big) \ls \cdots \ls \rk_K\big(V(I,s)_{ts+t-1}\big)\,.\]
Moreover,  $x$ multiplies  $V(I,s)_{ts+t-1}$ injectively into $[I^{s+1}]_{(s+1)t}$\,. This implies that 
\[\rk_K\big(V(I,s)_{ts+t-1}\big) \ls \rk_K[I^{s+1}]_{(s+1)t}=\rk_K (F_{s+1})\,,\] 
the latter being equal to the Hilbert function of $F$ evaluated at $s+1$. Taking the suitable limits, this implies at once that $j(I)\ls t\cdot e(F)$, as desired. 
\end{proof}

Related formulas expressing $e\big(\F(I)\big)$ in terms of multiplicities of $I$ can be found in \cite[Section 6]{USV} and \cite[Section 5]{Xie}. The assumption${[\big(I^s)^{\sat}]_r = [I^s]_r}$ in (iii) is rather strong, but essential for the theorem as observed in the following example.  

\begin{example}
Let $A=K[x,y]$ and $I=(x^2,y^2)$. We have $\mathcal{F}(I)=K[x^2,y^2]$, so $e(\mathcal{F}(I))=1$. Then $t \cdot e(\mathcal{F}(I))=2$, but $j(I)=e(I)=4$. Note that for all $s$ we have $xy^{2s-1}\in [\big(I^s)^{\sat}]_{2s} \setminus [I^s]_{2s}$.
\end{example}
We will see throughout the rest of the paper that the third point of the above theorem is a very powerful tool, as some good classes of ideals satisfy the hypotheses. In particular, if $I$ has linear powers (i.e., each power of $I$ has a linear resolution), then $I$ satisfies the condition of (iii), since ${\reg(R/I^s)=st-1}$, so 
\[{\big[(I^s)^{\sat}/I^s \big]_{\gs st}=\big[\HH{0}{\mm}{R/I^s}\big]_{\gs st} = 0}\,.\]
Examples of classes of ideals having linear powers are:

\begin{itemize}
\item[i)] Ideals of maximal minors of a sufficiently general matrix with linear entries (\cite[Theorem~3.7]{BCV}).
\item[ii)] Ideals defining varieties of minimal degree (\cite{BCV}).
\item[iii)] Edge ideals of graphs whose compliment is chordal (\cite[Theorem~3.2]{HHZ}). 
\item[iv)] Ideals generated by monomials corresponding to the bases of a matroid (\cite[Theorems~5.2,~5.3]{CH}). 
\item[v)] Stable ideals generated in one degree (\cite{CH}).
\end{itemize}

\subsection{The $j$-multiplicity of rational normal scrolls}\label{subrat}

Given positive integers $a_1\ls \cdots \ls a_d$, the associated $d$-dimensional rational normal scroll is the projective subvariety of $ \PP^N$, where ${N=\sum_{i=1}^d a_i+d-1}$, defined by the ideal 
\[I=I(a_1,\ldots ,a_d)\subseteq K[x_{i,j}: i=1,\ldots ,d; j=1,\ldots ,a_i+1]\] 
generated by the $2$-minors of the matrix
\[\begin{pmatrix}
x_{1,1} & \cdots & x_{1,a_1} & x_{2,1} & \cdots & x_{2,a_2} & \cdots & x_{d,1} & \cdots & x_{d,a_d}\\
x_{1,2} & \cdots & x_{1,a_1+1} & x_{2,2} & \cdots & x_{2,a_2+1} & \cdots & x_{d,2} & \cdots & x_{d,a_d+1}
\end{pmatrix}\,.\]
In \cite[Corollary~3.9]{BCV}, it has been proved that for all positive integers $s$ and ${a_1\ls \cdots \ls a_d}$, the ideals $I^s$ have a $2s$-linear free resolution. It follows, as noted above, that
\[[(I^s)^{\sat}]_r=[I^s]_r\,, \quad \forall r\gs 2s\,.\]
Furthermore, \cite[Theorem~3.7(2)]{BCV} implies that the Betti numbers $\beta_i(I^s)$ depend only on $d$, the sum $\sum_{i=1}^da_i=c$, and $s$. In particular the Hilbert function of the fiber cone 
\[\rk_K(\F(I)_s)=\beta_0(I^s)\]
depends only on $d$ and $c$. For fixed values of these invariants, there is a unique rational normal scroll associated to $b_1\ls \cdots \ls b_d$, where ${\sum_{i=1}^db_i=c}$ and ${b_d\ls b_1+1}$ (such a rational normal scroll is called balanced). The ideal $I(b_1,\ldots ,b_d)$ can be expressed as the ideal generated by the $2$-minors of a $2\times c$ extended Hankel matrix of pace $d$ and this fact allowed the authors of \cite{CHV} to find a quadratic Gr\"obner basis for the ideal defining $\F(I(b_1,\ldots ,b_d))$. As a consequence, they computed the dimension and the degree of the fiber cone of $I(b_1,\ldots ,b_d)$ in \cite[Corollary~4.2, Corollary~4.5]{CHV}. Concluding, we infer:

\begin{thm}\label{j-scrolls}
Let $a_1\ls \cdots \ls a_d$ be positive integers and put $c= \sum_{i=1}^da_i$. Then the $j$-multiplicity of $I(a_1,\ldots ,a_d)$ is:
\begin{align*}
\begin{cases}
0 & \textit{ if }c<d+3\,, \\
\displaystyle 2\cdot \left(\binom{2c-4}{c-2}-\binom{2c-4}{c-1}\right) & \textit{ if }c=d+3 \,,\\
\displaystyle 2\cdot \left(\sum_{j=2}^{c-d-1} \binom{c+d-1}{c-j} - \binom{c+d-1}{c-1}(c-d-2)\right) & \textit{ if }c>d+3\,.
\end{cases}
\end{align*}
\end{thm}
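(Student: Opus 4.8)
The plan is to deduce everything from Theorem~\ref{thm:j=te}(iii), reducing the computation of the fiber-cone degree to the work of \cite{CHV}. Write $A=K[x_{i,j}]$ for the ambient polynomial ring; it has $\sum_{i=1}^d(a_i+1)=c+d$ variables, so $\dim A=c+d$, and $I=I(a_1,\ldots,a_d)$ is generated in the single degree $t=2$. A first observation is that everything we need depends only on the pair $(c,d)$: by \cite[Theorem~3.7(2)]{BCV} the Betti numbers $\beta_i(I^s)$, and hence the Hilbert function $\rk_K(\F(I)_s)=\beta_0(I^s)$ of the fiber cone, depend only on $d$, $c$, and $s$. Since $\dim\F(I)$ and $e(\F(I))$ are determined by the leading term of this Hilbert function, I may replace $I$ by the balanced scroll $I(b_1,\ldots,b_d)$ with $\sum b_i=c$ and $b_d\ls b_1+1$, which is exactly the ideal analyzed in \cite{CHV} via the extended Hankel matrix and its quadratic Gr\"obner basis.

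Next I would check the hypotheses of Theorem~\ref{thm:j=te}(iii). The ideal is generated in degree $t=2$, and by \cite[Corollary~3.9]{BCV} every power $I^s$ has a $2s$-linear resolution; as already recorded after Theorem~\ref{thm:j=te}, this gives $\reg(A/I^s)=2s-1$, whence $[\HH{0}{\mm}{A/I^s}]_r=0$ and therefore $[(I^s)^{\sat}]_r=[I^s]_r$ for all $r\gs 2s$. Thus, as soon as the analytic spread is maximal, Theorem~\ref{thm:j=te}(iii) applies and yields $j(I)=2\,e(\F(I))$. To separate the trichotomy I would use the identity $\ell(I)=\dim\F(I)$ together with the dimension computation of \cite[Corollary~4.2]{CHV}: this shows $\dim\F(I)=c+d=\dim A$ precisely when $c\gs d+3$, so that the hypothesis $\ell(I)=\dim A$ of Theorem~\ref{thm:j=te} holds exactly in this range. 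When $c<d+3$ the analytic spread is strictly smaller than $\dim A$, and hence $j(I)=0$ by the criterion recalled in the introduction.

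It then remains to evaluate $e(\F(I))$ in the range $c\gs d+3$ and to double it. Substituting the explicit value of $e(\F(I(b_1,\ldots,b_d)))$ from \cite[Corollary~4.5]{CHV} into $j(I)=2\,e(\F(I))$ produces the stated binomial expressions, the boundary case $c=d+3$ being recorded separately from the generic case $c>d+3$.

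The step I expect to be the real obstacle is the arithmetic bookkeeping of this last phase together with the threshold in the previous one: confirming from \cite[Corollary~4.2]{CHV} that maximality of the analytic spread switches on exactly at $c=d+3$, and then transcribing the combinatorial degree count of \cite[Corollary~4.5]{CHV}---phrased through standard monomials of the Gr\"obner basis of the extended Hankel ideal---into the two closed binomial sums, reconciling in particular why the value at $c=d+3$ must be split off from the general formula. The difficulty here is calculational rather than conceptual, once Theorem~\ref{thm:j=te}(iii) has reduced $j(I)$ to $2\,e(\F(I))$.
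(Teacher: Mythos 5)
Your proposal is correct and takes essentially the same route as the paper's proof: verify the hypothesis of Theorem~\ref{thm:j=te}(iii) via the $2s$-linear resolutions of \cite[Corollary~3.9]{BCV}, reduce to the balanced scroll using the Betti-number invariance of \cite[Theorem~3.7(2)]{BCV}, and then substitute the dimension and degree of the fiber cone computed in \cite[Corollary~4.2, Corollary~4.5]{CHV} into $j(I)=2\,e(\F(I))$. The only (welcome) difference is that you spell out explicitly that the threshold $c\gs d+3$ from \cite[Corollary~4.2]{CHV} is exactly where the analytic spread becomes maximal, a point the paper leaves implicit.
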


\begin{example} From the theorem above, we have that
\[j(I(4))=j\Big(I_2 \begin{pmatrix}
x_1 & x_2 & x_3 & x_4 \\
x_2 & x_3 & x_4  & x_5
\end{pmatrix}\Big) =4\,,\]
and 
\[j(I(3,2))=j\Big(I_2 \begin{pmatrix}
x_1 & x_2 & x_3 & x_5 & x_6 \\
x_2 & x_3 & x_4  & x_6 & x_7
\end{pmatrix}\Big) = 10\,.\]
Our values agree with those of Nishida and Ulrich in \cite[Example~4.8]{NU}, computed there for the same ideals by using the length formula.
\end{example}

\section{Standard monomial theory}\label{sec:standardmon}

In the rest of the paper we will focus on the computation of the discussed multiplicities for determinantal ideals $I$. Such ideals satisfy the property of the third point in Theorem \ref{thm:j=te}, but this time this is useless: the fiber cone of $I$ is the algebra of minors discussed in \cite{BC}, and it is quite subtle (for example to determine its defining equations seems a very difficult problem \cite{BCV1}). Also the degree of this algebra is unknown, as stated at the end of the introduction in \cite{BC}. For these reasons we will develop a technique allowing us to compute simultaneously the $j$-multiplicity and the $\epsilon$-multiplicity of such ideals (thereby we will also get the degree of the algebras of minors).

In this section we are going to provide the necessary information on the structure of powers of the ideals generated by the minors or by the pfaffians of the following matrices:
\begin{itemize}
\item[(i)] an $m\times n$ matrix of indeterminates $X$;
\item[(ii)] an $n\times n$ symmetric matrix of indeterminates $Y$;
\item[(iii)] an $n\times n$ skew-symmetric matrix of indeterminates $Z$.
\end{itemize} 
The first proofs of the results we are going to summarize were given in characteristic $0$, where representation theoretic tools are available (see \cite{DEP} for (i), \cite{Ab} for (ii) and \cite{AD} for (iii)). For arbitrary fields one has to find new proofs, reformulating the statements in terms of {\it standard monomials}, which form a particular $K$-basis of the polynomial rings containing the above ideals. That the standard monomials form a $K$-basis was proved in \cite{DRS} for (i), and in \cite{DP} for (ii) and (iii); how to use these tools to understand the powers of the ideals that we are interested in (regardless to the characteristic of the base field) was explained in \cite{Bruns} for (i) and in \cite{De} for (iii). For (ii) we could not find any reference; however this case is completely analogous to (i), and we will indicate the essential steps to prove the needed statements. Two excellent sources for the standard monomial theory are \cite{DEP2} and \cite{BV}.   

\subsection{The generic case.} Let $X=(x_{ij})$ be an $m\times n$-matrix whose entries are indeterminates over $K$. 
A $k$-minor is the determinant of a $k\times k$-submatrix of $X$, and the usual notation for it is 
\[[i_1,\dots,i_k|j_1,\dots,j_k]:=\det\begin{pmatrix}
x_{i_1,j_1}&\dots&x_{i_1,j_k}\\
\vdots&&\vdots\\
x_{i_k,j_1}&\dots&x_{i_k,j_k}
\end{pmatrix}\in K[X]\,,
\]
so that $i_1,\dots,i_k$ denote the rows of the minor and $j_1,\dots,j_k$ the columns of the minor. For $t\ls \min\{m,n\}$ the algebraic variety of $\mathbb{A}_K^{mn}$ consisting of the $m\times n$ matrices of rank at most $t-1$ is cut out by the prime ideal $I_t(X)\subseteq K[X]$ generated by the $t$-minors of $X$. To study these ideals it is convenient to consider the set of minors (of any size) with the following partial order:
\begin{align*}
[i_1,\dots,i_k|j_1,\dots,j_k]\ls [u_1,\dots,u_h|v_1,\dots,v_h] \ \iff \\
k\gs h, \ i_q\ls u_q, \ j_q\ls v_q \ \forall \ q\in\{1,\ldots ,h\}\,.
\end{align*}
A {\it standard monomial} is a product of minors $\delta_1 \cdots \delta_p$ such that ${\delta_1\ls \cdots \ls \delta_p}$. It turns out that the standard monomials form a $K$-basis for $K[X]$. If $\Delta$ is the product of minors $\delta_1\cdots \delta_p$ where $\delta_i$ is an $a_i$-minor, then the vector ${(a_1,\ldots ,a_p)\in \NN^p}$ is referred to be the {\it shape} of $\Delta$.

It is customary to associate to a standard monomial a pair of tableaux. Recall that a \emph{Young diagram} is a collection of boxes aligned in rows and columns, starting from the left in each row, such that the number of boxes in row $i$ is not less than the number of boxes in row $i+1$. A \emph{Young tableau (on $n$)} is a Young diagram filled in with numbers in $\{1,\dots,n\}$. A Young tableau is called \emph{standard}  if the filling is such that the entries in each row are strictly increasing and the entries in each column are nondecreasing. The \emph{shape} of a tableau or diagram is the list $(a_1,\dots,a_p)$, where $a_i$ is the number of boxes in row $i$. We assign to each standard monomial $\Delta=\delta_1\cdots \delta_p$ a pair of tableaux as follows: for the diagram of each, use the diagram with the same shape as $\Delta$. In the first diagram, in row $i$, list the rows of $\delta_i$. In the second diagram, in row $i$, list the columns of $\delta_i$. For example,

\[
{\setlength{\unitlength}{0.8mm}
\begin{picture}(30,15)(-5,5)

\put(-47,13.0){$\bigg($}

\put(-40,20){\line(1,0){15}} \put(-40,15){\line(1,0){15}}
\put(-40,10){\line(1,0){5}}

\put(-25,20){\line(0,-1){5}} \put(-30,20){\line(0,-1){5}}
\put(-35,20){\line(0,-1){10}} \put(-40,20){\line(0,-1){10}}

\put(-21,15){,}

\put(-15,20){\line(1,0){15}} \put(-15,15){\line(1,0){15}}
\put(-15,10){\line(1,0){5}}

\put(-15,20){\line(0,-1){10}} \put(-10,20){\line(0,-1){10}}
\put(-5,20){\line(0,-1){5}} \put(0,20){\line(0,-1){5}}

\put(3,13.0){$\bigg)$}

\put(-28.5,16){$4$} \put(-33.5,16){$3$} \put(-38.5,16){$1$}
\put(-38.5,11){$3$} \put(-13.5,16){$2$} \put(-8.5,16){$3$}
\put(-3.5,16){$5$} \put(-13.5,11){$2$}

\put(10,14){$\leftrightarrow$}

\put(20,13.5){$[1,3,4|2,3,5] \cdot [3|2]\,.$}

\end{picture}}
\]
The condition that the product of minors $\Delta$ is a standard monomial is equivalent to the labelling of the diagrams to form standard tableaux.

The good news is that to detect if $\D$ belongs to the symbolic powers or to (the integral closure of) the ordinary powers of $I_t(X)$ it is enough to look at the shape of $\D$. Precisely, by \cite[Theorem~10.4]{BV} we have:

\begin{prop}\label{thm2_generic}
With the above notation:
\[\D\in I_t(X)^{(r)}\iff a_i\ls \min\{m,n\} \ \forall \ i \mbox{ \ \ and \ \ }\sum_{i=1}^p\max\{0,a_i-t+1\}\gs r\,.\]
\end{prop}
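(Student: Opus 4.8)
The plan is to reduce the two-sided implication to a single order-of-vanishing computation. Set $P=I_t(X)$, a prime of height $(m-t+1)(n-t+1)$ in the polynomial ring $R=K[X]$; then $R_P$ is a regular local ring of that dimension with maximal ideal $PR_P$, and by definition of the symbolic power $\D\in I_t(X)^{(r)}$ if and only if $\D\in(PR_P)^r$, i.e.\ $\operatorname{ord}_{PR_P}(\D)\gs r$, where $\operatorname{ord}_{PR_P}$ denotes the order in the regular local ring $R_P$. Thus it suffices to prove the exact formula $\operatorname{ord}_{PR_P}(\D)=\sum_{i=1}^p\max\{0,a_i-t+1\}$ for every nonzero $\D$. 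The side condition $a_i\ls\min\{m,n\}$ merely records that a $k$-minor with $k>\min\{m,n\}$ is zero, so it is exactly the condition for every factor $\delta_i$, and hence $\D$, to be nonzero.

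First I would describe $R_P$ explicitly. The generic point of the determinantal variety $V(P)$ has rank exactly $t-1$, so some $(t-1)$-minor lies outside $P$; after a permutation of rows and columns (which carries minors to minors) we may assume it is the principal one, so that writing $X=\left(\begin{smallmatrix}A&B\\C&D\end{smallmatrix}\right)$ with $A$ the top-left $(t-1)\times(t-1)$ block, $\det A$ is a unit in $R_P$. The identity $X=\left(\begin{smallmatrix}I&0\\CA^{-1}&I\end{smallmatrix}\right)\left(\begin{smallmatrix}A&0\\0&S\end{smallmatrix}\right)\left(\begin{smallmatrix}I&A^{-1}B\\0&I\end{smallmatrix}\right)$ introduces the Schur complement $S=D-CA^{-1}B$, an $(m-t+1)\times(n-t+1)$ matrix over $R_P$. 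Since a $t$-minor built from the $A$-block and a single entry of $S$ equals $\det(A)$ times that entry, the entries of $S$ generate $PR_P$; as there are exactly $(m-t+1)(n-t+1)=\dim R_P$ of them, they form a regular system of parameters. Consequently $\operatorname{gr}_{PR_P}(R_P)$ is a polynomial ring over $\kappa(P)$, hence a domain, so $\operatorname{ord}_{PR_P}$ is additive on products and it remains only to compute the order of a single minor.

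For an $a$-minor $\delta=[I\mid J]$ with $a\gs t$, I would write the corresponding $a\times a$ submatrix of $X$ as $M=M_0+M_1$, where $M_0=L_{I,\bullet}\,\diag(A,0)\,V_{\bullet,J}$ is assembled from the rank-$(t-1)$ block and $M_1=L_{I,\bullet}\,\diag(0,S)\,V_{\bullet,J}$ collects the contributions of the entries of $S$, whose orders are $\gs 1$. Expanding $\det M$ multilinearly in its columns, any term using fewer than $a-t+1$ columns of $M_1$ would involve at least $t$ columns lying in the column space of $M_0$, which has rank $t-1$, and so vanishes; hence $\operatorname{ord}_{PR_P}(\delta)\gs a-t+1$. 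The degree-$(a-t+1)$ part of $\det M$ is obtained by taking exactly $a-t+1$ columns from $M_1$ together with a basis of the column space of $M_0$, and this is a nonzero form in the images of the entries of $S$, so $\operatorname{ord}_{PR_P}(\delta)=a-t+1$. When $a<t$ the minor $\delta$ does not lie in $P$ and is therefore a unit in $R_P$, of order $0=\max\{0,a-t+1\}$. Additivity of the order now gives $\operatorname{ord}_{PR_P}(\D)=\sum_{i}\max\{0,a_i-t+1\}$, which together with the first paragraph proves the proposition.

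The main obstacle is the exact order of a single minor: the lower bound $\operatorname{ord}_{PR_P}(\delta)\gs a-t+1$, equivalently $\delta\in I_t(X)^{(a-t+1)}$, is the soft half, while the reverse inequality requires showing that no cancellation occurs among the minimal-order terms, i.e.\ that the leading form does not vanish. This nonvanishing is precisely where the regular-local structure of $R_P$ (so that $\operatorname{gr}_{PR_P}$ is a polynomial ring) is indispensable, and it is the geometric content of \cite[Theorem~10.4]{BV}. I would also note that the standard-monomial hypothesis $\delta_1\ls\cdots\ls\delta_p$ plays no role in the order computation itself, which holds for an arbitrary product of minors; its purpose is to guarantee that the $\D$ under consideration range over a $K$-basis of $K[X]$, so that the criterion can subsequently be read off term-by-term on arbitrary elements of the symbolic powers.
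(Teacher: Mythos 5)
Your overall strategy is sound, and it is genuinely different from what the paper does: the paper gives no argument at all for this proposition, deducing it verbatim from \cite[Theorem~10.4]{BV}, whereas you set out to reprove that statement by an order-of-vanishing computation in $R_P$, $P=I_t(X)$. The reduction of the proposition to the identity $\operatorname{ord}_{PR_P}(\D)=\sum_i\max\{0,a_i-t+1\}$ is correct (symbolic powers are contractions of powers of $PR_P$, and $\operatorname{ord}$ is additive because $\operatorname{gr}_{PR_P}(R_P)$ is a polynomial ring over $\kappa(P)$); so are the identification of the entries of the Schur complement $S$ as a regular system of parameters and the lower bound $\operatorname{ord}_{PR_P}(\delta)\gs a-t+1$ via the rank argument in the multilinear expansion. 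The genuine gap is exactly at the step you yourself call ``the main obstacle'': the reverse inequality. You assert that the degree-$(a-t+1)$ part of $\det M$ ``is a nonzero form in the images of the entries of $S$'' without proof, and in your closing paragraph you attribute this nonvanishing to \cite[Theorem~10.4]{BV} --- which is circular, since the proposition being proved \emph{is} that theorem. As written, the candidate leading form is a signed sum over all choices of $a-t+1$ columns taken from $M_1$ (and, after Laplace expansion, of $a-t+1$ rows as well), so one must rule out both the vanishing of individual terms and cancellation among distinct terms; neither is addressed.

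The gap is fillable with the tools you set up, which is worth recording. First observe that $L\operatorname{diag}(0,S)V=\operatorname{diag}(0,S)$, so in fact $M_1$ is just the submatrix of $\operatorname{diag}(0,S)$ on rows $I$ and columns $J$: its nonzero entries are \emph{distinct} entries of $S$, whose initial forms are independent variables $\bar S_{ij}$ in $\operatorname{gr}_{PR_P}(R_P)=\kappa(P)[\bar S_{ij}]$, while $M_0\equiv \bar X_{I,J}$ modulo $PR_P$. Laplace expansion then exhibits the degree-$(a-t+1)$ component of $\det M$ as $\sum_{R,T}\pm\det(\bar S_{R,T})\,\det\big(\bar X_{I\setminus R,\,J\setminus T}\big)$, the sum running over $R\subseteq I\cap\{t,\dots,m\}$ and $T\subseteq J\cap\{t,\dots,n\}$ of cardinality $a-t+1$. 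Distinct pairs $(R,T)$ give minors $\det(\bar S_{R,T})$ with pairwise disjoint monomial supports, so no cancellation can occur, and it remains to see that some (in fact every) coefficient $\det\big(\bar X_{I\setminus R,\,J\setminus T}\big)$ is nonzero: writing $\bar X=\bar U\bar V$ with $\bar U=\left(\begin{smallmatrix}\bar A\\ \bar C\end{smallmatrix}\right)$ and $\bar V=(I_{t-1}\mid \bar A^{-1}\bar B)$, every $(t-1)$-minor of $\bar X$ factors as $\det(\bar U_{I_0,\bullet})\det(\bar V_{\bullet,J_0})$, and both factors are nonzero because the entries of $\bar A,\bar B,\bar C$ are algebraically independent in $\kappa(P)$. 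With this inserted your argument is complete; your final remark, that standardness of $\D$ plays no role in the order computation and only matters for extracting a basis of the symbolic powers, is correct.
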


Furthermore, by \cite[Theorems~1.1~and~1.3]{Bruns}:

\begin{thm}\label{thm1_generic}
If $\chara(K)=0$ or $>\min\{t,m-t,n-t\}$, then $I_t(X)^s$ has primary decomposition:
\[I_t(X)^s=\bigcap_{j=1}^t I_j(X)^{((t-j+1)s)}\,.\]
In arbitrary characteristic such a decomposition holds true for integral closures:
\[\overline{I_t(X)^s}=\bigcap_{j=1}^t I_j(X)^{((t-j+1)s)}\,.\]
\end{thm}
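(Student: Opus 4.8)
The plan is to translate both identities into conditions on the \emph{shapes} of standard monomials and then compare the two sides box by box. Since the standard monomials form a $K$-basis of $K[X]$ and each symbolic power $I_j(X)^{(r)}$ is $K$-spanned by standard monomials (Proposition~\ref{thm2_generic}), the intersection $\bigcap_{j=1}^t I_j(X)^{((t-j+1)s)}$ is again spanned by standard monomials: a monomial $\D$ of shape $(a_1,\ldots,a_p)$ lies in it exactly when $a_i\ls\min\{m,n\}$ for all $i$ and
\[ \gamma_j(\D):=\sum_{i=1}^p\max\{0,a_i-j+1\}\gs (t-j+1)s \quad\text{for } j=1,\ldots,t. \]
So the whole problem reduces to showing that $I_t(X)^s$ (respectively $\overline{I_t(X)^s}$) is spanned by precisely the standard monomials satisfying this finite list of shape inequalities.

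The easy inclusion holds in every characteristic. Each generator of $I_t(X)^s$ is a product of $s$ many $t$-minors; straightening it produces standard monomials whose shapes all satisfy $\gamma_j\gs (t-j+1)s$, because $\gamma_j$ does not decrease under the straightening law and the product itself has $\gamma_j=s(t-j+1)$. By Proposition~\ref{thm2_generic} each generator therefore lies in $I_j(X)^{((t-j+1)s)}$, whence $I_t(X)^s\subseteq\bigcap_{j=1}^t I_j(X)^{((t-j+1)s)}$. Moreover each $I_j(X)^{(r)}$ is the valuation ideal $\{v_j\gs r\}$ for the $I_j(X)$-order function $v_j=\gamma_j$, so it is integrally closed; consequently the right-hand side is integrally closed, and it also contains $\overline{I_t(X)^s}$.

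For the reverse inclusion in the integral-closure identity I would apply the valuative criterion. The functions $v_1,\ldots,v_t$ are exactly the Rees valuations of $I_t(X)$, with $v_j(I_t(X))=t-j+1$; hence a standard monomial with $\gamma_j(\D)\gs (t-j+1)s$ for all $j$ satisfies $v(\D)\gs s\,v(I_t(X))$ for every Rees valuation $v$ of $I_t(X)$, and so lies in $\overline{I_t(X)^s}$. Combined with the previous paragraph this gives $\overline{I_t(X)^s}=\bigcap_{j=1}^t I_j(X)^{((t-j+1)s)}$ in arbitrary characteristic. Identifying the $v_j$ as the full set of Rees valuations is a genuine input here, and is the crux of this half.

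It then remains only to upgrade $I_t(X)^s\subseteq\bigcap_j I_j(X)^{((t-j+1)s)}$ to an equality under the characteristic hypothesis, i.e. to prove that $I_t(X)^s$ already contains every standard monomial with $\gamma_j(\D)\gs (t-j+1)s$, equivalently that $I_t(X)^s$ is integrally closed. In characteristic $0$ I would decompose $I_t(X)^s$ into irreducible $\GL_m\times\GL_n$-representations and match the shapes that occur against the combinatorial condition above; the hypothesis $\chara K>\min\{t,m-t,n-t\}$ is precisely what keeps this representation-theoretic (straightening-relation) bookkeeping valid, or equivalently what guarantees normality of the powers. This is the main obstacle and the reason the two statements differ: the reverse inclusion for \emph{ordinary} powers is genuinely characteristic-sensitive, since in bad characteristic standard monomials can be expelled from $I_t(X)^s$ while remaining in its integral closure, so only the integral-closure identity survives without restriction.
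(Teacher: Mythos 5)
The paper itself gives no proof of this theorem: it is quoted verbatim from \cite[Theorems~1.1 and~1.3]{Bruns}, and the paper's own proof of the symmetric analog (Theorem~\ref{thm1_symmetric}) tells you what the real argument consists of, namely the explicit characteristic-free straightening identities of \cite[Lemmas~1.4 and~1.5]{Bruns}. Your setup agrees with that source's starting point and is correct as far as it goes: by Proposition~\ref{thm2_generic} the right-hand side is spanned by the standard monomials whose shapes satisfy $\gamma_j(\D)\gs (t-j+1)s$ for $j=1,\ldots,t$, and the inclusion $I_t(X)^s\subseteq\bigcap_{j=1}^t I_j(X)^{((t-j+1)s)}$ holds in every characteristic (even more simply than via straightening monotonicity: each $t$-minor lies in $I_j(X)^{(t-j+1)}$ and symbolic powers multiply, $I_j^{(a)}I_j^{(b)}\subseteq I_j^{(a+b)}$). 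The claim that the right-hand side is integrally closed is also true, though your justification leans on $\gamma_j$ being a genuine valuation (additive, not merely superadditive), which is itself a nontrivial theorem; it is cleaner to note that symbolic powers of a prime $P$ in a regular ring are contractions of $(PK[X]_P)^r$, hence integrally closed.

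The genuine gap is that both reverse inclusions --- which carry all the content of the theorem --- are assumed rather than proved. For the integral-closure half, the step ``$v_1,\dots,v_t$ are exactly the Rees valuations of $I_t(X)$ with $v_j(I_t(X))=t-j+1$'' is, by the valuative characterization of integral closure, \emph{equivalent} to the formula $\overline{I_t(X)^s}=\bigcap_{j=1}^t I_j(X)^{((t-j+1)s)}$ for all $s$; invoking it is circular, and you supply neither an independent proof nor a reference. Any actual proof must do the work you are skipping: exhibit, for each standard monomial whose shape satisfies the inequalities but which is not visibly a product of $t$-minors --- e.g.\ for $t=2$, $s=2$, the shape $(3,1)$ --- an equation of integral dependence over $I_t(X)^s$, which in \cite{Bruns} is done by showing a suitable power of such a monomial lies in $I_t(X)^{sk}$ using the straightening identities. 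For the ordinary-power half, what you offer is a plan, not a proof: the decomposition into irreducible $\GL_m\times\GL_n$-representations is the De Concini--Eisenbud--Procesi argument \cite{DEP} and is available only in characteristic $0$, where the group is linearly reductive; for positive characteristic greater than $\min\{t,m-t,n-t\}$ no such decomposition exists, and the assertion that this hypothesis ``is precisely what keeps the bookkeeping valid'' is exactly the statement that needs proving --- in \cite{Bruns} the hypothesis enters concretely through the non-vanishing in $K$ of the integer coefficients occurring in the straightening relations.
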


\subsection{The symmetric case.} Let $Y=(y_{ij})$ be an $n\times n$ symmetric matrix (meaning ${y_{ij}=y_{ji}}$) whose entries are indeterminates over $K$. A standard monomial theory is available also in this situation, and is useful for studying the prime ideal $I_t(Y)\subseteq K[Y]$ defining the locus of symmetric matrices of rank at most $t-1$: The minors are denoted as before. A minor $[i_1,\dots,i_k|j_1,\dots,j_k]$ is called a {\it doset minor} if $i_q\ls j_q$ for all $q=1,\ldots ,k$. In the symmetric case the doset minors are the only relevant ones (the doset minors of size $t$ are already enough to generate $I_t(Y)$), and they naturally form a poset by the following order:	

\begin{align*}
[i_1,\dots,i_k|j_1,\dots,j_k]\ls [u_1,\dots,u_h|v_1,\dots,v_h] \ \iff \\
k\gs h, \ j_q\ls u_q \ \forall \ q\in\{1,\ldots ,h\}.
\end{align*}

Again, a product of doset minors $\D =\delta_1 \cdots \delta_p$ is a standard monomial if $\delta_1\ls \cdots \ls \delta_p$, and the standard monomials form a $K$-basis of $K[Y]$. If the shape of $\D$, defined as before, is the vector $(a_1,\ldots ,a_p)\in \NN^p$, similarly to the generic case we have:

\begin{prop}\label{thm2_symmetric}
With the above notation we have:
\[\D\in I_t(Y)^{(r)}\iff a_i\ls n \ \forall \ i \mbox{ \ \ and \ \ }\sum_{i=1}^p\max\{0,a_i-t+1\}\gs r\,.\]
\end{prop}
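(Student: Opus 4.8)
The plan is to reduce the statement to a single order-of-vanishing computation at the generic point of the symmetric determinantal variety, in exact parallel with the generic case. First note that the condition $a_i\ls n$ is automatic, since every minor of the $n\times n$ matrix $Y$ has size at most $n$; thus the real content is the equivalence $\D\in I_t(Y)^{(r)}\iff\gamma(\D)\gs r$, where I write $\gamma(\D)=\sum_{i=1}^p\max\{0,a_i-t+1\}$ for the $t$-weight of the shape. Since $I_t(Y)$ is prime, set $\mathfrak{p}=I_t(Y)$ and recall $\mathfrak{p}^{(r)}=\mathfrak{p}^r K[Y]_\mathfrak{p}\cap K[Y]$; as $\D\in K[Y]$ automatically, membership $\D\in I_t(Y)^{(r)}$ is equivalent to $\D\in\mathfrak{p}^r K[Y]_\mathfrak{p}$.

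The next step is to identify $\mathfrak{p}^r K[Y]_\mathfrak{p}$ with an order filtration. The singular locus of $V(\mathfrak{p})$ is the smaller rank locus $V(I_{t-1}(Y))$, which has strictly larger codimension, so $\mathfrak{p}$ is a smooth point and $K[Y]_\mathfrak{p}$ is a regular local ring. Consequently $\mathfrak{p}^r K[Y]_\mathfrak{p}=\{\,x:\operatorname{ord}_\mathfrak{p}(x)\gs r\,\}$, where $\operatorname{ord}_\mathfrak{p}$ denotes the order in the associated graded ring $\operatorname{gr}_\mathfrak{p}K[Y]_\mathfrak{p}$. Because this associated graded ring is a polynomial ring over the residue field, and in particular a domain, initial forms are multiplicative and hence $\operatorname{ord}_\mathfrak{p}$ is additive on products: $\operatorname{ord}_\mathfrak{p}(\D)=\sum_{i=1}^p\operatorname{ord}_\mathfrak{p}(\delta_i)$. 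It therefore suffices to compute the order of a single doset minor.

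For this I would localize and put $Y$ into block form $\left(\begin{smallmatrix} A & B \\ C & D\end{smallmatrix}\right)$ with $A$ an invertible symmetric $(t-1)\times(t-1)$ block; over $K[Y]_\mathfrak{p}$ the symmetric Schur complement $S=D-CA^{-1}B$ is again symmetric of size $n-t+1$, its upper-triangular entries $S_{kl}$ (for $k\ls l$) form a regular system of parameters generating $\mathfrak{p}K[Y]_\mathfrak{p}$, and each is a unit multiple of a doset $t$-minor. A direct expansion, as in the generic case, shows that the lower-order terms of a doset $a$-minor cancel and that its initial form is a unit multiple of an $(a-t+1)$-minor of $S$, whence $\operatorname{ord}_\mathfrak{p}(\delta)=\max\{0,a-t+1\}$. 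Summing over the factors gives $\operatorname{ord}_\mathfrak{p}(\D)=\gamma(\D)$, and combining with the previous paragraph yields $\D\in I_t(Y)^{(r)}\iff\operatorname{ord}_\mathfrak{p}(\D)\gs r\iff\gamma(\D)\gs r$, as claimed.

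The main obstacle is the single-minor order computation in the third step: one must verify both that a doset $a$-minor lies in $\mathfrak{p}^{a-t+1}$ (order at least $a-t+1$) and that it is \emph{not} in $\mathfrak{p}^{a-t+2}$ (order exactly $a-t+1$), the latter exactness being what forces the $\Longrightarrow$ direction. This requires careful bookkeeping of the doset condition and of the symmetric identification $[i_1,\dots,i_k|j_1,\dots,j_k]=[j_1,\dots,j_k|i_1,\dots,i_k]$ throughout the Schur-complement expansion. It is precisely the symmetric analog of the computation underlying \cite[Theorem~10.4]{BV}, and the combinatorics are organized exactly as for the generic matrix in Proposition~\ref{thm2_generic}.
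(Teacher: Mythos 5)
Your reduction is sound as far as it goes, and it is a genuinely different route from the paper's: the paper never localizes at $\mathfrak{p}=I_t(Y)$, but instead introduces the ideal $J(t,r)$ spanned by all standard monomials whose shape satisfies $\sum_i\max\{0,a_i-t+1\}\geq r$, proves that $y_{nn}$ is a non-zerodivisor modulo $J(t,r)$, and then obtains $J(t,r)=I_t(Y)^{(r)}$ from the localization trick and induction on $t$ --- everything happens inside the straightening-law formalism. Your observations that $K[Y]_\mathfrak{p}$ is regular (which, incidentally, is automatic because $K[Y]$ is a polynomial ring; smoothness of $V(\mathfrak{p})$ at its generic point is not the relevant fact), that order is therefore additive on products, and that the proposition is thereby equivalent to the single statement that every doset $a$-minor has $\operatorname{ord}_\mathfrak{p}$ equal to $\max\{0,a-t+1\}$, are all correct. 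The inequality $\operatorname{ord}_\mathfrak{p}(\delta)\geq a-t+1$ also goes through: writing $Y=Y_0+\widetilde S$ with $Y_0$ of rank $t-1$ and expanding any minor multilinearly, every term with at least $t$ columns taken from $Y_0$ vanishes. Together with additivity this gives the direction $\Longleftarrow$ of the proposition.

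The gap is in the step you yourself flag as the main obstacle, and it is not a matter of bookkeeping: the proposed mechanism for the reverse inequality is wrong. The initial form of a doset $a$-minor is a unit multiple of an $(a-t+1)$-minor of $S$ only when the minor contains the pivot block, where this is the exact Schur--Jacobi identity. A doset minor need not contain --- or even admit --- a principal $(t-1)\times(t-1)$ submatrix: for $t=2$ the doset minor $[1,2|3,4]$ of the symmetric $4\times 4$ matrix has disjoint row and column sets. In general the expansion yields a sum of many degree-$(a-t+1)$ terms with residue-field coefficients; already for $[2,3|2,3]$ in the $3\times 3$ case with pivot $y_{11}$ one gets $y_{11}^{-1}\left(y_{13}^2S_{22}-2y_{12}y_{13}S_{23}+y_{12}^2S_{33}\right)$, which is not a unit times any single minor of $S$. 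So what must be proved is that these sums never cancel to zero, for every doset minor and over every field --- and that non-cancellation is exactly the content of the implication $\Longrightarrow$. Appealing to ``the generic case'' does not close this: the generic statement (Proposition~\ref{thm2_generic}, i.e.\ \cite[Theorem~10.4]{BV}) is proved in the literature by the straightening law together with the localization induction --- the paper's method --- not by a direct Schur-complement expansion; indeed, the reason those proofs work with ideals spanned by standard monomials rather than with individual minors is precisely that an individual minor becomes a sum of products of minors under this transformation, and the straightening law is what controls the possible cancellations. As written, the decisive step of your argument is missing, and the justification offered for it does not hold.
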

\begin{proof}
Arguing as in the proof of \cite[Lemma~10.3]{BV}, one shows that $y_{nn}$ is a non-zerodivisor for $K[Y]/J(t,r)$, where $J(t,r)$ is the ideal generated by the products of doset minors whose shape $(a_1,\ldots ,a_p)$ satisfies the condition ${\sum_{i=1}^p\max\{0,a_i-t+1\}\gs r}$. At this point we can exploit the ``localization trick" at $y_{nn}$ and apply the induction on $t$ to show $J(t,r)=I_t(Y)^{(r)}$, as explained before \cite[Lemma~10.3]{BV}.
\end{proof}

Also in the symmetric case we have a nice primary decomposition for the powers, namely:

\begin{thm}\label{thm1_symmetric}
If $\chara(K)=0$ or $>\min\{t,n-t\}$, then $I_t(Y)^s$ has primary decomposition:
\[I_t(Y)^s=\bigcap_{j=1}^t I_j(Y)^{((t-j+1)s)}\,.\]
In arbitrary characteristic such a decomposition holds true for integral closures:
\[\overline{I_t(Y)^s}=\bigcap_{j=1}^t I_j(Y)^{((t-j+1)s)}\,.\]
\end{thm}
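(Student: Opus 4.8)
The plan is to follow the blueprint laid out in the discussion immediately preceding the symmetric standard monomial theory, namely to reduce the symmetric case to the generic case treated in Theorem~\ref{thm1_generic}, using Proposition~\ref{thm2_symmetric} as the shape-theoretic engine. The statement to be proved is a decomposition of $I_t(Y)^s$ (or its integral closure) into an intersection of symbolic powers of the smaller determinantal ideals $I_j(Y)$, and the key observation is that both the ordinary powers and the symbolic powers can be controlled purely through the shapes of the standard monomials of $K[Y]$. First I would fix the $K$-basis of $K[Y]$ given by standard monomials in the doset minors and record that, by Proposition~\ref{thm2_symmetric}, membership of a standard monomial $\Delta$ of shape $(a_1,\dots,a_p)$ in the symbolic power $I_j(Y)^{(r)}$ is equivalent to the numerical condition $\sum_{i=1}^p \max\{0,a_i-j+1\}\gs r$ (together with $a_i\ls n$). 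Thus the right-hand side $\bigcap_{j=1}^t I_j(Y)^{((t-j+1)s)}$ is the $K$-span of exactly those standard monomials whose shape satisfies
\[
\sum_{i=1}^p \max\{0,a_i-j+1\}\gs (t-j+1)s \quad \text{for all } j=1,\dots,t\,.
\]

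Next I would analyze this simultaneous system of inequalities combinatorially and show that it is equivalent to the single condition characterizing membership in (the integral closure of) $I_t(Y)^s$. The inclusion $\overline{I_t(Y)^s}\subseteq\bigcap_{j=1}^t I_j(Y)^{((t-j+1)s)}$ is the easy direction: a product of $s$ many $t$-minors has shape with each $a_i\ls$ (the relevant size), and each factor contributes at least $(t-j+1)$ to the $j$-th sum, so the system holds; since each $I_j(Y)^{((t-j+1)s)}$ is integrally closed (being a symbolic power of a prime, hence integrally closed in this determinantal setting), the integral closure is contained in the intersection as well. The reverse inclusion is the crux: I must show that any standard monomial whose shape satisfies all $t$ inequalities actually lies in $\overline{I_t(Y)^s}$ (respectively in $I_t(Y)^s$ under the characteristic hypothesis). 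Here the strategy, exactly as signalled for the generic case in \cite{Bruns} and for the skew case in \cite{De}, is to translate the whole problem to a statement about shapes and then invoke the fact that the standard monomials of a given shape generate the corresponding power up to the measured discrepancy.

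The technical mechanism I would use is the one indicated in the proof of Proposition~\ref{thm2_symmetric}: the element $y_{nn}$ is a non-zerodivisor on $K[Y]/J(t,r)$, so one may perform the ``localization trick'' at $y_{nn}$, where the symmetric determinantal structure degenerates to a generic determinantal structure of smaller size, and then run an induction on $t$ (and on $n$). Under this localization the symmetric-matrix minors become, essentially, generic-matrix minors, so the already-established generic decomposition of Theorem~\ref{thm1_generic} can be transported back. Concretely, after inverting $y_{nn}$ one obtains an isomorphism identifying the relevant quotient with a polynomial extension of a generic determinantal ring, the ordinary and symbolic powers match up on both sides, and the characteristic hypothesis $\chara(K)=0$ or $>\min\{t,n-t\}$ is precisely what is needed to apply the generic-case primary decomposition (which carries the same hypothesis $>\min\{t,m-t,n-t\}$, specializing to $>\min\{t,n-t\}$ when $m=n$). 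The integral-closure statement in arbitrary characteristic then follows by the same localization, using only the characteristic-free part of the generic result.

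The main obstacle I anticipate is verifying that the localization at $y_{nn}$ genuinely reduces the symmetric case to the generic case in a way that respects \emph{both} the ordinary powers and their intersection pattern, not merely the symbolic ones. Proposition~\ref{thm2_symmetric} already handles symbolic powers through shapes, but the assertion $I_t(Y)^s=\bigcap_j I_j(Y)^{((t-j+1)s)}$ for ordinary powers is genuinely a characteristic-dependent statement: the symbolic and ordinary powers of $I_t(Y)$ need not coincide, and the equality of the intersection with the ordinary power is exactly where representation-theoretic (or standard-monomial straightening) input is required. I therefore expect the delicate step to be checking that the straightening relations in the symmetric setting behave, after localizing at $y_{nn}$, identically to the generic straightening relations used by Bruns, so that the counting of standard monomials of prescribed shape — and hence the equality of Hilbert functions of the two sides — goes through verbatim. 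Since the authors explicitly state that ``this case is completely analogous to (i),'' the expectation is that once the localization isomorphism is set up correctly, the remainder is a faithful transcription of the generic argument, with the shape inequalities supplied by Proposition~\ref{thm2_symmetric} doing the bookkeeping.
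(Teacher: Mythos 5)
Your identification of the crux (showing that every standard monomial of admissible shape lies in $I_t(Y)^s$, resp.\ in $\overline{I_t(Y)^s}$) is correct, but the mechanism you propose for it --- the ``localization trick'' at $y_{nn}$ --- cannot close the argument, for two reasons. First, a factual one: inverting $y_{nn}$ and performing symmetric row/column operations identifies $K[Y][y_{nn}^{-1}]$ with a polynomial extension of $K[Y']$ for a smaller \emph{symmetric} matrix of indeterminates $Y'$, with $I_t(Y)$ corresponding to $I_{t-1}(Y')$; the symmetric structure is preserved, so the localization does not transport the problem to the generic case of Theorem~\ref{thm1_generic}, only to a smaller symmetric case. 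Second, and fatally: the localization trick proves an equality of ideals only when both ideals are contracted from the localization, i.e., when $y_{nn}$ is a non-zerodivisor modulo each of them. This is exactly why it works in Proposition~\ref{thm2_symmetric}, where the ideals are $I_t(Y)$-primary and $y_{nn}\notin I_t(Y)$. Here, however, the $j=1$ term of the intersection is $I_1(Y)^{(ts)}=I_1(Y)^{ts}$, a power of the ideal of all the variables, and correspondingly $I_t(Y)^s$ and $\overline{I_t(Y)^s}$ have an $I_1(Y)$-primary component. Since $y_{nn}\in I_1(Y)$ (indeed every entry lies in $I_1(Y)$), inverting any entry kills precisely these components, so the localization argument only shows that the two sides agree up to saturation, i.e., away from the origin. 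Pinning down the $I_1(Y)$-primary component --- equivalently, showing that an admissible standard monomial of degree $\gs ts$ genuinely lies in $I_t(Y)^s$ rather than merely in its saturation --- is the actual content of the theorem, and no induction via localization can see it; trying to supply it from the saturation statement in Theorem~\ref{thm:j=te}(iii) would be circular, since the paper derives that property \emph{from} the present theorem.

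What fills this gap in the paper's (terse) proof is explicit straightening/integral-equation input: for ordinary powers in non-exceptional characteristic one runs the argument of \cite[Corollary~10.13]{BV} in the symmetric setting, and for integral closures in arbitrary characteristic one specializes to symmetric matrices the equations of \cite[Lemma~1.4]{Bruns} and \cite[Lemma~1.5]{Bruns}, which exhibit each standard monomial of admissible shape as satisfying a monic equation over $I_t(Y)^s$. A smaller point: your easy inclusion quietly uses that the symbolic powers $I_j(Y)^{(r)}$ are integrally closed; this is true, but it follows from their description as shape (valuation) ideals, not from the mere fact that they are symbolic powers of a prime.
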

\begin{proof}
The first part can be derived as in \cite[Corollary~10.13]{BV}, the second as in the proof of \cite[Theorem~1.3]{Bruns} (it is enough to specialize to symmetric matrices the equations of \cite[Lemma~1.4]{Bruns} and \cite[Lemma~1.5]{Bruns}).
\end{proof}

As in the generic case, we can identify standard monomials with Young tableaux. In this case however, the restriction to doset minors and the more stringent partial ordering make the appropriate object to identify with a standard monomial a single tableau, as opposed to a pair in the generic case. Specifically, if a standard monomial $\D = \delta_1\delta_2 \cdots \delta_p$ has shape $(a_1,a_2,\ldots ,a_p)$, we associate a diagram $D$ of shape $(a_1, a_1, a_2, a_2, \dots, a_p, a_p)$. The rows of $\delta_i$ are listed in row $2i-1$ of $D$, and the columns of $\delta_i$ are listed in row $2i$ of $D$. For example,

\[
{\setlength{\unitlength}{0.8mm}
\begin{picture}(30,15)(-5,5)

\put(-30,20){\line(1,0){15}} \put(-30,15){\line(1,0){15}}
\put(-30,10){\line(1,0){15}} \put(-30, 5) {\line(1,0){5}}
\put(-30, 0) {\line(1,0){5}}

\put(-30,20){\line(0,-1){20}} \put(-25,20){\line(0,-1){20}}
\put(-20,20){\line(0,-1){10}} \put(-15,20){\line(0,-1){10}}

\put(-28.5,16){$1$} \put(-23.5,16){$3$} \put(-18.5,16){$4$} 
\put(-28.5,11){$2$} \put(-23.5,11){$4$} \put(-18.5,11){$5$}
\put(-28.5, 6){$2$} 
\put(-28.5, 1){$4$} 

\put(0,10){$\leftrightarrow$}

\put(15,10){$[1,3,4|2,4,5] \cdot [2|4]\,.$}

\end{picture}}
\]

\

\subsection{The alternating case.} Now let $Z=(z_{ij})$ be an $n\times n$ skew-symmetric matrix ($z_{ij}=-z_{ji}$) whose nonzero entries are indeterminates over $K$. Here the situation is a bit different by the previous ones, since the ideal $I_t(Z)$ is not radical. This is because any minor of the form $[i_1,\ldots ,i_{2k}|i_1,\ldots ,i_{2k}]$ is a square of a polynomial, namely its {\it pfaffian}, which we will simply denote by $[i_1,\ldots, i_{2k}]$. Indeed the locus of the alternating matrices of rank at most $2t-2$ is the same as the locus of the alternating matrices of rank at most $2t-1$, which is defined by the prime ideal $P_{2t}(Z)\subseteq K[Z]$ generated by all the $2t$-pfaffians $[i_1,\ldots ,i_{2t}]$. We can equip the set of pfaffians with the following partial order: 
\begin{align*}
[i_1,\dots,i_{2k}]\ls [u_1,\dots,u_{2h}] \ \iff \\
k\gs h, \ i_q\ls u_q \ \forall \ q\in\{1,\ldots ,2h\}\,.
\end{align*}

Once again, a product of pfaffians $\D =\delta_1 \cdots \delta_p$ is a standard monomial if $\delta_1\ls \cdots \ls \delta_p$, and these standard monomials are a $K$-basis of $K[Z]$. The shape of $\D$ will be the vector $(a_1,\ldots ,a_p)\in \NN^p$ where $\delta_i$ is a $2a_i$-pfaffian. By \cite[Theorem~2.1]{De} we have:

\begin{prop}\label{thm2_pfaffians}
With the above notation:
\[\D\in P_{2t}(Z)^{(r)}\iff a_i\ls \lfloor n/2\rfloor \ \forall \ i \mbox{ \ \ and \ \ }\sum_{i=1}^p\max\{0,a_i-t+1\}\gs r\,.\]
\end{prop}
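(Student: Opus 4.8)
The plan is to mirror the strategy used for the generic case in \cite{BV} and for the symmetric case in Proposition~\ref{thm2_symmetric}, substituting pfaffians for minors and invoking \cite[Theorem~2.1]{De} for the technical core. Write $R = K[Z]$, $P = P_{2t}(Z)$, and let $J(t,r) \subseteq R$ be the ideal generated by all products of pfaffians whose shape $(a_1,\ldots,a_p)$ satisfies $\sum_{i=1}^p \max\{0, a_i - t + 1\} \gs r$. The assertion is that $J(t,r) = P^{(r)}$, the constraint $a_i \ls \lfloor n/2 \rfloor$ being automatic since a $2a_i$-pfaffian of an $n \times n$ matrix forces $2a_i \ls n$. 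I would establish the two inclusions separately.

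For $J(t,r) \subseteq P^{(r)}$ I would first record the fact that a single $2k$-pfaffian $[i_1,\ldots,i_{2k}]$ with $k \gs t$ lies in $P^{(k-t+1)}$. Granting this, since symbolic powers of a prime satisfy $P^{(a)}P^{(b)} \subseteq P^{(a+b)}$ --- because $P^{(a)} = P^a R_P \cap R$ and $(P^a R_P)(P^b R_P) = P^{a+b} R_P$ --- any product $\delta_1 \cdots \delta_p$ of pfaffians of sizes $2a_i$ lands in $P^{(\sum \max\{0, a_i - t + 1\})} \subseteq P^{(r)}$; the factors with $a_i < t$ contribute nothing to the exponent but, lying in $R$, keep the product inside the ideal $P^{(r)}$.

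The reverse inclusion $P^{(r)} \subseteq J(t,r)$ is the substantive part, and here I would run the localization trick as in the passage preceding \cite[Lemma~10.3]{BV}. Using that the standard monomials form a $K$-basis and that the pfaffian straightening law respects the shape filtration, one checks that $J(t,r)$ has a $K$-basis of standard monomials of admissible shape. The decisive step is to show that the maximal $2$-pfaffian $[n-1,n] = z_{n-1,n}$ is a non-zerodivisor on $R/J(t,r)$; this permits localizing at $z_{n-1,n}$, where the standard skew-symmetric change of coordinates splits off the hyperbolic plane on the last two indices and presents $R[z_{n-1,n}^{-1}]$ as a localized polynomial extension of the coordinate ring $K[Z']$ of an $(n-2) \times (n-2)$ generic skew-symmetric matrix $Z'$. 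Under this reduction $P_{2t}(Z)$ matches $P_{2(t-1)}(Z')$, and a pfaffian of size $2k$ meeting $\{n-1,n\}$ drops to size $2(k-1)$ while its contribution $\max\{0, k-t+1\} = \max\{0,(k-1)-(t-1)+1\}$ is unchanged; so induction on $t$, with base case $t=1$ where $P_2(Z)$ is generated by the entries $z_{ij}$ and symbolic powers agree with ordinary powers, closes the argument.

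I expect the principal obstacle to be the verification that $z_{n-1,n}$ is a non-zerodivisor modulo $J(t,r)$ and that the localization faithfully matches the symbolic power of the smaller pfaffian ideal with the shifted indices. Both rest on a delicate reading of the pfaffian straightening relations and of the invariance of $\sum_i \max\{0, a_i - t + 1\}$ under the coordinate change --- exactly the compatibilities packaged in \cite[Theorem~2.1]{De}, which I would cite rather than re-derive.
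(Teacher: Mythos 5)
The paper does not actually prove this proposition: it is quoted directly from \cite[Theorem~2.1]{De}, just as its generic-case analogue (Proposition~\ref{thm2_generic}) is quoted from \cite[Theorem~10.4]{BV}; only the symmetric case (Proposition~\ref{thm2_symmetric}), for which no reference was available, receives an argument in the paper. Your overall strategy --- the inclusion $J(t,r)\subseteq P_{2t}(Z)^{(r)}$ via multiplicativity of symbolic powers of a prime together with the fact that a single $2k$-pfaffian lies in $P_{2t}(Z)^{(k-t+1)}$, and the reverse inclusion by showing that the maximal pfaffian $z_{n-1,n}=[n-1,n]$ is a non-zerodivisor modulo $J(t,r)$, localizing to split off a hyperbolic plane and reduce to an $(n-2)\times(n-2)$ skew-symmetric matrix, then inducting on $t$ --- is exactly the template the paper itself uses in the symmetric case, so the plan is sound and is presumably close to De Negri's own proof.

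The genuine gap is the circularity in your final paragraph. You discharge the two decisive steps --- that $z_{n-1,n}$ is a non-zerodivisor on $K[Z]/J(t,r)$, and that the localization identifies $J(t,r)$ with the corresponding shape ideal for $Z'$ --- by citing \cite[Theorem~2.1]{De}. But that theorem is not a package of straightening compatibilities; it \emph{is} the statement being proved, precisely what the paper cites for the whole proposition. As written, your argument assumes its conclusion at the crucial point. To repair it you must either cite \cite[Theorem~2.1]{De} for the proposition outright and discard the scaffolding (which is what the paper does), or prove the non-zerodivisor claim honestly from the pfaffian straightening law --- the analogue of the argument preceding \cite[Lemma~10.3]{BV}, which is what the paper sketches in the symmetric case and which constitutes the real mathematical content here. (The auxiliary fact that a $2k$-pfaffian lies in $P_{2t}(Z)^{(k-t+1)}$ likewise needs a proof or an independent citation, though it is standard via pfaffian Laplace expansion.)
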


Furthermore, by \cite[Proposition~2.6]{De} (for the second part of the statement see the comment below \cite[Proposition~2.6]{De}):

\begin{thm}\label{thm1_pfaffians}
If $\chara(K)=0$ or $>\min\{2t,n-2t\}$, then $P_{2t}(Z)^s$ has primary decomposition:
\[P_{2t}(Z)^s=\bigcap_{j=r}^t P_{2j}(Z)^{((t-j+1)s)} \ \ \mbox{ where } \ \ r=\max\{1,\lfloor n/2\rfloor - s(\lfloor n/2\rfloor -t)\}\,.\]
In arbitrary characteristic such a decomposition holds true for integral closures:
\[\overline{P_{2t}(Z)^s}=\bigcap_{j=r}^t P_{2j}(Z)^{((t-j+1)s)} \ \ \mbox{ where } \ \ r=\max\{1,\lfloor n/2\rfloor - s(\lfloor n/2\rfloor -t)\}\,.\]\end{thm}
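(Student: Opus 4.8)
The plan is to establish the integral-closure identity in arbitrary characteristic first, and then to upgrade it to the asserted primary decomposition of $P_{2t}(Z)^s$ under the characteristic hypothesis, mirroring the route taken for the generic matrix in \cite{Bruns} and summarized in Theorem~\ref{thm1_generic}. The guiding idea is that every ideal in sight is spanned by standard monomials and is therefore determined by the \emph{shapes} of the standard monomials it contains: this is exactly the content of Proposition~\ref{thm2_pfaffians} for a symbolic power, it passes to the intersection $\bigcap_{j=r}^t P_{2j}(Z)^{((t-j+1)s)}$ because intersecting shape-spanned subspaces is again shape-spanned, and it holds for the integral closure since that is a graded ideal compatible with the straightening law. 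For $1\ls j\ls t$ I introduce the shape functional $v_j(\D)=\sum_{i=1}^p\max\{0,a_i-j+1\}$ attached to a standard monomial $\D$ of shape $(a_1,\dots,a_p)$; it measures the order of vanishing along the pfaffian locus $V(P_{2j}(Z))$, the standard monomial theory shows it to be (induced by) a divisorial valuation, and Proposition~\ref{thm2_pfaffians} rereads as $P_{2j}(Z)^{(m)}=\{f:v_j(f)\gs m\}$.

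For the inclusion $\overline{P_{2t}(Z)^s}\subseteq \bigcap_{j=r}^t P_{2j}(Z)^{((t-j+1)s)}$ I would argue as follows. Each $P_{2j}(Z)^{(m)}$ is integrally closed, being the valuation ideal $\{f:v_j(f)\gs m\}$ of the single valuation $v_j$. A $2t$-pfaffian $\delta$ has shape $(t)$, so $v_j(\delta)=t-j+1$; hence any product of $s$ such pfaffians lies in $P_{2j}(Z)^{((t-j+1)s)}$, giving $P_{2t}(Z)^s\subseteq P_{2j}(Z)^{((t-j+1)s)}$ for each $j$. Taking integral closures (the right-hand sides being already closed) and intersecting yields the inclusion over all $j$. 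A short arithmetic check then shows that, once the cap $a_i\ls \lfloor n/2\rfloor$ is imposed, the inequalities $v_j\gs (t-j+1)s$ for $r\ls j\ls t$ force those for $1\ls j<r$; thus the factors with $j<r$ are redundant and may be dropped. This redundancy is precisely what pins down the lower index $r=\max\{1,\lfloor n/2\rfloor-s(\lfloor n/2\rfloor-t)\}$, and it is the feature distinguishing the pfaffian case from the generic and symmetric ones, where the intersection runs over all $1\ls j\ls t$.

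The reverse inclusion $\bigcap_{j=r}^t P_{2j}(Z)^{((t-j+1)s)}\subseteq \overline{P_{2t}(Z)^s}$ is the main obstacle: the easy inclusion only shows that $P_{2t}(Z)^s$ sits inside the intersection, whereas now one must prove that the valuations $v_r,\dots,v_t$ already account for the whole integral closure. Concretely, I would take a standard monomial $\D$ whose shape satisfies $v_j(\D)\gs (t-j+1)s$ for all $r\ls j\ls t$ and exhibit an integer $N$ with $\D^N\in P_{2t}(Z)^{sN}$, which forces $\D\in\overline{P_{2t}(Z)^s}$ since $\D$ then satisfies the monic equation $T^N-\D^N=0$ whose constant term $-\D^N$ lies in $(P_{2t}(Z)^s)^N$. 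Since $\D^N$ lies in $\bigcap_j P_{2j}(Z)^{((t-j+1)sN)}$ by superadditivity of the $v_j$, the task reduces to the purely combinatorial statement that a shape meeting these inequalities is realized by an honest product of $2t$-pfaffians after straightening. This is the skew-symmetric analogue of the Bruns-type identities of \cite{Bruns}: one specializes the explicit straightening relations to skew-symmetric entries, and their integer (denominator-free) coefficients make the realization work in every characteristic once one is allowed to pass to the power $\D^N$. Producing and controlling these identities is the technical heart, carried out in \cite[Proposition~2.6]{De}.

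Finally, to obtain the sharper identity $P_{2t}(Z)^s=\bigcap_{j=r}^t P_{2j}(Z)^{((t-j+1)s)}$ when $\chara(K)=0$ or $\chara(K)>\min\{2t,n-2t\}$, it suffices---given the integral-closure identity---to show that $P_{2t}(Z)^s$ is already integrally closed, i.e.\ that the combinatorial realization of the previous paragraph holds \emph{without} passing to a power. In characteristic zero this is the representation-theoretic decomposition of Abeasis--Del Fra \cite{AD}, which matches the isotypic components of $P_{2t}(Z)^s$ with those of the intersection; the bound $\chara(K)>\min\{2t,n-2t\}$ is exactly what keeps the relevant straightening coefficients (equivalently the pertinent plethysm multiplicities) invertible, so the identification descends from characteristic zero by a base-change/semicontinuity argument. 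The genuine difficulty throughout remains the reverse inclusion of the integral-closure statement; once that combinatorial realization is in hand, the characteristic-sensitive refinement is comparatively formal.
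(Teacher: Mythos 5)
Your proposal is correct and takes essentially the same route as the paper: the paper gives no independent argument for this theorem, stating it as a known result by citing \cite[Proposition~2.6]{De} for the characteristic-dependent primary decomposition and the comment following that proposition for the integral-closure statement, and your outline (symbolic powers as integrally closed shape/valuation ideals, the easy inclusion, the integrality check showing the factors with $j<r$ are redundant, and deferral of the hard combinatorial realization to De Negri's straightening identities) is precisely the strategy underlying that citation. The technical heart you defer to \cite[Proposition~2.6]{De} is the same step the paper defers, so the two treatments coincide.
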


As above, we associate to each pfaffian a tableau: to a pfaffian of shape $(a_1,\ldots ,a_p)$ we associate the tableau of shape $(2 a_1,\ldots ,2 a_p)$ in which the $i^{\text{th}}$ row lists the rows of the $i^{\text{th}}$ minor.

\section{Counting Young Tableaux}\label{sec:count}

\begin{prop}[(The Diagram Criterion, Generic Case)]\label{diagram_generic} Let $X$ be a matrix of generic indeterminates. A standard monomial of shape $(a_1,\dots,a_p)$ belongs to 
\[(\overline{I_t(X)^{s+1}})^{\sat} \setminus \overline{I_t(X)^{s+1}}\]
if and only if its shape satisfies the conditions
\begin{itemize}
\item[1)] $a_i\ls m$ for all $i$,
\item[2a)] $\sum{a_i} < t(s+1)$, and
\item[3)] $p \ls \sum{a_i} - (t-1)(s+1)$.
\end{itemize}
A standard monomial belongs to 
\[\big((\overline{I_t(X)^{s+1}})^{\sat} \cap \overline{I_t(X)^s}\,\big) \setminus \overline{I_t(X)^{s+1}}\]
if and only if its shape satisfies the conditions (1) \& (3) above plus the condition
\begin{itemize}
\item[2b)] $ts \ls \sum{a_i} < t(s+1)$.
\end{itemize}
\end{prop}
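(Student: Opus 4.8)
The plan is to reduce every membership question to a condition on the shape $(a_1,\dots,a_p)$, exploiting the primary decomposition of Theorem~\ref{thm1_generic} and the shape criterion of Proposition~\ref{thm2_generic}, and then to collapse the resulting family of inequalities into the single condition (3) by a convexity argument. First I would note that all the ideals involved are spanned by standard monomials, so it suffices to decide membership of a standard monomial $\D$ of shape $(a_1,\dots,a_p)$, which I may assume weakly decreasing, $a_1\gs\cdots\gs a_p\gs 1$. By Theorem~\ref{thm1_generic},
\[\overline{I_t(X)^{s+1}}=\bigcap_{j=1}^t I_j(X)^{((t-j+1)(s+1))}\,.\]
Since $I_1(X)=\mm$ is the irrelevant maximal ideal while, for $j\gs 2$, each $I_j(X)$ is a prime strictly contained in $\mm$, the unique $\mm$-primary component is the $j=1$ one. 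As $(J:\mm^{\infty})$ is the intersection of exactly those primary components whose radical is not $\mm$, saturation drops that component:
\[(\overline{I_t(X)^{s+1}})^{\sat}=\bigcap_{j=2}^t I_j(X)^{((t-j+1)(s+1))}\,.\]

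Applying Proposition~\ref{thm2_generic} to each factor, I obtain that $\D$ lies in the saturation if and only if $a_i\ls m$ for all $i$ (condition (1)) together with the inequalities
\[f(j):=\sum_{i=1}^p\max\{0,a_i-j+1\}\gs (t-j+1)(s+1)\qquad (j=2,\dots,t)\,,\]
which I will call $(\ast)$; whereas $\D\in\overline{I_t(X)^{s+1}}$ requires in addition the $j=1$ instance $f(1)=\sum a_i\gs t(s+1)$. Hence $\D$ lies in $(\overline{I_t(X)^{s+1}})^{\sat}\setminus\overline{I_t(X)^{s+1}}$ if and only if (1) and $(\ast)$ hold and $\sum a_i< t(s+1)$, the last inequality being condition (2a). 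Since $a_i\gs1$ gives $f(2)=\sum_i(a_i-1)=\sum a_i-p$, the instance $j=2$ of $(\ast)$ is exactly condition (3).

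It then remains to show that, granting (1) and (2a), the whole family $(\ast)$ is equivalent to its single instance $j=2$; this is the heart of the matter and I expect it to be the main obstacle. The key observation is that $f(j)-f(j+1)=\#\{i:a_i\gs j\}$ is non-increasing in $j$ (it is the $j$-th column length of the diagram), so $f$ is a convex function of $j$, and therefore $h(j):=f(j)-(t-j+1)(s+1)$ is convex as well. Condition (2a) says $h(1)<0$ and condition (3) says $h(2)\gs 0$, so the forward difference $h(2)-h(1)$ is strictly positive; by convexity every later forward difference is at least this large, so $h$ is strictly increasing on $\{1,\dots,t\}$ and $h(j)\gs h(2)\gs 0$ for all $j\gs 2$. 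This recovers $(\ast)$ and proves the first equivalence.

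For the second statement I would intersect further with $\overline{I_t(X)^s}$. By Theorem~\ref{thm1_generic} and Proposition~\ref{thm2_generic}, membership in $\overline{I_t(X)^s}$ amounts to (1) together with $f(j)\gs (t-j+1)s$ for $j=1,\dots,t$. For $j\gs 2$ these are weaker than $(\ast)$ (as $s<s+1$) and hence already hold, so the only genuinely new constraint is the $j=1$ one, $\sum a_i\gs ts$. Adding it to (2a) yields precisely the two-sided bound $ts\ls\sum a_i< t(s+1)$ of condition (2b), while (1) and (3) persist. This characterizes $\big((\overline{I_t(X)^{s+1}})^{\sat}\cap\overline{I_t(X)^s}\big)\setminus\overline{I_t(X)^{s+1}}$ and completes the proof.
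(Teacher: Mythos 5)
Your proof is correct, and its skeleton coincides with the paper's: both arguments feed the primary decomposition of Theorem~\ref{thm1_generic} into the shape criterion of Proposition~\ref{thm2_generic}, identify the saturation with the intersection of the components for $j\gs 2$ (you justify this step via primary decomposition theory; the paper simply asserts the resulting equality of set differences), and reduce the second statement to the first by noting that, in the presence of the saturation inequalities, membership in $\overline{I_t(X)^s}$ adds only the $j=1$ constraint $\sum a_i\gs ts$. The one genuine divergence is in the key combinatorial step, collapsing the family $f(j):=\sum_i\max\{0,a_i-j+1\}\gs (t-j+1)(s+1)$, $j=2,\dots,t$, to its $j=2$ instance. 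The paper rewrites $f(j)$ in terms of the row multiplicities $r_i=|\{\ell : a_\ell=i\}|$, solves each inequality for $p$, and observes that the resulting upper bound on $p$ is minimized at $j=2$ because one of its two fractions is negative (this is exactly where (2a) enters) and the other nonnegative. You instead note that $f(j)-f(j+1)$ is the $j$-th column length of the diagram, hence non-increasing in $j$, so that $h(j)=f(j)-(t-j+1)(s+1)$ is convex; then $h(1)<0\ls h(2)$ forces all later forward differences to be positive, giving $h(j)\gs h(2)\gs 0$ for all $j\gs 2$. Your convexity argument is cleaner and makes transparent why only (2a) and the $j=2$ inequality are needed. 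What the paper's more computational route buys is the explicit formula for $f(j)$ in the variables $(r_1,\dots,r_m)$: that parametrization of shapes is exactly what is reused in Proposition~\ref{W-prop} and in the proof of Theorem~\ref{thm_generic}, where conditions (1), (2b), (3) must be rewritten in terms of the $r_i$ before applying the Integration Lemma, so the paper's computation does double duty.
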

\begin{proof}
By Theorem~\ref{thm1_generic}, we have
\[(\overline{I_t(X)^{s+1}})^{\sat} \setminus \overline{I_t(X)^{s+1}} = \bigcap_{j=2}^t I_j(X)^{((t-j+1)(s+1))}\setminus \bigcap_{j=1}^t I_j(X)^{((t-j+1)(s+1))}\,.\]
Applying Theorem~\ref{thm2_generic}, a standard monomial is in this set if and only if its shape satisfies the conditions
\begin{itemize}
\item[1)] $a_i\ls m$ for all $i$,
\item[2a)] $\sum{a_i} < t(s+1)$, and
\item[3')] $\sum{\max\{0, a_i-j+1\}}\gs (t-j+1)(s+1)$ for all $j=2,\dots,t$.
\end{itemize}
For $j=1,\dots,m$, let $r_j$ denote $|\{i:a_i=j\}|$. We compute
\begin{align*}
\sum_{i=1}^p &\max\{0, a_i-j+1\} =
\sum_{i=1}^{p-(r_1+\cdots+r_{j-1})}{a_i}-(j-1)(p-(r_1+\cdots+r_{j-1}))\\
&=\sum_{i=1}^p a_i - (r_1 + 2 r_2 + \cdots + (j-1) r_{j-1}) - (j-1)(p-(r_1+\cdots+r_{j-1}))\\
&=\sum_{i=1}^p a_i - (j-1)p + (j-2)r_1 + (j-3) r_2 + \cdots + r_{j-2}\,.
\end{align*}
Then,
\[
\sum_{i=1}^p \max\{0, a_i-j+1\} \gs (t-j+1)(s+1) \qquad \text{for all $j=2,\dots,t$}
\]
if and only if
\[
 p \ls s+1 + \min_{2\ls j \ls t} \!{\bigg(\frac{\sum{a_i} - t(s+1)}{j-1} + \frac{(j-2)r_1 + (j-3) r_2 + \cdots + r_{j-2}}{j-1}\bigg)}\,.
\]
Since the first fraction above is negative and the second positive, the minimum is achieved for $j=2$. This shows that condition (3') can be replaced by (3).

For the second statement we note that a standard monomial is in 
\[\big((\overline{I_t(X)^{s+1}})^{\sat} \cap \overline{I_t(X)^s}\,\big) \setminus \overline{I_t(X)^{s+1}}\]
if and only if it is in
\[(\overline{I_t(X)^{s+1}})^{\sat} \setminus \overline{I_t(X)^{s+1}}\]
and in $I_1(X)^{(ts)}$.
\end{proof}

There are nearly identical characterizations in the symmetric and pfaffian cases:

\begin{prop}[(The Diagram Criterion, Symmetric Case)]\label{diagram_symmetric} 
Let $Y$ be a generic symmetric matrix of indeterminates.
A standard monomial $M$ of shape $(a_1, a_2, \dots, a_p)$ belongs to 
\[(\overline{I_t(Y)^{s+1}})^{\sat} \setminus \overline{I_t(Y)^{s+1}}\]
if and only if conditions (1), (2a), and (3) of Proposition~\ref{diagram_generic} hold for 
\[(a_1,\dots,a_{p})\,;\]
a such standard monomial corresponds to a diagram, the shape of which is $(a_1,a_1,a_2,a_2,\dots,a_p,a_p)$. The monomial $M$ belongs to 
\[\big((\overline{I_t(Y)^{s+1}})^{\sat} \cap \overline{I_t(Y)^s}\,\big) \setminus \overline{I_t(Y)^{s+1}}\]
if and only if conditions (1), (2b), and (3) of Proposition~\ref{diagram_generic} hold for the same $(a_1,\dots,a_{p})$.
\end{prop}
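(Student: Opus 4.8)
The plan is to reduce the symmetric case to the generic case by exploiting the structural parallel between Proposition~\ref{thm2_generic} and Proposition~\ref{thm2_symmetric}, together with the identical form of the primary decompositions in Theorem~\ref{thm1_generic} and Theorem~\ref{thm1_symmetric}. First I would observe that the membership criterion of Proposition~\ref{thm2_symmetric} for symmetric matrices reads exactly like that of Proposition~\ref{thm2_generic}, except that the bound $a_i\ls \min\{m,n\}$ is replaced by $a_i \ls n$; and the primary decomposition of $\overline{I_t(Y)^{s+1}}$ has precisely the same shape $\bigcap_{j=1}^t I_j(Y)^{((t-j+1)(s+1))}$ as in the generic case. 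So the entire computation carried out in the proof of Proposition~\ref{diagram_generic} can be repeated verbatim, purely at the level of shapes $(a_1,\dots,a_p)$.

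The key steps, in order, are as follows. I would begin by writing, using Theorem~\ref{thm1_symmetric},
\[
(\overline{I_t(Y)^{s+1}})^{\sat} \setminus \overline{I_t(Y)^{s+1}} = \bigcap_{j=2}^t I_j(Y)^{((t-j+1)(s+1))}\setminus \bigcap_{j=1}^t I_j(Y)^{((t-j+1)(s+1))}\,,
\]
exactly as in the generic case, noting that saturation removes the $j=1$ factor $I_1(Y)^{(t(s+1))}$ (the power of the irrelevant ideal) from the intersection. Applying Proposition~\ref{thm2_symmetric}, a standard monomial lies in this set if and only if its shape satisfies $a_i \ls n$ for all $i$, together with $\sum \max\{0,a_i-j+1\}\gs (t-j+1)(s+1)$ for $j=2,\dots,t$ and the failure of the same inequality at $j=1$, i.e.\ $\sum a_i < t(s+1)$. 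Next I would invoke the purely combinatorial computation from the proof of Proposition~\ref{diagram_generic}: setting $r_j = |\{i : a_i = j\}|$, the same telescoping identity shows that the family of inequalities for $j=2,\dots,t$ collapses, since the minimum over $j$ is achieved at $j=2$, to the single condition $p \ls \sum a_i - (t-1)(s+1)$, which is condition~(3). This yields the first claim, with condition~(1) now being $a_i\ls n$ (subsumed by the generic conditions~(1),~(2a),~(3) applied to $(a_1,\dots,a_p)$). For the second claim I would intersect additionally with $\overline{I_t(Y)^s}$, which by Proposition~\ref{thm2_symmetric} and the $j=1$ term of Theorem~\ref{thm1_symmetric} amounts to membership in $I_1(Y)^{(ts)}$, i.e.\ $\sum a_i \gs ts$; combined with (2a) this gives precisely condition~(2b).

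There is essentially no new obstacle here, and this is the point I would emphasize: the symmetric standard monomial theory has been set up in Section~\ref{sec:standardmon} so that the shape-level statements (Proposition~\ref{thm2_symmetric} and Theorem~\ref{thm1_symmetric}) are formally identical to their generic counterparts. The only genuine difference is bookkeeping at the level of diagrams rather than pairs of tableaux: a symmetric standard monomial of shape $(a_1,\dots,a_p)$ corresponds to a \emph{single} diagram of shape $(a_1,a_1,a_2,a_2,\dots,a_p,a_p)$, so I would take care to state the correspondence in those terms. Because all the conditions (1), (2a)/(2b), (3) are conditions on the unreduced shape $(a_1,\dots,a_p)$ and not on the doubled diagram, the transcription is immediate, and the proof reduces to the single sentence that the computations of Proposition~\ref{diagram_generic} apply \emph{mutatis mutandis}. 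The mild care needed is only to confirm that saturation in the symmetric ambient ring still corresponds to deleting the $j=1$ symbolic power from the decomposition, which follows since $I_1(Y)$ is again the homogeneous maximal ideal of $K[Y]$.
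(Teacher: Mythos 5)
Your proposal is correct and takes essentially the same route as the paper: the paper states this proposition with no separate argument, as a ``nearly identical characterization'' obtained by rerunning the proof of Proposition~\ref{diagram_generic} with Theorem~\ref{thm1_symmetric} and Proposition~\ref{thm2_symmetric} substituted for their generic counterparts, which is exactly what you carry out. Your points of care---that $I_1(Y)$ is again the irrelevant maximal ideal so saturation deletes precisely the $j=1$ symbolic component, that the $j\geqslant 2$ components of $\overline{I_t(Y)^s}$ are implied by those of the saturation so only $\sum a_i \geqslant ts$ is added, and that all conditions are read off the unreduced shape $(a_1,\dots,a_p)$ rather than the doubled diagram---are the correct ones.
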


\begin{prop}[(The Diagram Criterion, pfaffian Case)]\label{diagram_pfaffian} 
Let $Z$ be a generic antisymmetric matrix of indeterminates.
A standard monomial $M$ of shape $(a_1, a_2,\dots, a_p)$ belongs to 
\[(\overline{P_{2t}(Z)^{s+1}})^{\sat} \setminus \overline{P_{2t}(Z)^{s+1}}\]
if and only if conditions (1), (2a), and (3) of Proposition~\ref{diagram_generic} hold for

\[(a_1,a_2,\dots,a_p)\,,\] 
with $m=\lfloor \frac{n}{2}\rfloor $; a such standard monomial represents a diagram of shape $(2a_1,\dots,2a_{p})$. The monomial $M$ belongs to 
\[\big((\overline{P_{2t}(Z)^{s+1}})^{\sat} \cap \overline{P_{2t}(Z)^s}\,\big) \setminus \overline{P_{2t}(Z)^{s+1}}\]
if and only if conditions (1), (2b), and (3) of Proposition~\ref{diagram_generic} hold for the same $(a_1,\dots,a_{p})$ with $m=\lfloor \frac{n}{2}\rfloor$.
\end{prop}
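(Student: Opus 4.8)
The plan is to run the argument for Proposition~\ref{diagram_generic} essentially verbatim, with Theorem~\ref{thm1_pfaffians} playing the role of Theorem~\ref{thm1_generic} and Proposition~\ref{thm2_pfaffians} that of Theorem~\ref{thm2_generic}; throughout I abbreviate $m=\lfloor n/2\rfloor$. The one structural fact to record at the outset is that a $2$-pfaffian $[i_1,i_2]$ is just the entry $z_{i_1i_2}$, so $P_2(Z)=\mm$. Consequently, among the symbolic components $P_{2j}(Z)^{((t-j+1)(s+1))}$ appearing in Theorem~\ref{thm1_pfaffians}, the $j=1$ term is the unique $\mm$-primary component (since $P_{2j}(Z)\neq\mm$ for $j\gs 2$), and it occurs exactly when the lower index $r=\max\{1,m-(s+1)(m-t)\}$ equals $1$.

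Assume first $r=1$. Then saturation deletes precisely the $\mm$-primary ($j=1$) component, giving
\[(\overline{P_{2t}(Z)^{s+1}})^{\sat}=\bigcap_{j=2}^t P_{2j}(Z)^{((t-j+1)(s+1))}\,.\]
By Proposition~\ref{thm2_pfaffians}, a standard monomial of shape $(a_1,\dots,a_p)$ lies in this intersection if and only if $a_i\ls m$ for all $i$ (condition (1)) together with $\sum_i\max\{0,a_i-j+1\}\gs(t-j+1)(s+1)$ for each $j=2,\dots,t$; and it lies outside $\overline{P_{2t}(Z)^{s+1}}$ exactly when the $j=1$ inequality fails, i.e. $\sum a_i<t(s+1)$ (condition (2a)). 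The reduction of the inequalities for $j=2,\dots,t$ to the single inequality (3) is the purely combinatorial computation in the proof of Proposition~\ref{diagram_generic}, expressing $\sum_i\max\{0,a_i-j+1\}$ through $r_j=|\{i:a_i=j\}|$ and noting that the relevant minimum is attained at $j=2$; since it involves only the shape, it transfers unchanged. The diagram of shape $(2a_1,\dots,2a_p)$ is then the one attached to $\D$ by the pfaffian-to-tableau rule of Section~\ref{sec:standardmon}.

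For the second assertion I would intersect with $\overline{P_{2t}(Z)^s}$. Because $P_{2t}(Z)^s$ is generated in degree $ts$, its integral closure is contained in $\mm^{ts}$, so every standard monomial of $\overline{P_{2t}(Z)^s}$ satisfies $\sum a_i\gs ts$; conversely, once $\D$ already lies in $(\overline{P_{2t}(Z)^{s+1}})^{\sat}$, nestedness of symbolic powers $P_{2j}(Z)^{((t-j+1)(s+1))}\subseteq P_{2j}(Z)^{((t-j+1)s)}$ shows that the sole extra requirement imposed by membership in $\overline{P_{2t}(Z)^s}$ is exactly $\sum a_i\gs ts$. Combining this with (2a) gives $ts\ls\sum a_i<t(s+1)$, which is condition (2b).

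The genuinely new point, and the step I expect to demand the most care, is the regime $r>1$ (small $s$), where Theorem~\ref{thm1_pfaffians} has no $j=1$ term: then $\overline{P_{2t}(Z)^{s+1}}$ carries no $\mm$-primary component, is already saturated, and both displayed set-differences are empty. To keep the criterion valid for every $s$ I would check that conditions (1),(2a),(3) are then unsatisfiable. Indeed, (3) together with $a_i\ls m$ forces $p\gs\frac{(t-1)(s+1)}{m-1}$, while (3) together with (2a) forces $p\ls s$; a nonempty range for $p$ therefore requires $s\gs\frac{t-1}{m-t}$. On the other hand $r>1$ means $(s+1)(m-t)<m-1$, i.e. $s<\frac{t-1}{m-t}$, so the two regimes are complementary and for $r>1$ no shape meets the conditions. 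Hence both sides of each equivalence are empty in that range, and the proposition holds for all $s$.
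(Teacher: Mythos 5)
Your proposal is correct and follows the route the paper intends: the paper states this proposition without separate proof, presenting it as the ``nearly identical'' pfaffian analogue of Proposition~\ref{diagram_generic}, and your argument is exactly that adaptation --- Theorem~\ref{thm1_pfaffians} in place of Theorem~\ref{thm1_generic}, Proposition~\ref{thm2_pfaffians} in place of Theorem~\ref{thm2_generic}, the same reduction of the $j=2,\dots,t$ inequalities to condition (3), and the same extra condition $\sum a_i\gs ts$ (which you justify cleanly by the degree bound $\overline{P_{2t}(Z)^s}\subseteq\mm^{ts}$) for the intersection with $\overline{P_{2t}(Z)^s}$. Your treatment of the regime $r=\max\{1,\lfloor n/2\rfloor-(s+1)(\lfloor n/2\rfloor-t)\}>1$, where the decomposition has no $\mm$-primary component so that $\overline{P_{2t}(Z)^{s+1}}$ is already saturated, together with the verification that conditions (1), (2a), (3) are then unsatisfiable (so both sides of each equivalence are empty), addresses a genuine subtlety of the pfaffian case that the paper leaves implicit, and you resolve it correctly.
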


The number of standard tableaux on $\{1,\ldots ,n\}$ of shape $(a_1,\ldots ,a_p)$ is the dimension of the irreducible (if $\chara(K)=0$) $\GL_n(K)$-representation associated to $(a_1,\ldots ,a_p)$. This dimension can be computed by the hook-length formula; however, for our aims it is convenient to have a polynomial formula in suitable data characterizing the given shape, while the hook-length formula is certainly not polynomial in nature. 

\begin{remark}
The notation we are using are dual to the standard ones used in representation theory: for example, with our conventions, to the diagram $(t)$ would correspond $\bigwedge^t K^n$.
\end{remark}

To this goal, we set $r_i$ to be the number of rows of the diagram $(a_1,\ldots ,a_p)$ with exactly $i$ boxes. If we deal with diagrams with at most $m$ columns, then the diagram will be determined by $r_1,\ldots ,r_m$. The next proposition provides a polynomial formula for the number of standard tableaux on $\{1,\ldots ,n\}$ of shape $(a_1,\ldots ,a_p)$ in terms of the $r_i$. Such a quantity will be denoted by $W_n(r_1,\ldots ,r_m)$, and in particular it will be a (inhomogeneous) polynomial of degree $m(n-m)+\binom{m}{2}$. 

\begin{prop}\label{W-prop}
For all $i=1,\ldots ,m$, set $B_i=r_m+\cdots +r_{m-i+1}$. Then:
\[
\displaystyle W_n(r_1,\ldots, r_m)= \frac{ \prod_{i=1}^{m}(B_{i}+i)\cdots(B_{i}+i+n-m-1)\cdot \prod_{i<j}(B_{j}-B_i+j-i)}{(n-1)!(n-2)!\cdots (n-m)! }\,.
\]
\end{prop}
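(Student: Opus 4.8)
The plan is to recognize $W_n(r_1,\dots,r_m)$ as the principal specialization of a Schur polynomial and then to massage the Weyl dimension formula into the stated shape. First I would observe that the tableaux being counted have strictly increasing rows and nondecreasing columns, which is the transpose of the usual semistandard convention; transposing each tableau sets up a bijection with the semistandard Young tableaux (nondecreasing rows, strictly increasing columns) on $\{1,\dots,n\}$ of the conjugate shape $\lambda'$. Hence $W_n(r_1,\dots,r_m)=s_{\lambda'}(1,\dots,1)$ ($n$ ones). Since $\lambda$ has exactly $r_i$ rows of length $i$, its conjugate satisfies $\lambda'_j=r_j+r_{j+1}+\cdots+r_m$, so in terms of the partial sums $B_i=r_m+\cdots+r_{m-i+1}$ we get $\lambda'_j=B_{m-j+1}$; that is, $\lambda'=(B_m,B_{m-1},\dots,B_1,0,\dots,0)$, padded with zeros to length $n$ (legitimate because $B_m\geq\cdots\geq B_1\geq 0$). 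Note this count is purely combinatorial, so no hypothesis on $\chara(K)$ is needed.

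Next I would apply the classical product formula $s_\mu(1^n)=\prod_{1\le i<j\le n}\frac{\mu_i-\mu_j+j-i}{j-i}$ (the Weyl dimension formula) to $\mu=\lambda'$. Writing $\ell_i=\lambda'_i+n-i$ and using $\prod_{1\le i<j\le n}(j-i)=\prod_{k=1}^{n-1}k!$, the task reduces to analyzing the numerator $\prod_{1\le i<j\le n}(\ell_i-\ell_j)$. I would split this product according to whether the indices lie in $\{1,\dots,m\}$ or in $\{m+1,\dots,n\}$, giving three blocks: (a) $1\le i<j\le m$, (b) $i\le m<j$, and (c) $m<i<j\le n$. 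In block (c) one has $\lambda'_i=\lambda'_j=0$, so $\ell_i-\ell_j=j-i$ and the block equals $\prod_{1\le a<b\le n-m}(b-a)=\prod_{k=1}^{n-m-1}k!$. After the reindexing $i\mapsto m+1-i$ (which converts $\lambda'_i=B_{m+1-i}$ into $B_i$ and reverses the orientation of the index differences), block (a) becomes $\prod_{1\le i<j\le m}(B_j-B_i+j-i)$; and since for a fixed row the values $\ell_j$ with $j>m$ run exactly over $\{0,1,\dots,n-m-1\}$, block (b) becomes $\prod_{i=1}^m(B_i+i)(B_i+i+1)\cdots(B_i+i+n-m-1)$.

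Finally I would assemble the pieces. The factor $\prod_{k=1}^{n-m-1}k!$ coming from block (c) cancels the initial segment of $\prod_{k=1}^{n-1}k!$ in the denominator, leaving precisely $(n-1)!(n-2)!\cdots(n-m)!$; combining this with blocks (a) and (b) yields the claimed closed form for $W_n$. The degree statement is then immediate: block (b) contributes $m(n-m)$ and block (a) contributes $\binom{m}{2}$ linear factors in the $B_i$ (hence in the $r_i$), for total degree $m(n-m)+\binom{m}{2}$.

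I expect the main obstacle to be bookkeeping rather than conceptual: correctly tracking the index reversal $\lambda'_i=B_{m+1-i}$ when splitting the Vandermonde product, so that the orientation of each difference $\ell_i-\ell_j$ matches $B_j-B_i+(j-i)$ with the correct sign, and verifying that the superfactorials telescope to exactly $(n-1)!\cdots(n-m)!$. To guard against sign and indexing errors I would check the normalization on the extreme cases $m=1$ (a single column of height $p$, where the formula must reduce to $\binom{n+p-1}{n-1}$) and a single box (where it must give $n$), both of which pin down the constant and the orientation.
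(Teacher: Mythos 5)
Your proof is correct, and it takes a genuinely different route from the paper's, although both ultimately rest on classical product formulas for counting tableaux. The paper applies the hook--content formula box by box to the diagram of shape $\lambda$ itself (in its row-strict convention): the per-box factors $\frac{n-b+i}{(m+2-a-b)+B_{m+1-b}-i}$ are grouped into regions $R_{a,b}$, multiplied column by column, and then simplified through a telescoping cancellation, which is where most of the labor in the paper's argument lies. You instead transpose, identify $W_n$ with the principal specialization $s_{\lambda'}(1^n)$ via the SSYT/conjugate-shape dictionary, and invoke the product-over-pairs formula $s_\mu(1^n)=\prod_{1\le i<j\le n}\bigl(\mu_i-\mu_j+j-i\bigr)/(j-i)$; the three blocks of that product (pairs inside $\{1,\dots,m\}$, mixed pairs, pairs inside $\{m+1,\dots,n\}$) then map exactly onto the two products in the numerator and onto the superfactorial cancellation $\prod_{k=1}^{n-1}k!\,/\prod_{k=1}^{n-m-1}k!=(n-1)!\cdots(n-m)!$ producing the denominator. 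Your bookkeeping is sound: $\lambda'_j=B_{m-j+1}$ is weakly decreasing since $B_i-B_{i-1}=r_{m-i+1}\ge 0$, the reindexing $i\mapsto m+1-i$ gives each factor as $B_j-B_i+j-i$ with the correct (positive) orientation, and for fixed $i\le m$ the mixed factors run over $B_{i'}+i',\dots,B_{i'}+i'+n-m-1$ as claimed. What each approach buys: the paper's computation stays entirely inside the hook formula and the original shape, needing only the single citation to Stanley, whereas yours makes the final closed form structurally transparent (each block is literally one factor of the stated formula), at the cost of importing the transposition bijection and the Weyl dimension formula; both arguments are characteristic-free and both yield the degree count $m(n-m)+\binom{m}{2}$ immediately.
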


\begin{proof}

The proof is an application of the hook length formula \cite{Sta1}. 

We will divide the diagram in $\frac{m(m+1)}{2}$ regions $R_{a,b}$, for all $1\ls a\ls m$ and ${1\ls b\ls (m+1-a)}$. The boxes of $R_{a,b}$ are the ones $(i,b)$ such that ${B_{a-1}+1\ls i\ls B_a}$ where $B_0=0$. 

The factors in the hook length formula corresponding to the boxes in the region $R_{a,b}$ are:
\[\prod_{i=B_{a-1}+1}^{B_a}\frac{n-b+i}{(m+2-a-b)+B_{m+1-b}-i}\,.\]

Now we multiply together the factors corresponding to a fixed $b$, that is to a fix column:
\begin{align*}
\prod_{i=1}^{B_1}\frac{n-b+i}{(m+1-b)+B_{m+1-b}-i}
\cdot&\prod_{i=B_1+1}^{B_2}\frac{n-b+i}{(m-b)+B_{m+1-b}-i}
\\
\cdots &\prod_{i=B_{m-b}+1}^{B_{m+1}-b}\frac{n-b+i}{1+B_{m+1-b}-i}\,.
\end{align*}
Now, we can rearrange these factors to obtain:
\begin{align*}
=\prod_{i=1}^{B_{m+1-b}}\frac{n-b+i}{(m+1-b)+B_{m+1-b}-i}
&\cdot \prod_{i=B_1+1}^{B_{m+1-b}}\frac{(m+1-b)+B_{m+1-b}-i}{(m-b)+B_{m+1-b}-i}\\
\prod_{i=B_2+1}^{B_{m+1-b}}\frac{(m-b)+B_{m+1-b}-i}{(m-1-b)+B_{m+1-b}-i}
&\cdots
\prod_{i=B_{m-b}+1}^{B_{m+1-b}}\frac{2+B_{m+1-b}-i}{1+B_{m+1-b}-i}\,.
\end{align*}

Most of these terms cancel and this product is equal to:
\begin{align*}
=\frac{(n-b+B_{m+1-b})\ldots (m+1-b+B_{m+1-b})}{(n-b)\cdots 
(m+1-b)}&\cdot \frac{m-b+B_{m+1-b}-B_1}{m-b}\cdot\\
\frac{m-1-b+B_{m+1-b}-B_2}{m-1-b}&\cdots \frac{1+B_{m+1-b}-B_{m-b}}{1}
\end{align*}
\[
=\frac{\prod_{i=m+1-b}^{n-b} (i+B_{m+1-b}) \cdot \prod_{i=1}^{m-b} \big( (m+1-b-i)+B_{m+1-b}-B_i  \big) }{(n-b)!}\,. 
\]

Multiplying this last expression over all $b$ we obtain the desired conclusion.
\end{proof}

\section{Multiplicities of determinantal varieties}\label{section_main}

In this section we give expressions for the $j$-multiplicity, the $\e$-multiplicity, and the multiplicity of the fiber cone of any determinantal ideal of generic, generic symmetric, and generic skew-symmetric matrices. Let $\nu$ be the measure on the affine subspace $\sum{z}=t$ such that $\pi_* \nu$ is Lebesgue measure, where $\pi$ is projection onto one of the coordinate hyperplanes.

\begin{thm}\label{thm_generic} Let $X$ be a generic $m \times n$ matrix of indeterminates, $t$ an integer with $0<t<m$, and set
\[
c =\frac{(mn-1)!}{(n-1)!(n-2)!\cdots (n-m)!\cdot m!(m-1)!\cdots 1!}\,.
\]
Then
\begin{itemize}
\item[(i)]\qquad $\displaystyle
j(I_t(X))= {ct} \int\limits_{\substack{[0,1]^m \\ \sum{z_i}= t}}(z_1\cdots z_{m})^{n-m}\prod_{1\ls i<j \ls m}(z_j-z_i)^2 \ddd{\nu}\,;$\vspace{2mm}
\item[(ii)]\qquad $\displaystyle
\e(I_t(X))=\,\,cmn\!\!\!\!\!\!\!\!\!\!\!\!\!\!\!\!\!\!\!\!\!\!\!\!\int\limits_{\substack{[0,1]^m \\  \max_i\{z_i\}+t-1\ls \sum{z_i} \ls t }}\!\!\!\!\!\!\!\!\!\!\!\!\!\!\!\!\!\!\!\!\!(z_1\cdots z_{m})^{n-m}\prod_{1\ls i<j \ls m}(z_j-z_i)^2 \ddd{z}\,;$\\
\item[(iii)] If $\chara(K)=0$ or $>\min\{t,m-t,n-t\}$ then,

\qquad $\displaystyle
e(A_t(X))=c\int\limits_{\substack{[0,1]^m \\ \sum{z_i}= t}}(z_1\cdots z_{m})^{n-m}\prod_{1\ls i<j \ls m}(z_j-z_i)^2 \ddd{\nu}\,;$\vspace{2mm}
\vspace{2mm}
\end{itemize}
where $A_t(X)$ is the algebra of minors $\F(I_t(X))$.
\end{thm}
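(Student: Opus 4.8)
The plan is to reduce each of the three multiplicities to a lattice-point sum over shapes of standard monomials, then push these sums through the Integration Lemma (Lemma~\ref{integration}) to land on the stated integrals. The combinatorial engine is the chain of results assembled in Sections~\ref{sec:standardmon} and~\ref{sec:count}: the Diagram Criterion (Proposition~\ref{diagram_generic}) tells us exactly which shapes $(a_1,\dots,a_p)$ contribute to the relevant length computations, and Proposition~\ref{W-prop} converts "number of standard tableaux of a given shape" into the explicit polynomial $W_n(r_1,\dots,r_m)$ in the row-count variables $r_i=|\{i:a_i=i\}|$. Since the standard monomials form a $K$-basis, the length of each torsion module appearing in Proposition~\ref{j-prop} is literally a sum of $W_n(r_1,\dots,r_m)\cdot W_m(r_1,\dots,r_m)$ (a product of the two tableaux counts, one for rows and one for columns) over the shapes permitted by the appropriate Diagram Criterion conditions.

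The core of the argument, then, is to carry out the three length-to-integral passages.
\begin{itemize}
\item[(i)] For $j(I_t(X))$ I would use Proposition~\ref{j-prop}(a): the relevant module in degree $s$ is $\big((\overline{I^{s+1}})^{\sat}\cap \overline{I^s}\big)/\overline{I^{s+1}}$, whose dimension is $\sum W_n W_m$ summed over shapes satisfying conditions (1), (2b), (3) of Proposition~\ref{diagram_generic}. After substituting $r_i$ and rescaling $z_i = B_i/s$ (so the partial sums $B_i=r_m+\cdots+r_{m-i+1}$ become the integration variables), the polynomial $W_n W_m$ has leading form that is exactly $(z_1\cdots z_m)^{n-m}\prod_{i<j}(z_j-z_i)^2$ up to the constant $c$, while conditions (2b) and (3) cut out the simplex $\{\sum z_i=t,\ z_i\in[0,1]\}$. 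The Integration Lemma then produces precisely the integral in (i), with the factor $t$ and the measure $\nu$ accounting for the single modular/affine constraint $\sum z_i=ts+r$.
\item[(ii)] For $\e(I_t(X))$ I would instead invoke Proposition~\ref{j-prop}(b), whose module is $(\overline{I^{s+1}})^{\sat}/\overline{I^{s+1}}$; this corresponds to dropping the lower bound in (2b) and keeping only (1), (2a), (3), so the region becomes the $m$-dimensional hypersimplicial slab $\max_i\{z_i\}+t-1\le \sum z_i\le t$. Here the normalization $\tfrac{d!}{s^d}$ rather than $\tfrac{(d-1)!}{s^{d-1}}$ accounts for integrating over a full-dimensional region (hence $\ddd z$, not $\ddd\nu$) and gives the constant $cmn$.
\item[(iii)] For $e(A_t(X))=e(\F(I_t(X)))$, Theorem~\ref{thm:j=te}(iii) (whose hypothesis holds since the powers have linear resolutions) gives $j(I_t(X))=t\cdot e(\F(I_t(X)))$, so (iii) is immediate from (i) after dividing by $t$; the characteristic hypothesis is only needed to invoke the exact primary decomposition of Theorem~\ref{thm1_generic} rather than its integral-closure version.
\end{itemize}

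\textbf{The main obstacle} I anticipate is the bookkeeping in step (i): verifying that the leading homogeneous form of the product $W_n(r_1,\dots,r_m)\,W_m(r_1,\dots,r_m)$, after the change of variables to the $B_i$ and rescaling by $s$, is exactly $(z_1\cdots z_m)^{n-m}\prod_{i<j}(z_j-z_i)^2$ with the stated total degree $m(n-m)+2\binom{m}{2}=(mn-1)-(\dim+\text{codim adjustments})$. This requires tracking the degree-$e$ part in the Integration Lemma and confirming that the Vandermonde-squared factor arises from the product of the two $\prod_{i<j}(B_j-B_i+j-i)$ terms while the $(z_1\cdots z_m)^{n-m}$ factor arises from the $\prod_i (B_i+i)\cdots(B_i+i+n-m-1)$ terms in Proposition~\ref{W-prop}. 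I also need to check that the degenerate shapes (those with some $a_i$ equal, forcing $W_m$ or $W_n$ to vanish, or shapes on the boundary of $\mcal P$) contribute negligibly in the limit, which is exactly the "boundary-of-$\mcal P$" caveat built into the Integration Lemma's Riemann-sum argument. Once the leading form is matched and the defining inequalities are translated into $\mcal P$, the three integrals follow uniformly, and the symmetric and pfaffian analogues (Theorems~\ref{thm_symmetric} and~\ref{thm_pfaffian}) go through identically using Propositions~\ref{diagram_symmetric} and~\ref{diagram_pfaffian}.
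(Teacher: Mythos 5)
Your treatment of parts (i) and (ii) is essentially the paper's own argument: Proposition~\ref{j-prop} reduces each multiplicity to the asymptotics of a length, the standard monomial basis together with the Diagram Criterion (Proposition~\ref{diagram_generic}) converts that length into the sum of $W_m(r_1,\ldots,r_m)W_n(r_1,\ldots,r_m)$ over admissible shapes, and Lemma~\ref{integration} turns the sums into the stated integrals --- with the factor $t$ in (i) coming from the $t$ admissible values of $k=\sum a_i-ts$ allowed by condition (2b), and the full-dimensional region (hence $\ddd{z}$ and the constant $cmn$) in (ii) coming from replacing (2b) by (2a). One incidental misconception: shapes with repeated row lengths do \emph{not} force $W_m$ or $W_n$ to vanish (when $B_i=B_j$ the factor $B_j-B_i+j-i$ equals $j-i>0$); such shapes are ordinary summands, and the only degeneracy issue is the boundary-of-$\mcal{P}$ caveat already built into Lemma~\ref{integration}.

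The genuine gap is in (iii). You justify the hypothesis $[(I^s)^{\sat}]_r=[I^s]_r$ for $r\gs st$ of Theorem~\ref{thm:j=te}(iii) by asserting that the powers of $I_t(X)$ have linear resolutions. That is false throughout the range $1<t<m$ covered by the theorem: linear powers is a property of \emph{maximal} minors ($t=m$), precisely the case excluded here. Already for the generic $3\times 3$ matrix, $I_2(X)$ defines the Segre embedding $\PP^2\times\PP^2\subseteq \PP^8$, of codimension $4$ and degree $6$; since its coordinate ring is Cohen--Macaulay, a $2$-linear resolution would force the degree to equal $\codim+1=5$, so not even the first power is linearly resolved. (Your own remark that the characteristic hypothesis is needed should have been a warning: linear powers would give the saturation condition in every characteristic.) The correct argument --- the one the paper gives --- stays inside the standard monomial theory: under the characteristic assumption, Theorem~\ref{thm1_generic} yields $I_t(X)^{s}=\overline{I_t(X)^{s}}=\bigcap_{j=1}^t I_j(X)^{((t-j+1)s)}$, and condition (2a) of Proposition~\ref{diagram_generic} then says that every standard monomial lying in $(I_t(X)^{s})^{\sat}\setminus I_t(X)^{s}$ has degree $<ts$; hence $[(I_t(X)^{s})^{\sat}]_r=[I_t(X)^{s}]_r$ for all $r\gs ts$, which is exactly what Theorem~\ref{thm:j=te}(iii) requires. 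With that substitution, your deduction $e(A_t(X))=j(I_t(X))/t$ from part (i) is correct.
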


\begin{proof}
For (i), by Proposition \ref{j-prop} we have \[\displaystyle j(I_t(X))=\displaystyle\lim_{s\rightarrow \infty} \frac{(d-1)!}{s^{d-1}}\length\Big(\big((\overline{I_t(X)^{s+1}})^{\sat} \cap \overline{I_t(X)^s}\,\big) / \overline{I_t(X)^{s+1}}\,\Big)\,.\]
We compute the length 
\[\ell_t(s):=\lambda\Big(\big((\overline{I_t(X)^{s+1}})^{\sat} \cap \overline{I_t(X)^s}\,\big) / \overline{I_t(X)^{s+1}}\,\Big)\] 
by counting the number of standard monomials in
 \[\big((\overline{I_t(X)^{s+1}})^{\sat} \cap \overline{I_t(X)^s}\,\big) \setminus \overline{I_t(X)^{s+1}}\,.\]
Write $G_t(s)$ for the set of diagrams corresponding to these standard monomials. As in the discussion before Proposition \ref{W-prop}, for a standard monomial denote by $r_i$ the number of rows in the diagram associated to it. The condition in Proposition~\ref{diagram_generic} can be expressed in terms of the numbers $r_i$. Set 
\[k=\sum a_i-ts= mr_m+(m-1)r_{m-1}+\cdots+r_1 -ts \,.\] 
A diagram belongs to $G_t(s)$ if and only if 
\begin{itemize}
\item[1)] $r_i=0$, $\forall i> m$
\item[2b)] $0\ls k<t$
\item[3)] $r_m+r_{m-1}+\cdots+r_1\ls s-t+1+k\,,$
\end{itemize}
which can be rewritten as
\begin{equation*}
\begin{aligned}
 0\ls k&<t\\
 (m-1)r_{m-1}+(m-2)r_{m-2}+\cdots+r_1&\ls st+k\\
 (m-1)r_{m-1}+(m-2)r_{m-2}+\cdots+r_1&\equiv st+k \mdd{m}\\
 r_{m-1}+2r_{m-2}+\cdots+(m-1)r_1&\ls s(m-t)+m(1-t)+k(m-1)\,.
\end{aligned}
\end{equation*}
Then, 
\[\ell_t(s)=\sum_{(r_{m-1},\dots,r_1)\in G_t(s)}{W_m (r_1,\ldots,r_m) W_n (r_1,\ldots,r_m)}\,,\]
with $r_m=\frac{1}{m}(st+k-(m-1)r_{m-1}-\cdots-r_1)$. We may now apply Lemma~\ref{integration} to obtain
\[\lim_{s\rightarrow \infty} \frac{\ell_t(s)}{s^{mn-1}}=\frac{t}{m}\int_\P \frac{(B_1\cdots B_m)^{n-m} \prod_{i<j}(B_j-B_i)^2}{(n-1)!\cdots(n-m)!\cdot (m-1)!\cdots 1!}\ddd{r_1}\cdots\ddd{r_{m-1}}\,,\]
for $B_k=r_m+\cdots+r_{m-k+1}$, and $\P\subset \RR^{m-1}$ defined by the inequalities
\begin{equation*}
\begin{aligned}
 r_i &\gs 0\\
 (m-1)r_{m-1}+(m-2)r_{m-2}+\cdots+r_1&\ls t\\
 r_{m-1}+2r_{m-2}+\cdots+(m-1)r_1&\ls m-t\,,
\end{aligned}
\end{equation*}
and $r_m=\frac{1}{m}(t-(m-1)r_{m-1}-\cdots-r_1)$. Then, applying the change of variables $z_i=B_i,$ for $i=\{1,\dots,m\}$, one has
\[
\begin{aligned}
j(I_t(X))&=\lim_{s\rightarrow \infty} \frac{(nm-1)!\ell_t(s)}{s^{mn-1}} \\
&=t(nm-1)!\int_{\R}\frac{(z_1\cdots z_m)^{n-m}\prod_{i<j}(z_i-z_j)^2}{(n-1)!\cdots(n-m)!\cdot (m-1)!\cdots 1!}\ddd{\nu}\,,
\end{aligned}\]
where $\R\subset \RR^{m}$ is the region given by
\begin{equation*}
\begin{aligned}
&0\ls z_1 \ls z_2 \ls \cdots \ls z_{m-1}\ls z_m \ls 1\\
&z_1+z_2+\cdots+z_{m-1}+z_{m}=t\,.
\end{aligned}
\end{equation*}
Using the fact that the integrand is symmetric under permutation of the variables, we obtain the formula in the statement.

For (ii), one proceeds as in (i) using the condition in Proposition~\ref{diagram_generic} for a standard monomial to belong to $(\overline{I_t(X)^{s+1}})^{\sat} \, / \overline{I_t(X)^{s+1}}\,\,.$

For (iii), by Theorem \ref{thm1_generic} and condition 2a) of Proposition \ref{diagram_generic} we conclude that $I_t(X)$ satisfies the assumption in Theorem \ref{thm:j=te},~(iii); the formula then follows by part (i).
\end{proof}

We have analogous statements for the symmetric and skew-symmetric cases, the proofs of which proceed along the same lines as above.

\begin{thm}\label{thm_symmetric} Let $Y$ be a generic symmetric $n \times n$ matrix of indeterminates, $t$ an integer with $0<t<n$, and set
\[
c =\frac{2^{\binom{n}{2}} \big(\binom{n+1}{2}-1\big)!}{n!(n-1)!(n-2)!\cdots 1!}\,.
\]
Then\begin{itemize}
\item[(i)] \qquad $\displaystyle
j(I_t(Y))=ct \int\limits_{\substack{[0,1]^n \\ \sum{z_i}= t}}\prod_{1\ls i<j \ls n}|z_j-z_i| \ddd{\nu}\,;$\vspace{2mm}

\item[(ii)]\qquad $\displaystyle
\e(I_t(Y))=\,\,c\binom{n+1}{2}\!\!\!\!\!\!\!\!\!\!\!\!\!\!\!\!\!\!\!\!\!\int\limits_{\substack{[0,1]^n \\  \max_i\{z_i\}+t-1\ls \sum{z_i} \ls t }}\!\!\!\!\!\!\!\!\!\!\!\!\!\!\!\!\!\!\! \prod_{1\ls i<j \ls n}|z_j-z_i| \ddd{z}\,,$\vspace{2mm}
\item[(iii)]  If $\chara(K)=0$ or $>\min\{t,n-t\}$ then,

\qquad $\displaystyle
e(A_t(Y))=c \int\limits_{\substack{[0,1]^n \\ \sum{z_i}= t}} \prod_{1\ls i<j \ls n} |z_j-z_i| \ddd{\nu}\,;$\vspace{2mm}
\end{itemize}
where $A_t(Y)$ is the algebra of minors $\F(I_t(Y))$.
\end{thm}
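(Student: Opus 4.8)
The plan is to mirror the proof of Theorem~\ref{thm_generic} almost verbatim, replacing the generic standard monomial theory with the symmetric one developed in Section~\ref{sec:standardmon}. First I would use Proposition~\ref{j-prop}(a) to write $j(I_t(Y))$ as the limit of $\frac{(d-1)!}{s^{d-1}}$ times the length of $\big((\overline{I_t(Y)^{s+1}})^{\sat}\cap\overline{I_t(Y)^s}\big)/\overline{I_t(Y)^{s+1}}$, where $d=\dim K[Y]=\binom{n+1}{2}$. I would then compute this length by counting standard monomials in the corresponding set-theoretic difference, using Proposition~\ref{diagram_symmetric} to characterize the admissible shapes $(a_1,\dots,a_p)$ by exactly conditions (1), (2b), and (3) of Proposition~\ref{diagram_generic} (now with $m=n$).

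The key difference from the generic case is the tableau-counting step. In the generic case a standard monomial corresponds to a \emph{pair} of tableaux (row-tableau and column-tableau), so the length was a sum of products $W_m(r_1,\dots,r_m)\,W_n(r_1,\dots,r_m)$. In the symmetric case, as explained in Section~\ref{sec:standardmon}, a standard monomial corresponds to a \emph{single} diagram of shape $(a_1,a_1,a_2,a_2,\dots,a_p,a_p)$ filled as a standard tableau on $\{1,\dots,n\}$. Therefore I would count these by the single factor $W_n$ evaluated at the doubled shape. Concretely, if the original shape has $r_i$ rows of length $i$, the doubled diagram has $2r_i$ rows of length $i$, so the count is $W_n(2r_1,\dots,2r_n)$; equivalently I would re-derive, via the hook-length formula as in Proposition~\ref{W-prop}, the polynomial giving the number of standard tableaux of the doubled shape, whose top-degree part produces the integrand $\prod_{i<j}|z_j-z_i|$ rather than its square. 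This is where the two factors $(z_1\cdots z_m)^{n-m}$ and one copy of $\prod(z_j-z_i)^2$ collapse: with $m=n$ the monomial factor disappears, and doubling the multiplicities halves the exponent on the Vandermonde, leaving the single product $\prod_{1\le i<j\le n}|z_j-z_i|$. The constant $c=2^{\binom{n}{2}}\big(\binom{n+1}{2}-1\big)!/\big(n!(n-1)!\cdots 1!\big)$ should emerge from the factor $2^{\binom{n}{2}}$ coming from the doubling of the lattice variables (each $r_i\mapsto 2r_i$ contributes a Jacobian factor) together with the normalizing $\big(\binom{n+1}{2}-1\big)!=(d-1)!$.

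Having the count in hand, I would recast conditions (1),(2b),(3) as a system of inequalities and one congruence in the integer variables $r_1,\dots,r_{n-1}$ (with $r_n$ solved from $\sum a_i=ts+k$ and $0\le k<t$), exactly paralleling the display in the generic proof, and then invoke the Integration Lemma~\ref{integration} to pass from the sum $\sum_{G_t(s)} W_n(2r_1,\dots,2r_n)$ to the stated integral. After the change of variables $z_i=B_i=r_n+\cdots+r_{n-i+1}$ I would obtain the region $0\le z_1\le\cdots\le z_n\le 1$, $\sum z_i=t$, and symmetrize the integrand to remove the ordering, yielding formula (i). For (ii) I would repeat the argument using Proposition~\ref{j-prop}(b) and the saturation-only region $(\overline{I_t(Y)^{s+1}})^{\sat}/\overline{I_t(Y)^{s+1}}$, whose shapes satisfy (1),(2a),(3); the degree count gains one, replacing $t/m$ by the normalization producing $\binom{n+1}{2}$, and the region becomes $\max_i\{z_i\}+t-1\le\sum z_i\le t$. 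For (iii) I would apply Theorem~\ref{thm1_symmetric} together with condition (2a) to verify the saturation hypothesis of Theorem~\ref{thm:j=te}(iii), so that $e(A_t(Y))=j(I_t(Y))/t$, giving the formula from (i). The main obstacle I anticipate is the bookkeeping in the tableau-counting step: verifying that the single-tableau count on the doubled shape has top-degree part $\prod_{i<j}|z_j-z_i|$ and that the doubling produces precisely the factor $2^{\binom{n}{2}}$ in $c$, since this is the one place where the symmetric case genuinely diverges from the generic computation rather than following it formally.
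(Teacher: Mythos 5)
Your proposal is correct and follows essentially the same route as the paper: the published proof likewise reduces everything to the argument of Theorem~\ref{thm_generic}, encodes the doubled diagrams by letting $2r_i$ be the number of rows of length $i$, counts them by the single factor $W_n(2r_1,\ldots,2r_n)$ whose leading term equals that of $2^{\binom{n}{2}}W_n(r_1,\ldots,r_n)$, and handles part (iii) via Theorem~\ref{thm1_symmetric} and Proposition~\ref{diagram_symmetric} exactly as you describe. One small quibble with your wording (not with the plan): the exponent on the Vandermonde drops from $2$ to $1$ because each standard monomial now carries a \emph{single} tableau rather than a pair, while the doubling $r_i\mapsto 2r_i$ contributes only the constant $2^{\binom{n}{2}}$, since the leading form of $W_n$ (with $m=n$) is homogeneous of degree $\binom{n}{2}$ --- which is what your actual computation via $W_n(2r_1,\ldots,2r_n)$ yields anyway.
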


\begin{proof}

 The diagrams in this case have an even number of rows of each size. So if we denote by $2r_i$ the number of rows of length $i$ for $1\ls i \ls n$, the conditions in Proposition \ref{diagram_symmetric} can be written in terms of the $r_i$ as in the proof of Theorem \ref{thm_generic}.  Now, we apply Proposition \ref{W-prop} to count the number of tableaux, we compute $W_n(2r_1,\ldots,2r_n)$ whose leading term equals the one of $2^{\binom{n}{2}}W_n(r_1,\ldots,r_n)$. The rest of the proof follows as in Theorem \ref{thm_generic}, using Theorem \ref{thm1_symmetric} and Proposition \ref{diagram_symmetric} for part iii).
\end{proof}

\begin{thm}\label{thm_pfaffian} Let $Z$ be a generic skew-symmetric $n \times n$ matrix of indeterminates. Set $m=\lfloor\frac{n}{2}\rfloor$. Let $t$ an integer with $0<t<m$ and put
\[
c =\frac {\big(\binom{n}{2}-1\big)!}{m!(n-1)!(n-2)!\cdots 1!}
\]
Also, set $\delta(n)$ to be 0 if $n$ is even and 1 otherwise. Then
\begin{itemize}
\item[(i)] \qquad $\displaystyle
j(P_{2t}(Z))=ct \int\limits_{\substack{[0,1]^m \\ \sum{z_i}= t}} (z_1\cdots z_m)^{2\delta(n)} \prod_{1\ls i<j \ls m}(z_j-z_i)^4  \ddd{\nu}\,;$\vspace{2mm}

\item[(ii)] \qquad $\displaystyle
\e(P_{2t}(Z))=\,\,c\binom{n}{2}\!\!\!\!\!\!\!\!\!\!\!\!\!\!\!\!\!\!\!\!\!\!\int\limits_{\substack{[0,1]^m \\  \max_i\{z_i\}+t-1\ls \sum{z_i} \ls t }}\!\!\!\!\!\!\!\!\!\!\!\!\!\!\!\!\!\!\! (z_1\cdots z_m)^{2\delta(n)} \prod_{1\ls i<j \ls m}(z_j-z_i)^4  \ddd{z}\,,$\vspace{2mm}

\item[(iii)]  If $\chara(K)=0$ or $>\min\{2t,n-2t\}$ then,

\qquad $\displaystyle
e(A_t(Z))={c}\int\limits_{\substack{[0,1]^m \\ \sum{z_i}= t}} (z_1\cdots z_m)^{2\delta(n)} \prod_{1\ls i<j \ls m}(z_j-z_i)^4  \ddd{\nu}\,;$\vspace{2mm}
\end{itemize}
where $A_t(Z)$ is the algebra of pfaffians $\F(P_{2t}(Z))$.
\end{thm}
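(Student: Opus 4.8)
The plan is to follow the template of the proofs of Theorems~\ref{thm_generic} and~\ref{thm_symmetric}, the only genuinely new ingredient being the asymptotic analysis of the tableau count $W_n$ on the \emph{doubled} shapes produced by pfaffians. For part (i), I would start from Proposition~\ref{j-prop}(a), which gives
\[ j(P_{2t}(Z)) = \lim_{s\to\infty}\frac{(d-1)!}{s^{d-1}}\,\length\Big(\big((\overline{P_{2t}(Z)^{s+1}})^{\sat}\cap\overline{P_{2t}(Z)^{s}}\,\big)\big/\overline{P_{2t}(Z)^{s+1}}\Big)\,, \]
where $d=\binom{n}{2}=\dim K[Z]$. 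Since standard monomials form a $K$-basis, this length counts the standard monomials in the corresponding set, which by Proposition~\ref{diagram_pfaffian} is governed by conditions (1), (2b), (3) of Proposition~\ref{diagram_generic} with $m=\lfloor n/2\rfloor$. Writing $r_i$ for the number of parts of the shape equal to $i$ (so $\sum a_j=\sum_i i\,r_i$, and the associated tableau has shape $(2a_1,\dots,2a_p)$), these translate verbatim into the inequalities and the single congruence modulo $m$ appearing in the proof of Theorem~\ref{thm_generic}. The length is then $\ell_t(s)=\sum_{G_t(s)}W_n(\,\cdot\,)$, a sum of a \emph{single} copy of $W_n$ (one tableau per monomial, as in the symmetric case) over the lattice points $G_t(s)$.

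The heart of the argument, and the step I expect to be the \textbf{main obstacle}, is extracting the leading form of $W_n$ evaluated on a shape with only even row lengths. Proposition~\ref{W-prop} is applied with $2m$ columns; writing $r'_j$ for the number of rows of length $j$ we have $r'_{\text{odd}}=0$, so the quantities $B_k=r'_{2m}+\cdots+r'_{2m-k+1}$ collapse in pairs, $B_{2i-1}=B_{2i}=R_i:=r_m+\cdots+r_{m-i+1}$. Consequently the naive Vandermonde leading term $\prod_{k<l}(B_l-B_k)$ vanishes identically, and one \emph{must} keep the shifts in $\prod_{k<l}(B_l-B_k+l-k)$. Splitting this product into intra-pair factors (each equal to the constant $B_{2i}-B_{2i-1}+1=1$) and the four inter-pair factors for every $i<j$ (each asymptotic to $R_j-R_i$), the true leading form is $\prod_{i<j}(R_j-R_i)^4$. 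Simultaneously the exponent $n-2m=\delta(n)$ in the factors $\prod_k(B_k+k)\cdots(B_k+k+n-2m-1)$ produces $\prod_k B_k^{\delta(n)}=(R_1\cdots R_m)^{2\delta(n)}$. Thus the leading form of $W_n$ is
\[ \frac{(R_1\cdots R_m)^{2\delta(n)}\prod_{i<j}(R_j-R_i)^4}{(n-1)!\cdots 1!}\,, \]
of degree $2m\delta(n)+2m(m-1)$, and adding the $m-1$ summation dimensions yields total degree $\binom{n}{2}-1=d-1$, exactly as required. This is the analogue of the clean scaling $W_n(2r_1,\dots,2r_n)\sim 2^{\binom{n}{2}}W_n(r_1,\dots,r_n)$ used in the symmetric case, but here the degeneration of the $B_k$ forces the more delicate computation above and is precisely the source of the fourth power and of the factor $\delta(n)$.

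With this leading form in hand, I would apply the Integration Lemma~\ref{integration} (modulus $m$, summing over the $t$ residues $k=0,\dots,t-1$) exactly as in Theorem~\ref{thm_generic}, and change variables $z_i=R_i$. The overall constant assembles from $(d-1)!=(\binom{n}{2}-1)!$, the denominator $(n-1)!\cdots 1!$ of $W_n$, and the factor $m!$ introduced by passing from the ordered simplex to the symmetric region $\{\sum z_i=t\}\cap[0,1]^m$; this gives $c=\frac{(\binom n2-1)!}{m!\,(n-1)!\cdots 1!}$ and the stated formula, the integrand $(z_1\cdots z_m)^{2\delta(n)}\prod_{i<j}(z_j-z_i)^4$ being manifestly symmetric so that no absolute value is needed.

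For (ii) I would replace condition (2b) by (2a) of Proposition~\ref{diagram_generic} (so that $\sum a_i<t(s+1)$ is the only upper bound while condition (3) supplies the lower bound $\max_i z_i+t-1\le\sum z_i$), use Proposition~\ref{j-prop}(b) with the normalization $d!/s^d$; the extra summation dimension produces the multiplier $d=\binom n2$ and the two-sided region, with the same integrand. For (iii), note that $P_{2t}(Z)$ is generated in the single degree $t$ (a $2t$-pfaffian has degree $t$) and, since $0<t<m$, has maximal analytic spread. Under the stated characteristic hypothesis, Theorem~\ref{thm1_pfaffians} gives the primary decomposition of the \emph{ordinary} powers, and condition (2a) of the diagram criterion then yields $[(P_{2t}(Z)^s)^{\sat}]_r=[P_{2t}(Z)^s]_r$ for $r\ge st$. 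Hence Theorem~\ref{thm:j=te}(iii) applies and gives $j(P_{2t}(Z))=t\cdot e(A_t(Z))$, so dividing the formula of (i) by $t$ produces (iii).
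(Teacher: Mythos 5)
Your proposal is correct and follows essentially the same route as the paper's proof: the diagram criterion of Proposition~\ref{diagram_pfaffian}, the extraction of the leading form of $W_n$ on even shapes --- where the collapse $B_{2i-1}=B_{2i}$ kills the naive Vandermonde term and forces keeping the shifts, yielding $\prod_{i<j}(R_j-R_i)^4$ together with $(R_1\cdots R_m)^{2\delta(n)}$ when $n$ is odd --- followed by the Integration Lemma and symmetrization, and, for part (iii), Theorem~\ref{thm1_pfaffians} combined with Theorem~\ref{thm:j=te}(iii). The only (immaterial) deviation is your handling of odd $n$, where you apply Proposition~\ref{W-prop} with $2m$ columns so that the factor $\prod_k(B_k+k)$ supplies $(R_1\cdots R_m)^2$, whereas the paper uses $2m+1$ columns and obtains the same factor from the extra Vandermonde index; both give the identical leading form.
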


\begin{proof}
The proof is similar to the previous two theorems. In this case the diagrams only have rows with even size, then we compute using \linebreak ${W_n(0,r_1,0,r_2,0,\ldots,0,r_m)}$ if $n$ is even or $W_n(0,r_1,0,r_2,0,\ldots,0,r_m,0)$ if $n$ is odd. When $n=2m$ is even, in the notation of Proposition~\ref{W-prop}, write 
\[\label{Wpfaff}\tag{$\star$}
\begin{aligned}
W_{2m}&(0,r_1,0,r_2,0,\ldots,0,r_m)=\frac{\prod_{1 \ls i < j \ls 2m} \big( (B_j - B_i) - (j-i) \big)}{(n-1)! \cdots 1!}  \\
=\,&\frac{1}{(n-1)! \cdots 1!}\prod_{1 \ls i < j \ls m} \Big( \big( (B_{2j-1} - B_{2i}) - (2j-2i-1) \big) \\
& \big( (B_{2j} - B_{2i}) - (2j-2i) \big)  \big( (B_{2j-1} - B_{2i-1}) - (2j-2i) \big) \\
& \big( (B_{2j} - B_{2i-1}) - (2j-2i+1) \big)\Big) \prod_{1 \ls i \ls m} \big(B_{2i}-B_{2i-1} +1\big)\,. 
\end{aligned}
\]
Since $B_{2i}=B_{2i-1}$ for $1 \ls i \ls m$, the leading term in \eqref{Wpfaff} is
\[ \frac{\prod_{1\ls i < j \ls m} (B_{2j}-B_{2i})^4}{(n-1)! \cdots 1!}\,. \]
If $n=2m+1$ is odd, an extra factor of 
\[ \prod_{1 \ls i \ls m}  \big( (B_{2m+1} - B_{2i}) - (2m-2i+1) \big)  \big( (B_{2m+1} - B_{2i-1}) - (2j-2i+2) \big) \]
occurs in equation~\eqref{Wpfaff}, and since $B_{2m+1}=0$, the leading term in this case is
\[ \frac{(B_{2} B_{4} \cdots B_{2m})^2 \prod_{1\ls i < j \ls m} (B_{2j}-B_{2i})^4}{(n-1)! \cdots 1!}\,. \]
The rest of the proof follows as in Theorem \ref{thm_generic} using Proposition~\ref{diagram_pfaffian} and setting $z_i=B_{2i}$. For part iii), we use Theorem \ref{thm1_pfaffians} and Proposition \ref{diagram_pfaffian}.
\end{proof}
In \cite{CHST} it was shown that the $\e$-mutiplicity may be irrational. However, as the formulas above consist of the integral of a polynomial with rational coefficients over a polytope with rational vertices, we obtain the following corollary.

\begin{cor} For the ideals $I_t(X)$, $I_t(Y)$, $P_{2t}(Z)$ in Theorems~\ref{thm_generic},~\ref{thm_symmetric}, and~\ref{thm_pfaffian} above, the $\e$-multiplicity is a rational number.
\end{cor}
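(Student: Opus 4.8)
The plan is to read off, from Theorems~\ref{thm_generic}(ii), \ref{thm_symmetric}(ii), and~\ref{thm_pfaffian}(ii), that in each of the three cases the $\e$-multiplicity is a rational constant times an integral of a (piecewise) polynomial with integer coefficients over a full-dimensional region cut out by linear inequalities with integer coefficients. First I would check that the prefactor is rational: in every case $c$ is an explicit ratio of factorials, multiplied by an integer such as $mn$, $\binom{n+1}{2}$, or $\binom{n}{2}$, hence lies in $\QQ$. Next I would observe that the integration region, namely the subset of $[0,1]^m$ (or $[0,1]^n$) defined by $\max_i z_i + t - 1 \ls \sum z_i \ls t$, is a rational polytope: the constraint $\max_i z_i + t-1 \ls \sum z_i$ is equivalent to the finite system $\sum_{j\ne i} z_j \gs t-1$ for $i=1,\dots,m$, so the region is an intersection of finitely many halfspaces whose defining inequalities have integer coefficients.

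The second point is to reduce all three integrands to genuine polynomials with integer coefficients. In the generic and pfaffian cases the integrands $(z_1\cdots z_m)^{n-m}\prod_{i<j}(z_j-z_i)^2$ and $(z_1\cdots z_m)^{2\delta(n)}\prod_{i<j}(z_j-z_i)^4$ are already polynomials with integer coefficients. The only care is needed in the symmetric case, where $\prod_{i<j}|z_j-z_i|$ is merely piecewise polynomial; here I would subdivide the region along the finitely many hyperplanes $z_i=z_j$ into rational subpolytopes, on each of which the integrand agrees with a fixed integer-coefficient polynomial $\pm\prod_{i<j}(z_j-z_i)$. After this subdivision, in all three cases $\e(I)$ becomes a finite rational combination of integrals of monomials over rational polytopes.

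It then remains to establish the elementary fact that $\int_P g\,\ddd z\in\QQ$ whenever $g$ is a polynomial with rational coefficients and $P\subset\RR^d$ is a rational polytope. By linearity it suffices to treat a single monomial. I would triangulate $P$ into simplices whose vertices are among the (rational) vertices of $P$, so that each simplex has rational vertices. On a simplex $\sigma$ with vertices $v_0,\dots,v_d\in\QQ^d$, the affine map $u\mapsto v_0+\sum_{i=1}^d u_i(v_i-v_0)$ carries the standard $d$-simplex $\{u:u_i\gs 0,\ \sum u_i\ls 1\}$ onto $\sigma$; it has rational coefficients and Jacobian $\det(v_1-v_0,\dots,v_d-v_0)\in\QQ$, so pulling back the monomial yields a polynomial in $u$ with rational coefficients. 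Since $\int u_1^{a_1}\cdots u_d^{a_d}\,\ddd u=\frac{a_1!\cdots a_d!}{(a_1+\cdots+a_d+d)!}$ over the standard simplex is rational, summing the contributions of the finitely many simplices gives a rational value for $\int_P g\,\ddd z$.

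There is no serious obstacle here; the argument is entirely formal once the formulas of Section~\ref{section_main} are in hand. The only two points requiring mild attention are the piecewise-polynomial nature of the symmetric integrand, handled by subdividing along the diagonals $z_i=z_j$, and the assertion that a rational polytope admits a triangulation with rational vertices, which holds because any pulling (or placing) triangulation uses only the vertices of $P$. Combining the three steps shows that each of $\e(I_t(X))$, $\e(I_t(Y))$, and $\e(P_{2t}(Z))$ is a finite rational combination of rational numbers, hence rational.
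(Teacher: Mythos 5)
Your proof is correct and takes essentially the same route as the paper: the paper's entire argument is the observation that the formulas in Theorems~\ref{thm_generic}(ii), \ref{thm_symmetric}(ii), and~\ref{thm_pfaffian}(ii) express the $\e$-multiplicity as a rational multiple of the integral of a rational-coefficient (piecewise) polynomial over a rational polytope, which is a rational number. The details you supply --- rewriting $\max_i z_i + t - 1 \ls \sum z_i$ as the inequalities $\sum_{j\neq i} z_j \gs t-1$, subdividing along $z_i = z_j$ to handle the absolute values in the symmetric case, and the triangulation plus Dirichlet-integral computation --- simply make explicit what the paper treats as standard.
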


\begin{remark}\label{analytic_spread}
In the cases not included in Theorems \ref{thm_generic}, \ref{thm_symmetric} and \ref{thm_pfaffian} the analytic spread is not maximal, so the $j$- and $\e$-multiplicity are $0$ in these cases. More precisely:
\begin{itemize}
\item[(i)] $A_m(X)$ is the coordinate ring of the Grassmannian of $m$-dimensional subspaces in an $n$-dimensional $K$-vector space. In particular, we have that $\ell(I_m(X))=m(n-m)+1$. The formula for $e(A_m(X))$ is classical and follows by the ``postulation formula" proved by Littlewood and then by Hodge in \cite{Ho}. 
\item[(ii)] $A_n(Y)$ is a polynomial ring in one variable;
\item[(iii)] $A_m(Z)$ is a polynomial ring in one variable if $n=2m$, while it is a polynomial ring in $n$ variables if $n=2m+1$ (this follows, for example, by a stronger result of Huneke in \cite{Hun}, where he shows that these ideals are of linear type).
\end{itemize}
\end{remark}

\begin{example}\label{values}
In Table 1, we present some values of $j(I_t(X))$ and $\e(I_t(X))$ for small $t$, $m$, and $n$ using Theorem~\ref{thm_generic}. To compute the multiplicities of larger determinantal varieties via Theorems \ref{thm_generic}, \ref{thm_symmetric}, and \ref{thm_pfaffian}, there are very good specific programs to evaluate integrals of polynomial functions over rational polytopes: LattE \cite{DK} (in particular, the new version LattE integrale) or Normaliz \cite{BIS} (via the package NmzIntegrate). The algorithms used in these programs are explained, respectively, in the papers \cite{BBDKV} and \cite{BS}.
\begin{table}[ht]
\caption{Some Values of $j(I_t(X))$ and $\e(I_t(X))$}
\centering
\begin{tabular}{c c c c c}
\hline
\hline
$t$ & $m$ & $n$ & $j(I_t(X))$ & $\e(I_t(X))$ \\
[0.5ex]
\hline

2 & 3 & 3 & 2 & $1/2$ \\
2 & 3 & 4 & 64 & $341/2^4$\\
2 & 3 & 5 & 1192 &$ 62289/2^7$\\
2 & 3 & 6 & 17236 & $4195559/2^9$\\
2 & 4 & 4 & 4768 & $214865/2^{5}3$ \\
2 & 4 & 5 & 178368 & $ 1610240575/2^{6}3^{5}$\\
2 & 4 & 6 & 4888048 & $33029597513545/2^{9}3^{9}$\\
3 & 4 & 4 & 3 & $1/3$\\
3 & 4 & 5 & 2853 & $ 96631/3^5$\\
3 & 4 & 6 & 368643747 & $4134333611/3^9$\\
4 & 5 & 5 & 4 & $1/4$\\
4 & 5 & 6 & 130496 & $ 40162739/2^{12}$\\
\hline
\end{tabular}
\label{table:genericex}
\end{table}
\end{example}

\section{The Integral}\label{sec:integral}

In this section, we briefly discuss a few aspects of the integrals appearing in Section~\ref{section_main} : their meaning in random matrix theory, methods of evaluating them, and a closed formula in the case $t=m-1$ of Theorem~\ref{thm_generic}. 

To the first end, we apply a well-known transformation of Hua \cite[Section~3.3]{Hu}:
Let 
\[ \mcal{D}= \{ W \, | \, W\, \text{is $m \times m$ Hermitian, $W$ and $1-W$ are positive definite} \} \,.
\]
Then
\begin{equation*}\label{distributiontrace}
\int\limits_{\substack{[0,1]^m \\  \sum{z_i} \ls t}}(z_1\cdots z_{m})^{n-m}\prod_{i<j}(z_j-z_i)^2 \ddd{z} = \frac{(m-1)! \cdots 1!}{(2\pi)^{\binom{m}{2}}}  \!\!\!\!\!\!  \int\limits_{\substack{W\in \mcal{D} \\ \tr(W)\ls t}} \!\!\!\!\!\!\big(\det(W)\big)^{n-m} \ddd{w}
\end{equation*}
where  $\ddd{w}$ is the product of differentials over all real and imaginary components of entries of $W$. Thus, if $W$ is a random Hermitian matrix with $W$ and $1-W$ positive definite, with distribution proportional to $\big(\det(W)\big)^{n-m}$, the integral above is proportional to the probability that $\tr(W)\ls t$.

These and other related integrals have received considerable interest recently, see \cite{FW} for a survey of some of this activity. No closed forms for the integrals above are known in general. However, the case $t=1$ was settled in somewhat greater generality than the above by unpublished work of Selberg, and by Askey and Richards~\cite{AR}. For $t=1$, we have $I_1(X)$ is the homogeneous maximal ideal of the polynomial ring $K[X]$, so that
\[j(I_1(X))=\e(I_1(X))=e(A_1(X))=1\,. \]
We obtain as a corollary of Theorem~\ref{thm_generic} the following special case of the main theorem of \cite{Sel} (Selberg's Integral):
\begin{cor}
\begin{align*}
\int\limits_{\substack{z_1\dots,z_m \gs 0 \\ \sum{z_i}\ls 1}}\!\!\!\!\!\!\!\!\!(z_1\cdots z_{m})^{n-m}\prod_{i<j}(z_j-z_i)^2 \ddd{z} &= \frac{(n-1)!\cdots (n-m)!\cdot m!\cdots 1!}{(nm)!}\,.
\end{align*}
\end{cor}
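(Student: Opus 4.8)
The plan is to deduce the stated equality from Theorem~\ref{thm_generic}(i) in the degenerate case $t=1$, where the left-hand side is known independently, and then to convert the resulting slice integral into the solid-simplex integral of the statement by a homogeneity argument. First I would record that $I_1(X)$ is the homogeneous maximal ideal $\mm$ of $K[X]$, so $j(I_1(X)) = e(K[X]) = 1$. Setting $t=1$ in the formula of Theorem~\ref{thm_generic}(i) then gives $1 = c\int_{S} g \ddd{\nu}$, where $g(z) = (z_1\cdots z_m)^{n-m}\prod_{i<j}(z_j-z_i)^2$ and $S = \{z\in[0,1]^m : \sum z_i = 1\}$; hence the slice integral equals $1/c$. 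It remains only to relate this integral over the hyperplane slice $S$ (with measure $\nu$) to the integral over the solid region $\{z_i\gs 0,\ \sum z_i\ls 1\}$ appearing in the corollary (with full Lebesgue measure $\ddd{z}$).

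The bridge is homogeneity. The integrand $g$ is homogeneous of degree $D = m(n-m) + 2\binom{m}{2} = m(n-1)$. I would slice the solid region by the value $u = \sum_i z_i\in[0,1]$. Taking the same projection $\pi$ onto $m-1$ coordinates used to normalize $\nu$, the change of variables $(z_1,\dots,z_m)\mapsto(z_1,\dots,z_{m-1},u)$ has unit Jacobian, so $\ddd{z}$ factors as $\ddd{\nu_u}\,\ddd{u}$, where $\nu_u$ is the measure on the slice $\{\sum z_i = u\}$ normalized exactly as $\nu$. Scaling $z = uw$ with $w\in S$ then produces a factor $u^{m-1}$ from the $(m-1)$-dimensional slice measure and a factor $u^{D}$ from $g$, giving
\[
\int_{\{\sum z_i = u\}} g \ddd{\nu_u} = u^{\,D+m-1}\int_{S} g \ddd{\nu}\,.
\]

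Integrating over $u\in[0,1]$ yields $\int_{\{z_i\gs 0,\ \sum z_i\ls 1\}} g \ddd{z} = \frac{1}{D+m}\int_{S} g \ddd{\nu}$, and since $D+m = m(n-1)+m = mn$, this equals $\frac{1}{mn}\cdot\frac{1}{c}$. Inserting the explicit value of $c$ and using $mn\cdot(mn-1)! = (mn)!$ collapses the expression to $\frac{(n-1)!\cdots(n-m)!\cdot m!\cdots 1!}{(mn)!}$, as claimed.

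The only delicate point is the measure-theoretic bookkeeping: one must verify that the slice measure produced by the coarea decomposition agrees with the normalization of $\nu$ (so that no stray constant is introduced) and that the scaling Jacobian on the $(m-1)$-dimensional slice is precisely $u^{m-1}$. Once the degree $D = m(n-1)$ is computed correctly, everything telescopes cleanly, since $D+m = mn$ matches exactly the factorial in the denominator; all of the genuinely combinatorial work has already been absorbed into Theorem~\ref{thm_generic}.
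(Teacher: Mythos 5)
Your proof is correct, but it takes a genuinely different route from the paper's. The paper deduces the corollary from part (ii) of Theorem~\ref{thm_generic} rather than part (i): at $t=1$ the constraint $\max_i\{z_i\}+t-1\leqslant \sum z_i$ is vacuous, so the region in the $\epsilon$-multiplicity formula is already the solid simplex $\{z_i\geqslant 0,\ \sum z_i\leqslant 1\}$ with Lebesgue measure, and the identity follows in one line from $\epsilon(I_1(X))=1$: the integral equals $1/(cmn)=\frac{(n-1)!\cdots(n-m)!\,m!\cdots 1!}{(mn)!}$. You instead start from part (i), i.e.\ from $j(I_1(X))=1$, which yields the slice integral $\int_S g\,\mathrm{d}\nu = 1/c$ over the hyperplane $\sum z_i=1$, and then convert it to the solid-simplex integral by a homogeneity/coarea argument. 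Your bookkeeping there is right: $g$ is homogeneous of degree $D=m(n-1)$; the factorization $\mathrm{d}z = \mathrm{d}\nu_u\,\mathrm{d}u$ is correct for the paper's normalization of $\nu$ (pushforward under coordinate projection is Lebesgue measure); the slice scaling contributes $u^{m-1}$; and $D+m=mn$ produces exactly the factor $1/(mn)$ needed to pass from $(mn-1)!$ to $(mn)!$. What the paper's route buys is brevity --- no measure-theoretic conversion at all. What your route buys is that it makes explicit the scaling relation between the slice integral of (i) and the cone integral of (ii); in effect your computation verifies that formulas (i) and (ii) are consistent at $t=1$, as they must be, since $j$ and $\epsilon$ coincide for the $\mm$-primary ideal $I_1(X)$ (both equal its Hilbert--Samuel multiplicity).
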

Other special cases of Selberg's integral are obtained from the case $t=1$ in Theorems~\ref{thm_symmetric}, and~\ref{thm_pfaffian}.

In \cite{BBDKV}, Baldoni et al. prove a nice method to integrate a product of linear forms over a simplex. The integrands in Theorems \ref{thm_generic},~\ref{thm_symmetric}, and~\ref{thm_pfaffian} are products of linear forms, and the regions of integration in the integral formulas for the $j$-multiplicity and for the degree of the fiber cone in the theorems above are well-studied hypersimplices, with triangulations given by \cite{Sta2} or \cite{Stuu}. This provides a relatively quick method for evaluating these integrals. 

We illustrate this in more detail in the special case of $j\big(I(X)_{m-1}\big)$ --- equivalently, by \ref{thm:j=te}~(iii), of $t \cdot e\big(\F(I(X)_{m-1})\big)$ --- where the region of integration is already the simplex $\Delta\in\RR^m$ with vertices $(1,1,\ldots ,1,0,1,\ldots ,1)$. Applying \cite[Corollary~11]{BBDKV}, since ${\vol(\Delta)=\frac{1}{(m-1)!}}$, we get

\begin{align*}
\sum\limits_{M \in \NN^{\binom{m+1}{2}}} \!\! \frac{ (\sum M + m-1)!\prod_{i\ls j}t_{ij}^{M_{ij}} } {\prod_{i \ls j} (M_{ij}!)}
\int_{\Delta} {x_1^{M_{11}}\cdots x_m^{M_{mm}} \prod_{i<j} (x_i-x_j)^{M_{ij}} \ddd{\nu}}\\
= \frac{1}{\prod\limits_{k=1}^m \big( 1- \sum\limits_{h\neq k} t_{hh} + \sum\limits_{j>k} t_{kj} - \sum\limits_{i<k} t_{ik} \big)}
\end{align*}
with $\nu$ as in the proof of Theorem~\ref{thm_generic}, so that one may calculate the $j$-multiplicity by expanding the series on the right to order $m(n-1)$, retrieving the coefficient of the term with exponents $M_{ij}=2$ for $i<j$ and $M_{ij}=n-m$ for $i=j$, and multiplying by the appropriate constant, namely:
\[\displaystyle \frac{(m-1)\cdot 2^{\binom{m}{2}}}{\prod_{i=1}^{m-1}(n-i)^i\cdot m!(m-1)!\cdots 2!}\,.\]
In order to retrieve these coefficients define, for all $k=1,\ldots ,m$,
\[\ell_k=1- \sum\limits_{h\neq k} t_{hh} + \sum\limits_{j>k} t_{kj} - \sum\limits_{i<k} t_{ik}\,. \]
Denote the inverse of $\ell_k$ by
\[G_k=\sum \lambda_k((a_{ii},a_{ik})_{i\neq k})\prod_{i\neq k}t_{ii}^{a_{ii}}t_{ik}^{a_{ik}}\,.\]
It is easy to see that the coefficients of $G_k$ are:
\[\lambda_k((a_{ii},a_{ik})_{i\neq k}) = 
\displaystyle (-1)^{\sum_{i<k}a_{ik}}\frac{(\sum_{i\neq k}(a_{ii}+a_{ik}))!}{\prod_{i\neq k}(a_{ii}!a_{ik}!)}\,.
\] 
We want to compute $G=\prod_{k=1}^m G_k=\sum \lambda((a_{ij})_{i\ls j})\prod_{i\ls j}t_{ij}^{a_{ij}}$. To this goal we have to identify all $m$ terms (one for each $G_k$) whose product is $\prod_{i\ls j}t_{ij}^{a_{ij}}$. This is the set $A((a_{ij})_{i\ls j})$ consisting of elements of the form $(a_{ii}^k,a_{ik}^k)_{\substack{k=1,\ldots ,m \\ i\neq k}}$, where the $a_{ij}^k$ are natural numbers satisfying:
\begin{eqnarray*}
\sum_{k=1}^ma_{ii}^k=a_{ii} & \forall \ i=1,\ldots ,m\,,\\ 
\sum_{k=1}^ma_{pq}^k=a_{pq} & \forall \ 1\ls p<q\ls m\,,
\end{eqnarray*}
where we have put $a_{ij}^k=0$ if $i=j=k$ or $i\neq j\neq k\neq i$, and $a_{ij}^k=a_{ji}^k$ in the remaining cases. With this notation we have
\[\displaystyle  \lambda((a_{ij})_{i\ls j})=\sum\limits_{(a_{ii}^k,a_{ik}^k)_{i\neq k}\in A((a_{ij})_{i\ls  j})}\prod_{k=1}^m(-1)^{\sum_{i<k}a_{ik}^k}\frac{(\sum_{i\neq k}(a_{ii}^k+a_{ik}^k))!}{\prod_{i\neq k}(a_{ii}^k!a_{ik}^k!)} \,. \]
Recall that the coefficient that is relevant to $j(I_{m-1}(X))$ is $\lambda((a_{ij})_{i\ls j})$ with ${a_{ii}=n-m}$ and $a_{ij}=2$ for $i<j$. For such $(a_{ij})_{ij}$ the set  $A((a_{ij})_{i\ls j})$ can be identified with the set $A(m,n)$ of all pairs of $m\times m$ matrices $(a_{ij},b_{ij})_{ij}$ with natural entries such that
\begin{eqnarray*}
a_{ij}=b_{ij}=0 & \mbox{if } i=j \\
\sum_{j=1}^ma_{ij}=n-m & \forall \ i=1,\ldots ,m\,,\\ 
b_{pq}+b_{qp}=2 & \forall \ 1\ls p<q \ls m\,.
\end{eqnarray*}
Finally we get the following:
\begin{prop}
The $j$-multiplicity of $I_{m-1}(X)$ is
{\small \begin{equation*}
\frac{(m-1)\cdot 2^{\binom{m}{2}}}{\prod_{i=1}^{m-1}(n-i)^i\cdot m!(m-1)!\cdots 1!} \left( 
\nonumber \sum\limits_{(a_{ij},b_{ij})_{ij}\in A(m,n)}\prod_{j=1}^m(-1)^{\sum_{i<j}b_{ij}}\frac{(\sum_{i=1}^m(a_{ij}+b_{ij}))!}{\prod_{i=1}^m(a_{ij}!b_{ij}!)}\right)\!\,.
\end{equation*}}
\end{prop}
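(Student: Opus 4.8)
The plan is to specialize Theorem~\ref{thm_generic}~(i) to $t=m-1$ and then evaluate the resulting integral by the generating-function method of Baldoni et al.\ \cite{BBDKV}. First I would observe that when $t=m-1$ the integration region $\{z\in[0,1]^m:\sum z_i=m-1\}$ is already the simplex $\Delta\subset\RR^m$ with the $m$ vertices $(1,\ldots,1,0,1,\ldots,1)$, of volume $1/(m-1)!$; this is exactly the situation to which \cite[Corollary~11]{BBDKV} applies, and it produces the generating-function identity displayed above, whose left-hand side encodes all the integrals $\int_\Delta x_1^{M_{11}}\cdots x_m^{M_{mm}}\prod_{i<j}(x_i-x_j)^{M_{ij}}\ddd{\nu}$ and whose right-hand side is the rational function $\prod_{k=1}^m\ell_k^{-1}$.

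Next I would extract the one integral $J$ relevant to us, namely the one with $M_{ii}=n-m$ and $M_{ij}=2$ for $i<j$, which is precisely the integrand of Theorem~\ref{thm_generic}~(i) at $t=m-1$. For this choice one computes $\sum M+m-1=mn-1$ and $\prod_{i\ls j}(M_{ij}!)=((n-m)!)^m\,2^{\binom{m}{2}}$, so comparing the coefficient of the monomial $\prod t_{ij}^{M_{ij}}$ on both sides of the identity yields
\[J=\frac{((n-m)!)^m\,2^{\binom{m}{2}}}{(mn-1)!}\,\lambda\big((a_{ij})_{i\ls j}\big),\]
where $\lambda\big((a_{ij})_{i\ls j}\big)$ is the coefficient of that monomial in $G=\prod_k\ell_k^{-1}$ for $a_{ii}=n-m$ and $a_{ij}=2$.

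The combinatorial core, and the step I expect to be the main obstacle, is the evaluation of this coefficient. I would expand each factor $G_k=\ell_k^{-1}$ as a multinomial (geometric) series, reading off the coefficients $\lambda_k$ recorded above, with the signs coming from the variables that enter $\ell_k$ with coefficient $-1$. The delicate bookkeeping is the incidence pattern: each diagonal variable $t_{ii}$ appears in $\ell_k$ for every $k\ne i$, while each off-diagonal variable $t_{ij}$ with $i<j$ appears only in $\ell_i$ and $\ell_j$ and with opposite signs there. Carrying this through when forming $G=\prod_kG_k$ shows that the index set $A\big((a_{ij})_{i\ls j}\big)$ for our monomial is in bijection with the set $A(m,n)$: the diagonal constraint $\sum_k a_{ii}^k=a_{ii}=n-m$ becomes $\sum_j a_{ij}=n-m$, the off-diagonal constraint $a_{pq}^p+a_{pq}^q=a_{pq}=2$ becomes $b_{pq}+b_{qp}=2$, and the sign $(-1)^{\sum_{i<k}a_{ik}^k}$ becomes $(-1)^{\sum_{i<j}b_{ij}}$. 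This identifies $\lambda\big((a_{ij})_{i\ls j}\big)$ with the bracketed double sum in the statement.

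Finally I would assemble the constant. Substituting $J$ into $j(I_{m-1}(X))=c\,(m-1)\,J$ from Theorem~\ref{thm_generic}~(i) and cancelling the factor $(mn-1)!$ against the numerator of $c$ leaves the prefactor $\frac{(m-1)\,2^{\binom{m}{2}}\,((n-m)!)^m}{(n-1)!(n-2)!\cdots(n-m)!\cdot m!(m-1)!\cdots 1!}$. The elementary identity $\prod_{i=1}^m\frac{(n-i)!}{(n-m)!}=\prod_{i=1}^{m-1}(n-i)^i$, proved by counting how many of the ratios $\frac{(n-i)!}{(n-m)!}$ contain each factor $(n-j)$, rewrites $((n-m)!)^m/\big((n-1)!\cdots(n-m)!\big)$ as $\prod_{i=1}^{m-1}(n-i)^{-i}$, so the prefactor becomes exactly $\frac{(m-1)\,2^{\binom{m}{2}}}{\prod_{i=1}^{m-1}(n-i)^i\cdot m!(m-1)!\cdots 1!}$, completing the identification with the claimed formula.
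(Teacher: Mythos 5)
Your proposal is correct and follows essentially the same route as the paper: specialize Theorem~\ref{thm_generic}~(i) to $t=m-1$, apply \cite[Corollary~11]{BBDKV} over the simplex $\Delta$, compare coefficients at $M_{ii}=n-m$, $M_{ij}=2$, and identify the resulting coefficient of $\prod_k \ell_k^{-1}$ with the signed sum over $A(m,n)$. Your explicit assembly of the constant via the identity $\prod_{i=1}^{m}\frac{(n-i)!}{(n-m)!}=\prod_{i=1}^{m-1}(n-i)^i$ just makes precise a step the paper leaves implicit.
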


\section*{Acknowledgements}

The authors thank the Mathematical Sciences Research Institute, Berkeley, CA, where the discussions originating this work started. Also, thanks to Aldo Conca for enlightening discussions about Subsection \ref{subrat}, to Bernd Ulrich for guiding us to the proof of the second point of Theorem \ref{thm:j=te}, and to Donald Richards for explaining the connections with random matrix theory. The authors also wish to thank Winfried Bruns for his help in computing the values in the table of Example \ref{values} using the software NmzIntegrate. The first two authors also thank their Ph.D. advisors, Anurag Singh and Bernd Ulrich, respectively.
The authors are also grateful to the referee for her or his helpful corrections.

\end{document}